\newtheorem{theorem}{Theorem}[section]
\newtheorem{corollary}[theorem]{Corollary}
\newtheorem{lemma}[theorem]{Lemma}
\newtheorem{proposition}[theorem]{Proposition}
\theoremstyle{theorem}
\newenvironment{customthm}[1]
  {\innercustomthm}
  {\endinnercustomthm}
\theoremstyle{definition}
\theoremstyle{remark}
\newtheorem{remark}[theorem]{Remark}
\numberwithin{equation}{section}
\def\no{\nonumber}
\newcommand{\p}{\partial}
\newcommand{\fder}[2]{\frac{\partial #1}{\partial #2}}
\begin{document}
\title{Some results on higher eigenvalue optimization}
\author{Ailana Fraser}
\address{Department of Mathematics \\
                 University of British Columbia \\
                 Vancouver, BC V6T 1Z2}
\email{afraser@math.ubc.ca}
\author{Richard Schoen}
\address{Department of Mathematics \\
                 University of California \\
                 Irvine, CA 92617}
\email{rschoen@math.uci.edu}
\thanks{2010 {\em Mathematics Subject Classification.} 35P15, 53A10. \\
A. Fraser was partially supported by  the 
Natural Sciences and Engineering Research Council of Canada and R. Schoen
was partially supported by NSF grant DMS-1710565. Part of this work was done while the authors were visiting the Institute for Advanced Study, with funding from NSF grant DMS-1638352 and the James D. Wolfensohn Fund, and the authors gratefully acknowledge the support of the IAS}

\begin{abstract}
In this paper we obtain several results concerning the optimization of higher Steklov eigenvalues both in two and higher dimensional cases. We first show that the normalized (by boundary length) $k$-th Steklov eigenvalue on the disk is not maximized for a smooth metric on the disk for $k\geq 3$. For $k=1$ the classical result of \cite{W} shows that $\sigma_1$ is maximized by the standard 
metric on the round disk. For $k=2$ it was shown \cite{GP1} that $\sigma_2$ is not maximized
for a smooth metric. We also prove a local rigidity result for the critical catenoid and the critical
M\"obius band as free boundary minimal surfaces in a ball under $C^2$ deformations. We next
show that the first $k$ Steklov eigenvalues are continuous under certain degenerations of 
Riemannian manifolds in any dimension. Finally we show that for $k\geq 2$ the supremum of
the $k$-th Steklov eigenvalue on the annulus over all metrics is strictly larger that that over $S^1$-invariant 
metrics. We prove this same result for metrics on the M\"obius band.
\end{abstract}

\maketitle

\section{Introduction}
In this paper we obtain several results concerning the optimization of higher Steklov eigenvalues both in two and higher dimensional cases. Recall that for a compact Riemannian manifold with non-empty boundary we have the Steklov spectrum which consists of the eigenvalues of the Dirichlet to Neumann map. We denote these eigenvalues $\sigma_0=0<\sigma_1\leq\sigma_2\ldots$ and they form an infinite discrete sequence tending to infinity. A Steklov eigenfunction $u$ with eigenvalue $\sigma$ is then a non-zero solution of $\Delta u=0$ in $M$ with $\frac{\partial u}{\partial \nu}=\sigma u$ on $\partial M$ where $\nu$ denotes the outward unit normal to $\partial M$.

A classical result of J. Hersch, L. Payne, and M. Schiffer \cite{HPS} from $1975$ gives the upper bound $\sigma_k\cdot L(\partial D)\leq 2\pi k$ for all  metrics on the disk $D$ and for all $k\geq 1$. In $2010$ it was shown by A. Girouard and I. Polterovich \cite{GP1} that this bound is sharp for all $k$ but is {\it not} attained by a smooth metric on the disk for $k=2$. The bound and the result that it is attained by the standard round disk for $k=1$ is a classical result of R. Weinstock \cite{W}. In Section 2 of this paper we extend the result of \cite{GP1} to show that the bound is not attained for a smooth metric for all $k\geq 2$. The proof is based on our earlier work \cite{FS3} on uniqueness of free boundary minimal disks in higher dimensions together with the characterization of maximizing metrics given in \cite{FS2}.

In Section 3 we prove a local uniqueness theorem among free boundary minimal surfaces for the critical catenoid in $\mathbb{B}^n$ and for the critical M\"obius band in $\mathbb{B}^n$. It is not known whether there are other embedded free boundary minimal annuli besides the critical catenoid in $\mathbb{B}^n$, but we are able to show that there are none which lie in a $C^2$ neighborhood of the critical catenoid except rotates of the critical catenoid. We prove an analogous result for the critical M\"obius band. These results are consequences of the work of \cite{FS4} where it is shown that the critical catenoid is the only free boundary minimal annulus with the coordinate functions being first Steklov eigenfunctions. It is also shown in \cite{FS4} that the critical M\"obius band is the only free boundary minimal M\"obius band with coordinate functions being first Steklov eigenfunctions.

In Section 4 we consider the question of the degenerations of Riemannian manifolds under which the first $k$ Steklov eigenvalues are continuous. This question is important when one attempts to construct metrics which optimize an eigenvalue. We prove the following result which concerns the case in which a manifold degenerates into a disjoint union of manifolds.

\begin{theorem} \label{theorem:gluing-boundary}
Let $M_1, \ldots, M_s$ be compact n-dimensional Riemannian manifolds with nonempty boundary. Given $\epsilon >0$, there exists a Riemannian manifold $M_\epsilon$, obtained by appropriately gluing $M_1, \ldots, M_s$ together along their boundaries, such that 
\begin{align*}
       \lim_{\epsilon \rightarrow 0} | \partial M_\epsilon| &=|\partial (M_1 \sqcup \cdots \sqcup M_s)| 
       \quad \mbox{ and} \\
       \lim_{\epsilon \rightarrow 0} \sigma_k (M_\epsilon) 
       &=\sigma_k(M_1 \sqcup \cdots \sqcup M_s)
\end{align*}
for $k=0, \, 1,\, 2, \ldots$.
\end{theorem}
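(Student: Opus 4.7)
The plan is to construct $M_\epsilon$ by excising small half-balls near interior points of the boundaries $\partial M_i$ and identifying the newly exposed hemispheres, and then to deduce Steklov spectrum convergence from the variational characterization combined with a concentration-compactness argument on the shrinking gluing region. For the construction, choose a point $p_i$ in the interior of $\partial M_i$ for each $i$; working in boundary normal coordinates, remove an open geodesic half-ball $B_i^\epsilon$ of radius $\epsilon$ centered at $p_i$, so that the newly exposed portion of $\partial(M_i \setminus B_i^\epsilon)$ is the union of an interior hemisphere $H_i^\epsilon$ and a flat disk $D_i^\epsilon \subset \partial M_i$ of $(n-1)$-volume $O(\epsilon^{n-1})$. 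Form $M_\epsilon$ by identifying the $H_i^\epsilon$ in pairs via isometries according to any spanning tree of gluings among $M_1, \ldots, M_s$, and smooth the metric in a small collar about each seam. Since only the disks $D_i^\epsilon$ are lost from the boundary, $|\partial M_\epsilon| = \sum_i |\partial M_i| - O(\epsilon^{n-1})$, which gives the first limit.

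For the upper bound $\limsup_{\epsilon \to 0} \sigma_k(M_\epsilon) \leq \sigma_k(M_1 \sqcup \cdots \sqcup M_s)$, I would use the min-max characterization
\[
\sigma_k(M) = \inf_{V \in \mathcal{V}_{k+1}} \sup_{0 \neq u \in V} \frac{\int_M |\nabla u|^2}{\int_{\partial M} u^2},
\]
where $\mathcal{V}_{k+1}$ denotes the $(k+1)$-dimensional subspaces of $H^1(M)$. Take the first $k+1$ Steklov eigenfunctions $\phi_0, \ldots, \phi_k$ on $M_1 \sqcup \cdots \sqcup M_s$ and multiply each by a cutoff $\eta_\epsilon$ on $M_\epsilon$ which equals $1$ outside a neighborhood of the gluing locus and vanishes in a smaller neighborhood. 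The Dirichlet energy contributed by $\eta_\epsilon$ is controlled by the harmonic capacity of a small ball, which is $O(\epsilon^{n-2})$ for $n \geq 3$ and $O(1/|\log \epsilon|)$ for $n = 2$, while the boundary $L^2$ perturbation is $O(\epsilon^{n-1})$. Both are $o(1)$, so the span of $\{\eta_\epsilon \phi_j\}_{j=0}^k$ witnesses the desired upper bound.

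The lower bound $\liminf_{\epsilon \to 0} \sigma_k(M_\epsilon) \geq \sigma_k(M_1 \sqcup \cdots \sqcup M_s)$ is where the main difficulty lies. Let $u_0^\epsilon, \ldots, u_k^\epsilon$ be the first $k+1$ Steklov eigenfunctions on $M_\epsilon$, orthonormalized so that $\int_{\partial M_\epsilon} u_j^\epsilon u_l^\epsilon = \delta_{jl}$. The upper bound gives a uniform Dirichlet energy bound, hence weak $H^1$ compactness on compact subsets of $M_i \setminus \{p_i\}$; passing to a subsequence I obtain weak limits $u_j$ on $M_1 \sqcup \cdots \sqcup M_s$. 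The crucial step, and the one I expect to be the main obstacle, is to rule out loss of $L^2$ mass on the boundary near the seams: a rescaled trace inequality on the annular region $\Sigma_\epsilon \subset \partial M_\epsilon$ of width $\epsilon$ around each seam, together with the uniform $H^1$ bound, must yield $\int_{\Sigma_\epsilon} (u_j^\epsilon)^2 = o(1)$. This preserves orthonormality in the limit so that $\dim \mathrm{span}\{u_j\} = k+1$, and then weak lower semicontinuity of Dirichlet energy combined with strong $L^2$ convergence on each $\partial M_i$ provides the required reverse bound on the top Rayleigh quotient of the limit subspace, yielding $\sigma_k(M_1 \sqcup \cdots \sqcup M_s) \leq \liminf_{\epsilon \to 0} \sigma_k(M_\epsilon)$.
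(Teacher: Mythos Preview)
Your overall strategy---upper bound by transplanting eigenfunctions of the disjoint union via capacity-type cutoffs, lower bound by extracting weak limits of $M_\epsilon$-eigenfunctions and showing they remain $L^2(\partial)$-orthonormal on the limit---is exactly the paper's. But your construction differs: you excise half-balls and identify the exposed interior hemispheres, whereas the paper keeps each $M_i$ intact and attaches an external tube $T_\rho$ to the boundaries. For $n\ge 3$ the paper takes $T_\rho$ to be a solid piece of a catenoid, chosen precisely so that its lateral boundary is area-minimizing; for $n=2$ it is a square. This is not cosmetic: the minimizing property is what drives the volume comparison of Lemma~\ref{lemma:volume-comparison}, which in turn yields the neck Poincar\'e inequality (Lemma~\ref{lemma:poincare}) and the decisive no-concentration estimate (Proposition~\ref{proposition:neck-estimate}) ruling out boundary $L^2$ mass on $\partial T_\rho$.

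Your hemisphere gluing creates no tube boundary, which eliminates that concentration mode and in that sense may well be simpler. But the step you yourself flag as the main obstacle is not resolved in your sketch. You assert that ``a rescaled trace inequality \ldots together with the uniform $H^1$ bound'' forces $\int_{\Sigma_\epsilon}(u_j^\epsilon)^2 = o(1)$, yet neither ingredient is in hand. The Dirichlet energy bound alone does not give a uniform $H^1$ bound: you still need $\int (u_j^\epsilon)^2$ controlled on a neighborhood of $p_i$, and this requires a Poincar\'e-type inequality (boundary $L^2$ plus energy controls interior $L^2$) with constant independent of $\epsilon$---the role played in the paper by Lemma~\ref{lemma:L^2-bound} combined with the cutoff estimates of Lemmas~\ref{logcutoff} and \ref{lemma:halfball-cutoff-estimate}. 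Likewise, the trace inequality you invoke must be established uniformly on the varying half-annulus $B_{r_0}^+\setminus B_\epsilon^+$ after your smoothing, not merely cited. These preliminary estimates are where the actual content of the theorem lies, and they occupy most of Section~\ref{section:gluing}. You also pass over the removable-singularity argument needed to show the weak limits extend across each $p_i$ to genuine Steklov eigenfunctions on the full $M_i$, which the paper carries out explicitly with a logarithmic cutoff and dominated convergence.
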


The results of \cite{GP1} may be considered as a very special case (gluing copies of the unit disk), and as is discussed there, the shape of the neck which is used in the gluing is a delicate consideration (see also \cite{K2}). This is slightly different in the cases $n=2$ and $n\geq 3$. In the case $n=2$ we use essentially a rectangular neck of approximately equal side and vanishingly small side lengths, while for $n\geq 3$ we use a portion of a catenoidal hypersurface in order to avoid concentration of eigenfunctions on the neck region. There is a substantial amount of delicate analysis involved in giving a rigorous proof of this result. 

We also consider the result of interior gluings such as connected sums with small necks. In this case we prove under quite weak conditions on the neck region the result.

\begin{theorem} \label{theorem:gluing-interior}
Let $M_1, \ldots, M_s$ be compact n-dimensional Riemannian manifolds with nonempty boundary. Given $\epsilon >0$ there exists a Riemannian manifold $M_\epsilon$, obtained by appropriately gluing $M_1, \ldots, M_s$ together along there interiors, such that 
$\partial M_\epsilon=\partial (M_1 \sqcup \ldots \sqcup M_s)$ and
\[
       \lim_{ \epsilon \rightarrow 0} \sigma_k (M_\epsilon) =  \sigma_k(M_1 \sqcup \cdots \sqcup M_s)
\]
for $k=0, \, 1,\, 2, \ldots$.
\end{theorem}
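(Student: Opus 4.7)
The approach is to establish matching upper and lower bounds on $\sigma_k(M_\epsilon)$ via the standard min-max characterization of Steklov eigenvalues,
\[
\sigma_k(N) \,=\, \min_E \max_{0 \neq u \in E} \frac{\int_N |\nabla u|^2\,dV}{\int_{\partial N} u^2\,dS},
\]
the minimum taken over $(k+1)$-dimensional subspaces $E \subset H^1(N)$ with trivial intersection with $H^1_0(N)$; we then exploit the fact that by construction $\partial M_\epsilon = \partial(M_1\sqcup\cdots\sqcup M_s)$, so the boundary integrals are literally unchanged. I will construct $M_\epsilon$ by removing a small geodesic ball $B_\epsilon^i$ from an interior point of each $M_i$ and connecting the resulting holes by thin necks $N_\epsilon$ (short cylindrical tubes, or small catenoidal bridges), with radii and lengths chosen so that the ``bad'' region $N_\epsilon \cup \bigsqcup_i B_\epsilon^i$ has vanishing relative $H^1$-capacity in $M_\epsilon$ as $\epsilon \to 0$.

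For the upper bound $\limsup_{\epsilon\to 0}\sigma_k(M_\epsilon) \leq \sigma_k(M_1 \sqcup \cdots \sqcup M_s)$, I take $L^2(\partial)$-orthonormal Steklov eigenfunctions $u_0, u_1, \ldots, u_k$ on the disjoint union (each $u_j$ harmonic on a single component and zero on the others) and form test functions $\tilde u_j = \chi_\epsilon u_j$ on $M_\epsilon$, where $\chi_\epsilon$ equals $1$ outside a small collar around each $B_\epsilon^i$, vanishes on $N_\epsilon$, and satisfies $\int_{M_\epsilon}|\nabla\chi_\epsilon|^2 \to 0$ by the capacity assumption. Since $\chi_\epsilon \equiv 1$ near $\partial M_\epsilon$, the $\tilde u_j$ remain $L^2(\partial M_\epsilon)$-orthonormal, while $\int_{M_\epsilon}|\nabla \tilde u_j|^2 \to \sigma_j$; plugging the span of the $\tilde u_j$ into the min-max gives the bound directly.

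The lower bound $\liminf_{\epsilon \to 0}\sigma_k(M_\epsilon) \geq \sigma_k(M_1 \sqcup \cdots \sqcup M_s)$ requires a compactness argument. Let $v_0^\epsilon, \ldots, v_k^\epsilon$ be $L^2(\partial M_\epsilon)$-orthonormal Steklov eigenfunctions on $M_\epsilon$ with eigenvalues $\sigma_j(M_\epsilon)$, which by the upper bound are uniformly bounded. Each $v_j^\epsilon$ then has uniformly bounded Dirichlet energy, hence uniformly bounded $H^1$-norm on each $M_i^\epsilon = M_i \setminus B_\epsilon^i$. Using a cutoff together with a harmonic extension across the removed ball, I produce extensions $\hat v_j^\epsilon$ to the full $M_i$ with uniformly bounded $H^1$-norms and identical traces on $\partial M_i$. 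Passing to a subsequence yields weak $H^1$ and strong $L^2(\partial)$ limits $v_j$ on the disjoint union, still orthonormal and spanning a $(k+1)$-dimensional space; standard elliptic theory forces each $v_j$ to be harmonic on each $M_i$, and the Steklov boundary condition $\partial v_j/\partial\nu = (\lim \sigma_j(M_\epsilon))\, v_j$ passes to the limit, so the min-max yields the claim.

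The principal technical obstacle is showing that eigenfunctions do not concentrate in the neck and that the filling-in procedure introduces negligible extra energy; both are controlled by the same capacity estimate. In dimension $n \geq 3$ the relative capacity of a small geodesic ball scales as $\epsilon^{n-2}$, so one may remove arbitrarily small balls and connect them by essentially any short tube; in $n=2$ the borderline logarithmic behavior forces the neck to be simultaneously short and narrow in a compatible way, but these constraints remain mild (hence the ``quite weak conditions'' noted in the theorem). This single capacity condition simultaneously yields the smallness of $\int|\nabla\chi_\epsilon|^2$ used in the upper bound and the smallness of the harmonic extension energy needed to define $\hat v_j^\epsilon$ in the lower bound, unifying the two halves of the proof.
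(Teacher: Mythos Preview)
Your proposal is correct and follows the same two-sided strategy as the paper: an upper bound by transplanting cutoff eigenfunctions of the disjoint union, and a lower bound via compactness of the $M_\epsilon$-eigenfunctions. The technical execution differs in two minor respects. For the lower bound, the paper uses Schauder estimates to obtain $C^{2,\alpha}$ convergence on compact subsets of $M_i\setminus\{p_i\}$ and then a removable-singularity argument (logarithmic cutoff) to extend the limit across $p_i$; you instead extend each $v_j^\epsilon$ across the excised ball first and pass to a weak $H^1$ limit on the fixed manifold $M_i$, which is softer and avoids interior regularity theory. For the upper bound, because $\partial M_\epsilon$ is literally unchanged and your cutoff is identically $1$ near the boundary, $L^2(\partial)$-orthonormality is preserved automatically; this is cleaner than the paper's version, which recycles the Gram--Schmidt step from the boundary-gluing proof (where the cutoff does touch the boundary and that step is genuinely needed). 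One small point you leave implicit is the uniform interior $L^2$ bound on $v_j^\epsilon$ needed to get uniform $H^1$ control from the Dirichlet-energy bound; this is the content of the paper's Lemma~4.4.
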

The fact that the shape of the neck is unimportant in this theorem is consistent with the recent results of B. Colbois, A. Girouard, and A. Hassannezhad \cite{CGH} which show that up to constants the Steklov eigenvalues depend only on the geometry near the boundary of a manifold.

The combination of these results in the case $n=2$, which is stated in Corollary \ref{pet_cor}, yields the bounds stated for the supremum of the $k$-th Steklov eigenvalue of a surface in the paper of R. Petrides \cite{P2}.

Finally in Section 5 of this paper we explore the question of maximizing eigenvalues with symmetry imposed on the competing metrics versus maximizing over all smooth metrics. We consider this question in two specific cases of surfaces with $S^1$ symmetry group. The first case is the annulus where one can pose the maximization question over $S^1$-invariant metrics or over all metrics. In the case of the annulus we showed in our earlier work \cite{FS4} that for $k=1$ the global maximizer is $S^1$-invariant, so these maxima are the same. For $\sigma_k$ with $k\geq 2$ we show that the supremum over all metrics is strictly larger than the supremum over $S^1$-invariant metrics. It was shown by X. Q. Fan, L. F. Tam, and G. Yu \cite{FTY} that for $S^1$-invariant metrics all $\sigma_k$ for $k\neq 2$ are maximized by a smooth $S^1$-invariant metric. In the case $k=2$ it is
possible for a sequence of $S^1$-invariant metric annuli to degenerate to a pair of disks which is an explanation for why the extremal metric does not exist for $k=2$. On the other hand we show that for metrics degenerating to the union of the critical catenoid and a disk the limiting value is larger, so the $S^1$-invariant degeneration is not optimal among general metrics on the annulus.

The second case we consider is the case of the M\"obius band with $S^1$ symmetry. In this case it was recently shown by A. Fraser and P. Sargent \cite{FSa} that there is a smooth $S^1$-invariant metric which maximizes $\sigma_k$ for each $k$. In our earlier paper \cite{FS4} we showed that for $k=1$ the maximizer over all metrics exists and is $S^1$-invariant. We show here for $k\geq 2$ the supremum of $\sigma_k$ over all smooth metrics on the M\"obius band is strictly larger than the supremum over $S^1$-invariant metrics.

\section{Simply connected surfaces} \label{section:disk}

In this section we show that if $M$ is a simply connected surface with boundary, then for $k \geq 2$, the supremum of the $k$-th nonzero normalized Steklov eigenvalue $\sigma_k(g)L_g(\partial M)$ over all smooth metrics on $M$ is not achieved. There are two main ingredients in the proof. The first is the following characterization of maximizing metrics.

\begin{proposition}[{\cite[Proposition 2.4]{FS2}}] \label{prop:extremal}
If $M$ is a surface with boundary, and $g_0$ is a metric on $M$ with
\[
      \sigma_k(g_0)L_{g_0}(\p M)=\max_g \sigma_k(g) L_g(\p M)
\] 
where the max is over all smooth metrics on $M$. Then, rescaling the metric such that $\sigma_k(g_0)=1$, there exist independent $k$-th eigenfunctions $u_1, \ldots, u_n$, for some $n \geq 2$, that give a proper conformal immersion $u=(u_1, \ldots, u_n): M \rightarrow \mathbb{B}^n$ that is an isometry on $\p M$; in particular, $u(M)$ is a free boundary minimal surface.
\end{proposition}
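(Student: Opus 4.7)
The plan is to extract the map $u$ from the variational characterization of $\sigma_k$ by testing two kinds of deformations of $g_0$: conformal changes of the boundary length element (which will produce the isometry condition on $\partial M$) and changes of the interior conformal class (which will produce the conformality of $u$). Normalize $g_0$ so that $\sigma_k(g_0)=1$, and let $E\subset C^\infty(M)$ be the $\sigma_k$-eigenspace. The target of the first step is to produce $u_1,\ldots,u_n\in E$ with $\sum_i u_i^2\equiv 1$ on $\partial M$. Once this is in hand, $u(M)\subset\overline{\mathbb{B}^n}$ by the maximum principle applied to the subharmonic function $|u|^2$, the free boundary condition $\partial u/\partial\nu=u$ on $\partial M$ is the eigenvalue equation $\partial u_i/\partial\nu=\sigma_k u_i$ component by component, and properness with $u(\partial M)\subset S^{n-1}$ is automatic.

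For the first step, consider the family $g_t$ with $g_t|_{\partial M}=(1+t\phi)^2 g_0|_{\partial M}$ for arbitrary $\phi\in C^\infty(\partial M)$, smoothly extended into $M$. A Hadamard-type calculation applied to a normalized eigenfunction $u\in E$ gives $\dot\sigma_k[u]=-\sigma_k\int_{\partial M}u^2\phi\,ds_0$ and $\dot L[\phi]=\int_{\partial M}\phi\,ds_0$. Maximality of $\sigma_k L$ then forces that for every direction $\phi$ there exists $u\in E$ with $\dot{}(\sigma_k L)[u,\phi]\leq 0$, which amounts to saying that the constant density $L^{-1}\,ds_0$ lies in the weak-$\ast$ closed convex hull of $\{u^2\,ds_0:u\in E,\ \int u^2\,ds_0=1\}$. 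A Hahn--Banach separation argument then yields positive weights $\lambda_i$ summing to $1$ and eigenfunctions $u_i\in E$ with $\sum_i\lambda_i u_i^2=L^{-1}$ on $\partial M$; rescaling by $\sqrt{L\lambda_i}$ gives the desired $u_1,\ldots,u_n$.

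For the second step, take variations $g_t=g_0+th$ with $h$ a traceless symmetric $2$-tensor supported in the interior, so $L$ is fixed. The corresponding Hadamard formula is $\dot\sigma_k[u]=-\int_M\langle h,du\otimes du\rangle\,dA$, i.e.\ integration against the stress-energy tensor. Applied simultaneously to the $u_i$ from the first step, the joint maximality condition forces the trace-free symmetric $2$-tensor $\sum_i(du_i\otimes du_i-\tfrac{1}{2}|du_i|^2 g_0)$ to vanish identically, which is exactly the weak conformality of $u=(u_1,\ldots,u_n)$. A weakly conformal harmonic map from a surface is a (branched) minimal immersion, so $u(M)$ is a free boundary minimal surface in $\mathbb{B}^n$.

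The main obstacle is the failure of $\sigma_k$ to be smooth at $g_0$ when its multiplicity exceeds one: the directional derivatives above really define a quadratic form on $E$, and ``$\dot\sigma_k\leq 0$'' must be replaced by ``the largest eigenvalue of this quadratic form is $\leq 0$.'' The separation argument has to be run in this operator-valued form, identifying the convex hull of the rank-one projections $u\otimes u$ with the cone of nonnegative trace-one operators on $E$. Packaging the boundary and interior deformations into a single separation argument, and verifying that the resulting map is nonconstant so that $n\geq 2$ and the $u_i$ are linearly independent (which, for connected $\partial M$, already follows from $\sum_i u_i^2\equiv 1$ combined with orthogonality to the constants), are the remaining bookkeeping tasks.
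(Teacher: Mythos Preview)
This proposition is not proved in the present paper; it is quoted from \cite{FS2} and used as a black box. Your outline is essentially the argument given there (which in turn parallels the closed-surface treatments of Nadirashvili and El~Soufi--Ilias): boundary-measure variations force $\sum_i u_i^2\equiv 1$ on $\partial M$ via a convex-separation argument on the finite-dimensional eigenspace $E$, conformal-structure variations force the vanishing of $\sum_i(du_i\otimes du_i)^{\mathrm{tf}}$, and the multiplicity is handled by working with quadratic forms on $E$ rather than scalar derivatives.

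Two points worth tightening. First, your step~2 reads as if the conformal-class variation is applied to the particular $u_i$ already produced in step~1, but those $u_i$ carry no a~priori information about the conformal condition. As you note in your final paragraph, the two families of variations must be combined into a \emph{single} separation argument whose target is the pair $(L^{-1},0)$ and whose convex set is the image of the unit sphere in $E$ under $u\mapsto\bigl(u^2|_{\partial M},\,(du\otimes du)^{\mathrm{tf}}\bigr)$; since $\dim E<\infty$ this image lies in a finite-dimensional subspace, so Carath\'eodory gives a finite convex combination directly and no weak-$\ast$ closure is needed. This joint step is the substance of the proof and should be the main body, not a postscript. Second, a small mis-attribution: the boundary variation alone yields $|u|\equiv 1$ on $\partial M$, not the isometry condition; the isometry $|\partial_s u|\equiv 1$ follows only after one also has conformality together with the eigenfunction equation $\partial_\nu u=u$, since then $|\partial_s u|^2=|\partial_\nu u|^2=|u|^2=1$.
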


The second ingredient is the following minimal surface uniqueness theorem.

\begin{theorem}[{\cite[Theorem 2.1]{FS3}}] \label{theorem:uniqueness-disk}
Let $u: D \rightarrow \mathbb{B}^n$ be a proper branched minimal immersion, such that $u(D)$ meets $\p \mathbb{B}^n$ orthogonally. Then $u(D)$ is an equatorial plane disk.
\end{theorem}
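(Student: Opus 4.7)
The plan is to convert the free-boundary orthogonality into an overdetermined boundary value problem on the induced surface $(D,g)$, where $g := u^*g_{\mathbb{R}^n}$, and then extract rigidity by a Bochner/$P$-function analysis. Working in conformal parameters on $D$ so that each $u^i$ is $g$-harmonic, the orthogonality of $u(D)$ with $\partial \mathbb{B}^n$ together with $|u|=1$ on $\partial D$ translates to $\partial_{\nu_g} u^i = u^i$ on $\partial D$; in particular each $u^i$ is a Steklov eigenfunction of $(D,g)$ with eigenvalue $1$. Setting $h := 1 - |u|^2$ and using that $u:(D,g)\to \mathbb{R}^n$ is an isometric immersion (so $|\nabla_g u|^2 \equiv 2$) together with $\Delta_g u = 0$,
\[
\Delta_g h = -4 \text{ in } D, \qquad h = 0 \text{ and } \partial_{\nu_g} h = -2 \text{ on } \partial D.
\]

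Next I would extract the rigidity. By the Gauss equation $K = -\tfrac{1}{2}|II|^2 \le 0$, and by the two-dimensional Bochner identity $\tfrac{1}{2}\Delta_g|\nabla_g h|^2 = |\nabla_g^2 h|^2 + K|\nabla_g h|^2$ with the pointwise bound $|\nabla_g^2 h|^2 \ge \tfrac{1}{2}(\Delta_g h)^2 = 8$ (equality iff $\nabla_g^2 h = -2g$), the $P$-function
\[
P := |\nabla_g h|^2 + 4h - 4
\]
vanishes on $\partial D$ and satisfies $\Delta_g P = 2|\nabla_g^2 h + 2g|^2 + 2K|\nabla_g h|^2$. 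A boundary computation using $\nabla_g h = -2\nu$ and the relation $\nabla_g^2 h(\tau,\tau) = -\kappa_g\, \partial_{\nu_g} h$ gives $\partial_{\nu_g} P = 8(1-\kappa_g)$ on $\partial D$, where $\kappa_g$ is the geodesic curvature of $\partial D$ in $(D,g)$. Integrating, and combining with Gauss--Bonnet $\int_D K\, dA_g + \int_{\partial D}\kappa_g\, ds_g = 2\pi$ and with the Weinstock-type bound $L_g(\partial D) \ge 2\pi$ (which follows from $\sigma_1(g)\le 1$, since the $u^i$ realize the eigenvalue $1$), one aims to force the pointwise equalities $\nabla_g^2 h = -2g$ and $K|\nabla_g h|^2 \equiv 0$. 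Unique continuation then gives $K\equiv 0$, so $|II|\equiv 0$ and $u$ is totally geodesic.

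Once $u$ is totally geodesic, $u(D)$ is contained in an affine $2$-plane $\Pi \subset \mathbb{R}^n$; the free-boundary conditions $|u|=1$ and $u_\nu \parallel u$ on $\partial D$ then force $\Pi$ to pass through the origin and $u$ to be an isometric embedding of $D$ onto the unit disk of $\Pi$, i.e., an equatorial plane disk. The main obstacle is the rigidity step: the two terms in $\Delta_g P$ pull in opposite directions (the traceless-Hessian term is $\ge 0$ while $K|\nabla_g h|^2 \le 0$), so $P$ is neither sub- nor superharmonic a priori, and the equality argument has to be closed through the integrated identity above combined with the Weinstock length bound. Isolated branch points of $u$, where $|\nabla_g u|$ degenerates, would be handled by a local removability argument, since branches are of finite order by standard minimal-surface regularity.
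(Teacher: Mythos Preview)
This theorem is not proved in the present paper; it is quoted from \cite{FS3} and used as a black box, so there is no in-paper argument to compare against. On its own merits your Weinberger-type $P$-function approach has a genuine gap at the rigidity step.

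The setup is correct: $h=1-|u|^2$ satisfies $\Delta_g h=-4$, $h|_{\partial D}=0$, $\partial_{\nu} h=-2$, and the identities $\Delta_g P = 2|\nabla^2 h+2g|^2 + 2K|\nabla h|^2$ and $\partial_\nu P = 8(1-\kappa_g)$ hold. Integrating and inserting Gauss--Bonnet gives
\[
2\int_D |\nabla^2 h+2g|^2\,dA_g \;=\; 8\bigl(L_g(\partial D)-2\pi\bigr) \;+\; 2\int_D K\,(4-|\nabla h|^2)\,dA_g .
\]
Since $K\le 0$ and $|\nabla h|^2=4|u^{T}|^2\le 4$, the last integral is $\le 0$, so the identity \emph{produces} the inequality $L_g(\partial D)\ge 2\pi$; it does not use it. To force $\nabla^2 h\equiv -2g$ (and then $K\equiv 0$) you would need the reverse bound $L_g(\partial D)\le 2\pi$. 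Your invocation of Weinstock is in the wrong direction: Weinstock's inequality is $\sigma_1 L\le 2\pi$, and from the $u^i$ being eigenfunctions with eigenvalue $1$ you only obtain $\sigma_1\le 1$, which yields no bound on $L$ whatsoever. What would close the argument is $\sigma_1\ge 1$ (i.e.\ that the coordinates are \emph{first} eigenfunctions), but for a general free boundary minimal disk in $\mathbb{B}^n$ that is essentially the content of the theorem you are trying to prove. Equivalently, noting $P=-4|u^{\perp}|^2\le 0$ with $P|_{\partial D}=0$, one would like a maximum principle, but $\Delta_g P$ is not nonnegative (the $2K|\nabla h|^2$ term has the wrong sign) and no differential inequality $\Delta_g P\ge cP$ is available without further input. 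Interior branch points are also not innocuous here: they contribute cone-angle defects in Gauss--Bonnet that push the integrated identity further from equality rather than toward it.
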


We now state the theorem:

\begin{theorem} \label{theorem:disk}
Let $M$ be a simply connected surface with boundary. For $k \geq 1$, for any smooth metric $g$ on $M$, 
\[
         \sigma_k(g) L_g(\partial M) \leq 2 \pi k.
\]
For $k=1$, the equality is achieved if and only if $g$ is $\sigma$-homothetic to the Euclidean unit disk. For $k \geq 2$ the inequality is strict, and equality is achieved in the limit by a sequence of metrics degenerating to a union of $k$ touching Euclidean unit disks.
\end{theorem}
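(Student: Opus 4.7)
The upper bound $\sigma_k(g)L_g(\p M)\leq 2\pi k$ is the classical Hersch--Payne--Schiffer estimate quoted in the introduction, and the equality statement for $k=1$ is Weinstock's theorem. The substantive content of the theorem is therefore the strict inequality for $k\geq 2$ together with the construction of a degenerating sequence realizing the bound in the limit.

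For the strict inequality when $k\geq 2$, the plan is to argue by contradiction, combining Proposition \ref{prop:extremal} with Theorem \ref{theorem:uniqueness-disk}. Suppose a smooth metric $g_0$ on the disk $D$ satisfies $\sigma_k(g_0)L_{g_0}(\p D)=2\pi k$. After rescaling so that $\sigma_k(g_0)=1$, the assumption reads $L_{g_0}(\p D)=2\pi k$. Proposition \ref{prop:extremal} then produces a proper conformal immersion $u=(u_1,\ldots,u_n):D\to\mathbb{B}^n$ which is an isometry on $\p D$ and whose image is a free boundary minimal surface. Since a proper conformal immersion of a disk is in particular a proper branched minimal immersion, Theorem \ref{theorem:uniqueness-disk} forces $u(D)$ to be an equatorial plane disk in $\mathbb{B}^n$. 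Its boundary is a great circle of length $2\pi$; combined with the boundary isometry this forces $L_{g_0}(\p D)=2\pi$, which is incompatible with $L_{g_0}(\p D)=2\pi k$ as soon as $k\geq 2$.

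For the limit realization, the plan is to apply the boundary-gluing continuity result, Theorem \ref{theorem:gluing-boundary}, to $k$ copies of the Euclidean unit disk, glued pairwise along short boundary arcs in a chain so that the resulting surface $M_\epsilon$ remains simply connected (topologically a disk). The disjoint union $M_1\sqcup\cdots\sqcup M_k$ has boundary length $2\pi k$, and its Steklov spectrum is the union, with multiplicity, of the component spectra; the value $1$ (the first nonzero Steklov eigenvalue of each unit disk) is picked up from every component, so $\sigma_k(M_1\sqcup\cdots\sqcup M_k)=1$. Theorem \ref{theorem:gluing-boundary} then yields $L(\p M_\epsilon)\to 2\pi k$ and $\sigma_k(M_\epsilon)\to 1$, so $\sigma_k(M_\epsilon)L(\p M_\epsilon)\to 2\pi k$ as $\epsilon\to 0$.

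The conceptual difficulty is entirely absorbed into the quoted inputs: Theorem \ref{theorem:uniqueness-disk} does the real work in ruling out an extremal smooth metric, while Theorem \ref{theorem:gluing-boundary} does the real work in producing the degenerating sequence. The only point in the present proof that requires a small check is arranging the chain of $k-1$ boundary identifications so that $M_\epsilon$ is simply connected; once this topological choice is made the proof is a clean assembly of the three inputs with no further analysis.
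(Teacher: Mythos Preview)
Your argument is correct and follows essentially the same route as the paper. The strict-inequality step is identical in spirit: invoke Proposition~\ref{prop:extremal} to produce a free boundary map into $\mathbb{B}^n$, apply Theorem~\ref{theorem:uniqueness-disk} to force the image to be an equatorial disk, then read off a contradiction; the paper phrases the contradiction as ``$g$ is $\sigma$-homothetic to $\mathbb{D}$, hence $\sigma_k L=\sigma_k(\mathbb{D})\cdot 2\pi<2\pi k$'' while you phrase it as ``the boundary isometry forces $L=2\pi$,'' but these are equivalent once one knows the map is a conformal diffeomorphism onto the flat disk. For the degenerating sequence the paper simply cites Girouard--Polterovich~\cite{GP1}, whereas you invoke Theorem~\ref{theorem:gluing-boundary} applied to $k$ unit disks glued in a chain; this is the same construction (the paper itself remarks that \cite{GP1} is a special case of Theorem~\ref{theorem:gluing-boundary}), so there is no substantive difference.
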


\begin{proof}
The case $k=1$ is due to Weinstock \cite{W}. For $k \geq 2$ the upper bound $\sigma_k(g) L_g(\partial M) \leq 2 \pi k$ is due to Hersch-Payne-Schiffer \cite{HPS}. Girouard-Polterovich \cite{GP1} proved that this upper bound is sharp; precisely, they show that the upper bound is achieved in the limit by a sequence of metrics degenerating to a union of $k$ touching Euclidean unit disks. Moreover, for $k=2$ Girouard-Polterovich \cite{GP1} proved that the inequality is strict. We now show that the inequality is strict for all $k \geq 2$.

Suppose there exists a smooth metric $g$ such that $\sigma_k(g)L_g(\partial M)=2\pi k$. Since $\sigma_kL$ is invariant under rescaling of the metric, without loss of generality, assume $\sigma_k(g)=1$. Then by Proposition \ref{prop:extremal} there exist $k$-th eigenfunctions $u_1, \ldots, u_n$, for some $n \geq 2$, such that 
\[
       u:=(u_1, \ldots, u_n): M \rightarrow \mathbb{B}^n
\]
is a proper conformal branched minimal immersion such that $u(M)$ meets $\partial \mathbb{B}^n$ orthogonally, and $g$ is the induced metric on $\partial M$. By Theorem \ref{theorem:uniqueness-disk}, $u(M)$ is an equatorial plane disk. Thus, $g$ is $\sigma$-homothetic (see \cite[Definition 2.1]{FS4}) to the induced metric on the Euclidean unit disk $\mathbb{D}$, and so $\sigma_k(g)L_g(\partial M)= \sigma_k(\mathbb{D})L(\partial \mathbb{D})$. But $\sigma_k(\mathbb{D})L(\partial \mathbb{D})<2\pi k$, a contradiction.
\end{proof}

\section{Rigidity of the critical catenoid and M\"obius band} \label{section:rigidity}

The next natural case to consider after the disk is the annulus. In \cite{FS4} the authors proved that there exists a smooth metric that maximizes the first nonzero normalized Steklov on the annulus. Moreover, the authors proved that any maximizing metric on the annulus is $\sigma$-homothetic (see \cite[Definition 2.1]{FS4}) to the induced metric on the `critical catenoid'. The critical catenoid is the unique portion of a suitably scaled catenoid which defines a free boundary surface in $\mathbb{B}^3$.

\begin{theorem}[{\cite[Theorem 1.3]{FS4}}] \label{theorem:annulus}
For any metric on the annulus $M$ we have 
\[
         \sigma_1L\leq (\sigma_1L)_{cc}
\]
with equality if and only if $M$ is $\sigma$-homothetic to the critical catenoid.
\end{theorem}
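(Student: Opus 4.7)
The strategy follows the same three-part blueprint used in the disk case of Section \ref{section:disk} but with a substantive existence argument in place of the non-existence conclusion there. The three ingredients are a uniform upper bound on $\sigma_1(g)L_g(\p M)$ over all smooth annular metrics, existence of a smooth maximizer, and a classification of free boundary minimal annuli in $\mathbb{B}^n$ whose coordinate functions are first Steklov eigenfunctions.

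First I would establish a uniform upper bound on $\sigma_1(g)L_g(\p M)$, for instance by uniformizing a given annular metric to a flat cylinder $[0,L]\times S^1$ and running a Hersch-type center-of-mass balancing against the two boundary circles. To obtain existence I would take a maximizing sequence $g_j$ of unit boundary length and analyze the single real conformal modulus $\tau_j$, which one must show is trapped in a compact subset of $(0,\infty)$. Two degenerations have to be ruled out. In the collapsing limit $\tau_j\to 0$ the two boundary circles merge and $\sigma_1\to 0$. In the pinching limit $\tau_j\to\infty$ the annulus degenerates to a disjoint union of two disks, and the appearance of an additional locally-constant eigenfunction at eigenvalue $0$ forces $\sigma_1(g_j)\to 0$ as well; this is the feature distinguishing $\sigma_1$ from $\sigma_2$ (for $\sigma_2$ the pinching limit gives $4\pi$, explaining why no smooth maximizer exists there). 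Together these show the moduli remain in a compact interval, and a standard compactness plus elliptic regularity argument, together with continuity of Steklov eigenvalues under the bounded-modulus convergence (the sort of statement developed systematically in Section 4), yields a smooth limit metric $g_0$ on $M$ attaining the supremum.

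With existence in hand, Proposition \ref{prop:extremal} applied with $k=1$ produces independent first eigenfunctions $u_1,\ldots,u_n$ whose components give a proper conformal branched free boundary minimal immersion $u=(u_1,\ldots,u_n):M\to\mathbb{B}^n$ that is an isometry on $\p M$. The final step invokes the annulus analogue of Theorem \ref{theorem:uniqueness-disk}: any free boundary minimal annulus in $\mathbb{B}^n$ whose coordinate functions are first Steklov eigenfunctions is congruent to the critical catenoid. This identifies $u(M)$ with the critical catenoid and forces $g_0$ to be $\sigma$-homothetic to it. The hardest step is the minimal-surface classification itself, since one cannot reduce to codimension one a priori as in the disk case, and the argument must exploit the first-eigenfunction property together with the free boundary condition to pin down the surface; the non-degeneration of the maximizing sequence, while more elementary, also requires sharp eigenvalue estimates near the boundary of moduli space.
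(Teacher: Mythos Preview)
The paper does not prove this statement. Theorem~\ref{theorem:annulus} is quoted verbatim from \cite[Theorem 1.3]{FS4} and is used here only as background for the local rigidity results of Section~\ref{section:rigidity}; no argument for it appears anywhere in the present paper. There is therefore nothing to compare your proposal against.

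That said, your outline is broadly the structure of the proof in \cite{FS4}: establish existence of a smooth maximizing metric by controlling the conformal modulus of a maximizing sequence, invoke the characterization of extremal metrics (Proposition~\ref{prop:extremal}) to obtain a free boundary minimal immersion of the annulus into $\mathbb{B}^n$ by first eigenfunctions, and then apply the uniqueness theorem (stated here as Theorem~\ref{theorem:uniqueness-cc}) to identify the image with the critical catenoid. One caution on your non-degeneration sketch: the argument in \cite{FS4} for ruling out the pinching degeneration does not quite proceed by showing $\sigma_1(g_j)\to 0$ directly, but rather by comparing the limiting value of $\sigma_1 L$ against the disk bound $2\pi$ and noting that the critical catenoid already exceeds this. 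Your version would require the continuity-under-degeneration machinery of Section~\ref{section:gluing}, which is developed in this paper but was not available when \cite{FS4} was written.
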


For higher eigenvalues for the annulus there are upper bounds due to Karpukhin \cite{K1} (see also \cite{GP2}),
\[
        \sigma_k(g) L_g(\partial M) \leq 2\pi(k+1),
\]
but it is an open question whether these are sharp upper bounds, and whether there exist maximizing metrics for the higher eigenvalues. For the disk, the nonexistence of metrics that maximize higher eigenvalues, Theorem \ref{theorem:disk}, uses the minimal surface uniqueness theorem, Theorem \ref{theorem:uniqueness-disk}. For the annulus, if there exists a metric that maximizes $\sigma_kL$, then Proposition \ref{prop:extremal} characterizes the maximizing metric as being $\sigma$-homothetic to the induced metric from a free boundary minimal immersion of the annulus into $\mathbb{B}^n$ by $k$-th eigenfunctions, for some $n \geq 2$. Although the critical catenoid is the only known free boundary minimal annulus in $\mathbb{B}^3$, there are many other known free boundary minimal annuli in $\mathbb{B}^4$ \cite{FTY}, \cite{FSa}. The explicit characterization of the metric that maximizes $\sigma_1L$ in Theorem \ref{theorem:annulus} uses the following minimal surface uniqueness theorem that characterizes the critical catenoid as the only free boundary minimal immersion of the annulus into $\mathbb{B}^n$ by {\em first} eigenfunctions.

\begin{theorem}[{\cite[Theorem 1.2]{FS4}}] \label{theorem:uniqueness-cc}
If $\Sigma$ is a free boundary minimal surface in $\mathbb{B}^n$ which is homeomorphic
to the annulus and such that the coordinate functions are first Steklov eigenfunctions, then $n=3$ and 
$\Sigma$ is congruent to the critical catenoid.
\end{theorem}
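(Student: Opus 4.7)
The plan is to pass to conformal coordinates, expand the coordinate functions in Fourier modes on the resulting flat cylinder, and use the first-eigenfunction hypothesis to force all but the lowest modes to vanish, reducing the problem to an algebraic identification of the critical catenoid. Since $\Sigma$ is conformally an annulus, I would uniformize $\Sigma = [-T, T] \times S^1$ for a unique modulus $T > 0$, with induced metric $e^{2\phi}(dt^2 + d\theta^2)$. Because $\Phi = (x_1, \ldots, x_n)$ is a conformal minimal immersion, each $x_i$ is harmonic on the flat cylinder (by conformal invariance of harmonicity in dimension two) and thus admits a Fourier expansion
\[
x_i(t, \theta) = A_i + B_i t + \sum_{k \geq 1}\bigl[(\alpha_{i,k}\cosh(kt) + \beta_{i,k}\sinh(kt))\cos(k\theta) + (\gamma_{i,k}\cosh(kt) + \delta_{i,k}\sinh(kt))\sin(k\theta)\bigr].
\]
The free boundary condition $\partial_\nu \Phi = \Phi$ on $\partial \Sigma$ reads $\pm \partial_t x_i|_{t = \pm T} = e^{\phi(\pm T, \theta)} x_i|_{t = \pm T}$, saying precisely that each $x_i$ is a Steklov eigenfunction with eigenvalue $\sigma = 1$ for the induced boundary length $ds = e^{\phi}d\theta$.

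The second step is to use the hypothesis that these are \emph{first} eigenfunctions, $\sigma_1(\Sigma) = 1$, to eliminate all Fourier harmonics of order $k \geq 2$ from the $x_i$. Heuristically, high-$k$ modes are too oscillatory in $\theta$ to be first eigenfunctions on a cylinder; rigorously, if a nonzero mode of order $k \geq 2$ were present in some $x_i$, one should be able to construct a competitor test function (orthogonal to the constants in the induced boundary $L^2$ pairing) with Rayleigh quotient strictly less than $1$, contradicting $\sigma_1 = 1$. This is the main obstacle: because the induced boundary measure $e^{\phi}d\theta$ is not uniform in $\theta$, the Fourier modes in $\theta$ are not $L^2$-orthogonal against it and eigenvalues cannot be read off mode by mode, so the necessary estimate must couple the Steklov hypothesis with the conformality relations $|\Phi_t|^2 = |\Phi_\theta|^2$ and $\langle \Phi_t, \Phi_\theta\rangle = 0$, using minimality to control $\phi$ in terms of the very modes one is trying to rule out.

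Once only $k = 0$ (including the axial mode $t$) and $k = 1$ modes survive, we have
\[
\Phi(t,\theta) = \vec{A} + \vec{B}t + (\vec{C}\cosh t + \vec{D}\sinh t)\cos\theta + (\vec{E}\cosh t + \vec{F}\sinh t)\sin\theta
\]
for constant vectors in $\mathbb{R}^n$, and the problem reduces to finite-dimensional algebra. Expanding $|\Phi_t|^2 = |\Phi_\theta|^2$ and $\langle\Phi_t,\Phi_\theta\rangle = 0$ as identities in $t$ and $\theta$ produces orthogonality and norm equalities among $\vec{B}, \vec{C}, \vec{D}, \vec{E}, \vec{F}$; together with the boundary constraints $|\Phi|^2 = 1$ and $\partial_\nu \Phi = \Phi$ at $t = \pm T$, these equations force $\tanh T = 1/T$ and, after an orthogonal rotation of $\mathbb{R}^n$, show the image lies in a 3-plane, giving $n = 3$ and identifying $\Sigma$ with the critical catenoid.
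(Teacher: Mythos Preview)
This theorem is not proved in the present paper: it is quoted from \cite{FS4} and used here only as a black box to deduce Theorem~\ref{theorem:annulus-rigidity}. So there is no ``paper's own proof'' to compare against; what you have written is an independent attempt at the \cite{FS4} argument.

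Your outline is the right skeleton---uniformize to a flat cylinder, Fourier-expand the harmonic coordinate functions, kill the modes $k\ge 2$, then do algebra on the remaining $k=0,1$ modes---and steps 1, 2, and 4 are essentially correct as sketched. The genuine gap is exactly where you flag it: step~3. Your proposed mechanism (``if a $k\ge 2$ mode were present, build a competitor with Rayleigh quotient below $1$'') does not go through as stated, for precisely the reason you note: the boundary measure $e^{\phi}\,d\theta$ is a priori not $\theta$-invariant, so the Fourier modes are not Steklov-orthogonal and you cannot isolate a single mode as a test function without contamination from the others. Trying to push a direct Rayleigh-quotient estimate through the conformality relations alone does not obviously close.

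The way \cite{FS4} actually resolves this is to prove \emph{first} that the induced metric on $\partial\Sigma$ is $S^1$-invariant (i.e.\ $e^{\phi}$ restricted to each boundary circle is constant in $\theta$). This uses the first-eigenfunction hypothesis in a more global way, via a Hersch-type balancing/second-variation argument rather than a mode-by-mode competitor. Once the boundary measure is rotationally invariant, the Steklov problem on the flat cylinder separates variables, the Fourier modes \emph{are} orthogonal in the boundary $L^2$ inner product, and the elimination of $k\ge 2$ modes becomes the easy computation you have in mind. So your framework is right, but the missing ingredient---$S^1$-invariance of the boundary conformal factor---is the heart of the proof, and it does not follow from the heuristic you offer.
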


A consequence of Theorem \ref{theorem:uniqueness-cc} is the following local rigidity result for the critical catenoid.

\begin{theorem} \label{theorem:annulus-rigidity}
Any free boundary minimal annulus in $\mathbb{B}^n$ that is sufficiently $C^2$-close to the critical catenoid is a rotation of the critical catenoid.
\end{theorem}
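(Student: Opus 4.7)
The plan is to reduce Theorem \ref{theorem:annulus-rigidity} to the classification in Theorem \ref{theorem:uniqueness-cc} via a spectral perturbation argument. Let $\Sigma \subset \mathbb{B}^n$ be a free boundary minimal annulus $C^2$-close to the critical catenoid $C^{*}$, which we regard as sitting in a fixed $\mathbb{B}^3 \subset \mathbb{B}^n$. On any free boundary minimal submanifold of $\mathbb{B}^n$ the ambient coordinate functions $x_1, \ldots, x_n$ are automatically Steklov eigenfunctions with eigenvalue $1$: they are harmonic on $\Sigma$ because $\Sigma$ is minimal, and the free boundary condition $\Sigma \perp \partial \mathbb{B}^n$ is equivalent to $\partial_\nu x_i = x_i$ on $\partial \Sigma$. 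Hence to apply Theorem \ref{theorem:uniqueness-cc} it suffices to show that, for $\Sigma$ in a small $C^2$-neighborhood of $C^{*}$, one has $\sigma_1(\Sigma) = 1$ exactly.

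I would first record the spectrum of the model. Separating variables in the rotational angle on $C^{*}$ reduces the Steklov problem to one-dimensional Sturm--Liouville problems indexed by the Fourier mode $m$, and an explicit check shows that the eigenvalue $1$ arises only in the $m = 0$ mode (with eigenfunction $x_3$) and the $m = \pm 1$ modes (with eigenfunctions $x_1, x_2$); hence $\sigma_1(C^{*}) = 1$ with multiplicity exactly $3$ and $\sigma_4(C^{*}) > 1$. Next, I would invoke continuity of the first finitely many Steklov eigenvalues under $C^2$-perturbations of the embedding: writing $\Sigma$ as a normal graph over $C^{*}$ and pulling back the induced metric produces a family of metrics on the fixed manifold $C^{*}$ converging in $C^1$, so the Rayleigh min-max principle yields continuous dependence of each $\sigma_k$. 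In particular, $\sigma_4(\Sigma) > 1$ throughout a sufficiently small $C^2$-neighborhood of $C^{*}$. Since the restrictions of $x_1, x_2, x_3$ to $\Sigma$ are $C^0$-close to their counterparts on $C^{*}$, they remain linearly independent on $\Sigma$, so the eigenspace of eigenvalue $1$ on $\Sigma$ has dimension at least $3$. Combined with the gap $\sigma_4(\Sigma) > 1$, this forces $\sigma_1(\Sigma) = \sigma_2(\Sigma) = \sigma_3(\Sigma) = 1$. If $n > 3$, the same dimension bound further forces $x_4, \ldots, x_n$ on $\Sigma$ to lie in the span of $x_1, x_2, x_3$ on $\Sigma$, confining $\Sigma$ to a $3$-plane through the origin and reducing to $n = 3$.

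With $\sigma_1(\Sigma) = 1$ established, the coordinate functions on $\Sigma$ are first Steklov eigenfunctions, and Theorem \ref{theorem:uniqueness-cc} yields that $\Sigma$ is congruent to $C^{*}$. Since the congruence sends $C^{*}$ to a surface $C^2$-close to $C^{*}$ inside $\mathbb{B}^n$, it must be an element of $O(n)$ close to the identity, i.e.\ a small rotation, as claimed. The main obstacles are (i) the spectral gap $\sigma_4(C^{*}) > 1$, which requires the explicit Sturm--Liouville analysis on each Fourier mode of the catenoid, and (ii) the $C^2$-continuity of the first few Steklov eigenvalues under variation of an embedded free boundary minimal surface, which is standard once the pullback-of-metric reformulation is in place but demands some care with the boundary integrals. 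The remainder of the argument is then bookkeeping with the eigenspace of eigenvalue $1$.
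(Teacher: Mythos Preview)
Your argument is correct and follows essentially the same route as the paper's proof: both use the spectral gap $\sigma_4(C^*)>1$ on the critical catenoid together with $C^2$-continuity of the Steklov spectrum (the paper cites \cite[Lemma~2.5]{FS2} rather than sketching the pullback/min--max argument) to force $\sigma_1(\Sigma)=1$, and then invoke Theorem~\ref{theorem:uniqueness-cc}. One small imprecision: the congruence taking $C^*$ to $\Sigma$ need not be close to the identity in $O(n)$, since it may differ from the identity by any symmetry of the critical catenoid; but this does not affect the conclusion, as any origin-fixing congruence is already a rotation in the intended sense.
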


\begin{proof}
Let $\Sigma$ be the critical catenoid, and suppose that $\tilde{\Sigma}$ is a free boundary minimal annulus in $\mathbb{B}^n$ that is $C^2$ close to $\Sigma$. We know that $\sigma_0(\Sigma)=0$, $\sigma_1(\Sigma)=\sigma_2(\Sigma)=\sigma_3(\Sigma)=1$, and $\sigma_4(\Sigma) > 1$. Now $\sigma_0(\tilde{\Sigma})=0$, and since $\tilde{\Sigma}$ is a free boundary minimal surface, the coordinate functions $x^1, \ldots, x^n$ in $\mathbb{R}^n$ restricted to $\tilde{\Sigma}$ are Steklov eigenfunctions with eigenvalue 1. Note that the Steklov spectrum varies continuously if we take a $C^2$ perturbation of $\Sigma$,  \cite[Lemma 2.5]{FS2}. Therefore, given $\epsilon>0$, if $\tilde{\Sigma}$ is sufficently $C^2$-close to $\Sigma$, then $|\sigma_k(\tilde{\Sigma})-\sigma_k(\Sigma)|<\epsilon$. Choosing $\epsilon$ small, this implies that $n=3$, and $\sigma_1(\tilde{\Sigma})=1$. Therefore, by Theorem \ref{theorem:uniqueness-cc}, $\tilde{\Sigma}$ is congruent to $\Sigma$, and hence is a rotation of $\Sigma$.
\end{proof}

We have a similar local rigidity result for the critical M\"obius band. The {\em critical M\"obius band} is an explicit free boundary minimal embedding of the M\"obius band into $\mathbb{B}^4$ by first Steklov eigenfunctions (see \cite[Section 7]{FS4}). In \cite[Theorem 1.5]{FS4} the authors proved that the induced metric on the critical M\"obius band uniquely (up to $\sigma$-homothety) maximizes the first normalized Steklov eigenvalue among all smooth metrics on the M\"obius band. As in the case of the annulus, the characterization of the maximizing metric uses a minimal surface uniqueness theorem, \cite[Theorem 7.4]{FS4}, showing that the critical M\"obius band is the unique free boundary minimal M\"obius band in $\mathbb{B}^n$ such that the coordinate functions are first Steklov eigenfunctions. Another consequence of this is the following local uniqueness theorem for the critical M\"obius band:

\begin{theorem} \label{theorem:mobius-rigidity}
Any free boundary minimal M\"obius band in $\mathbb{B}^n$ that is sufficiently $C^2$-close to the critical M\"obius band is a rotation of the critical M\"obius band.
\end{theorem}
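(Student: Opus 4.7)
The plan is to parallel the proof of Theorem \ref{theorem:annulus-rigidity}, with the critical catenoid replaced by the critical M\"obius band $\Sigma\subset\mathbb{B}^4$ and Theorem \ref{theorem:uniqueness-cc} replaced by its M\"obius band analogue, \cite[Theorem 7.4]{FS4}. First, I would record the low-lying Steklov spectrum of $\Sigma$: since the four coordinate functions on $\Sigma$ are first Steklov eigenfunctions and span the $1$-eigenspace, one has $\sigma_0(\Sigma)=0$, $\sigma_1(\Sigma)=\sigma_2(\Sigma)=\sigma_3(\Sigma)=\sigma_4(\Sigma)=1$, and $\sigma_5(\Sigma)>1$.

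Next, I would fix a free boundary minimal M\"obius band $\tilde{\Sigma}\subset \mathbb{B}^n$ that is $C^2$-close to $\Sigma$ (both viewed in a common $\mathbb{B}^N$). Because $\tilde{\Sigma}$ is a free boundary minimal surface, its $n$ coordinate functions are Steklov eigenfunctions of eigenvalue $1$, and after reducing $n$ to the dimension of the affine span of $\tilde{\Sigma}$ they may be assumed linearly independent. By continuity of the Steklov spectrum under $C^2$ perturbations \cite[Lemma 2.5]{FS2}, given $\epsilon>0$ one has $|\sigma_k(\tilde{\Sigma})-\sigma_k(\Sigma)|<\epsilon$ for $k=0,\ldots,5$ once $\tilde{\Sigma}$ is sufficiently $C^2$-close to $\Sigma$. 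Choosing $\epsilon$ small enough that $\sigma_5(\tilde{\Sigma})>1$, the $1$-eigenspace of $\tilde{\Sigma}$ has dimension at most $4$, hence $n\leq 4$; on the other hand, $C^2$-closeness to the nondegenerate $\Sigma\subset\mathbb{B}^4$ rules out $n<4$. Thus $n=4$ and $\sigma_1(\tilde{\Sigma})=1$, with the coordinate functions of $\tilde{\Sigma}$ being first Steklov eigenfunctions.

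Applying \cite[Theorem 7.4]{FS4} to $\tilde{\Sigma}$ then gives that $\tilde{\Sigma}$ is congruent to $\Sigma$, and $C^2$-closeness forces the congruence to be a small rotation of $\mathbb{B}^4$, completing the proof. The main technical step is the spectral bookkeeping: one must maintain a common ambient ball during the perturbation, ensure the coordinate functions of $\tilde{\Sigma}$ remain linearly independent, and verify that the gap $\sigma_4(\Sigma)<\sigma_5(\Sigma)$ persists under $C^2$ perturbation so that exactly four eigenvalues of $\tilde{\Sigma}$ collapse to $1$. All of these are routine continuity/perturbation arguments once the Steklov spectrum of $\Sigma$ is in hand, so the proof effectively reduces to invoking \cite[Lemma 2.5]{FS2} together with the M\"obius uniqueness theorem \cite[Theorem 7.4]{FS4}.
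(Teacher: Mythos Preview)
Your proposal is correct and follows exactly the approach the paper indicates: the paper simply states that the proof is ``exactly analogous to the proof of Theorem~\ref{theorem:annulus-rigidity},'' and your write-up carries out precisely that analogy, recording the low Steklov spectrum of the critical M\"obius band, invoking spectral continuity under $C^2$ perturbation \cite[Lemma~2.5]{FS2} to force $n=4$ and $\sigma_1(\tilde{\Sigma})=1$, and then applying the M\"obius uniqueness theorem \cite[Theorem~7.4]{FS4}. The additional bookkeeping you include (affine span, ruling out $n<4$) is more explicit than the paper but entirely in the same spirit.
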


The proof is exactly analogous to the proof of Theorem \ref{theorem:annulus-rigidity}.

\section{Continuity of Steklov eigenvalues under degenerations} \label{section:gluing}

In this section we prove our main results showing that the first $k$ Steklov eigenvalues are continuous under certain degenerations. The difficult case is that of degenerations along the boundary.

\begin{customthm}{\ref{theorem:gluing-boundary}}
Let $M_1, \ldots, M_s$ be compact n-dimensional Riemannian manifolds with nonempty boundary. Given $\epsilon >0$, there exists a Riemannian manifold $M_\epsilon$, obtained by appropriately gluing $M_1, \ldots, M_s$ together along their boundaries, such that 
\begin{align*}
       \lim_{\epsilon \rightarrow 0} | \partial M_\epsilon| &=|\partial (M_1 \sqcup \cdots \sqcup M_s)| 
       \quad \mbox{ and} \\
       \lim_{\epsilon \rightarrow 0} \sigma_k (M_\epsilon) 
       &=\sigma_k(M_1 \sqcup \cdots \sqcup M_s)
\end{align*}
for $k=0, \, 1,\, 2, \ldots$.
\end{customthm}

We also prove an analogous result in the case of interior degenerations in Theorem \ref{theorem:gluing-interior}.
We remark that there are similar results of this type for closed manifolds (\cite{A}, \cite[Lemma 3.2]{CE}), but the proofs are technically quite different. 

\subsection{Preliminaries} \label{section:preliminaries}
Here we collect various useful estimates on domains in a manifold with bounded geometry. This includes some extensions and refinements of results in \cite[Section 2.2]{ST}. First, it is observed that bounded geometry implies the metric is uniformly equivalent to the Euclidean metric \cite{E} in balls of fixed radius. 

In this work we will need slight modifications of the standard Poincar\'e and Sobolev inequalities for functions in an annulus. We also will need estimates in half balls and half annuli. We use the notation $B^+_r$ to denote the points of $B_r$ which lie in a half space, say $x_n\geq 0$. We let $A^+=B^+_{r_0}\setminus B^+_{r_1}$ in $\mathbb R^n$. We let $\Gamma_r$ denote the portion of $\partial B^+_r$ on which $x_n=0$.

We assume that we have an annulus $A=B_{r_0}\setminus B_{r_1}$ in $\mathbb R^n$ with a metric $g$ which is uniformly equivalent to the Euclidean metric; specifically for a positive constant $C_1$ and all $a\in\mathbb R^n$
\[ 
           C_1^{-1}\sum_{i=1}^na_i^2\leq\sum_{i,j=1}^ng^{ij}a_ia_j\leq C_1\sum_{i=1}^na_i^2.
\] 
Then the following estimates hold. 

\begin{lemma}. \label{inequality} 
Suppose we have an annulus as above. 
\begin{enumerate}
\item \label{poincare} For any smooth function $f$ 
with $f=0$ on the inner boundary $\partial B_{r_1}$, there is a constant depending only on $r_0$, $r_1$ and $C_1$ such that,
\[ 
     \int_Af^2\ dv\leq c\int_A|\nabla_{g}f|^2\ dv.
\]
We also have 
\[ 
      \int_{\Gamma_{r_0}\setminus\Gamma_{r_1}}f^2\ da+\int_{A^+}f^2\ dv\leq c\int_{A^+}|\nabla_{g}f|^2\ dv
\] 
\item \label{sobolev} Assume $n\geq 3$. For any smooth function $f$ on $A$ with $f=0$ on $\partial B_{r_0}$,
there is a constant depending only on $n$ and $C_1$ (independent of $r_0$ and $r_1$) such that, 
\[
      \left(\int_A f^{\frac{2n}{n-2}}\ dv\right)^{\frac{n-2}{n}} \leq c\int_A|\nabla_g f|^2\ dv.
\]
Under the condition that $f=0$ on $\partial A^+\cap \partial B_{r_0}$ we have
\[ 
      \left(\int_{A^+} f^{\frac{2n}{n-2}}\ dv\right)^{\frac{n-2}{n}} \leq c\int_{A^+}|\nabla_g f|^2\ dv.
\]
\end{enumerate}
\end{lemma}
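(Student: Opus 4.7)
The overall plan is to reduce every inequality to the flat Euclidean metric. From the bounds on $g^{ij}$ one also gets $C_1^{-n/2} \leq \sqrt{\det g} \leq C_1^{n/2}$, so $|\nabla_g f|^2 \, dv_g$, $f^2 \, dv_g$, $f^{2n/(n-2)} \, dv_g$ and the induced boundary volume element all differ from their Euclidean counterparts by constants depending only on $n$ and $C_1$. It thus suffices to prove the four inequalities in the flat metric, absorbing a $C_1$-power into the final constant.

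For the Poincar\'e inequalities I use polar coordinates $x = s\omega$. On the full annulus, the condition $f(r_1 \omega) = 0$ together with Cauchy-Schwarz gives the pointwise bound $|f(r\omega)|^2 \leq (r_0 - r_1)\int_{r_1}^{r_0} |\partial_s f(s\omega)|^2 \, ds$; multiplying by $r^{n-1}$, integrating over $(r_1, r_0) \times S^{n-1}$, and using the crude bounds $r^{n-1} \leq r_0^{n-1}$ and $1 \leq (s/r_1)^{n-1}$ yields the inequality with constant depending on $r_0$ and $r_1$. The bulk bound on the half-annulus comes from the same argument restricted to the upper hemisphere. For the trace piece over $\Gamma_{r_0} \setminus \Gamma_{r_1}$, I invoke a standard $W^{1,2}$ trace inequality (obtained, for instance, by even reflection of $f$ across $\{x_n = 0\}$, which realizes $\Gamma_{r_0} \setminus \Gamma_{r_1}$ as an interior slice of the doubled half-annulus) to bound it by $\|f\|_{L^2(A^+)}^2 + \|\nabla f\|_{L^2(A^+)}^2$, and then apply the bulk Poincar\'e bound.

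The main difficulty is the Sobolev inequality, since the constant must not depend on $r_0$ or $r_1$; the naive Sobolev constant on an annulus degenerates as $r_1 \to 0$. My plan is to extend $f$ (which vanishes on $\partial B_{r_0}$) by zero to all of $\mathbb{R}^n \setminus \overline{B_{r_1}}$, and then fill in the hole via the conformal inversion $\Phi(x) = r_1^2 x / |x|^2$, setting $\tilde f(x) := f(\Phi(x))$ for $|x| \leq r_1$. The conformal factor $\lambda(x) = (r_1/|x|)^2$ produces a weight $(r_1/|y|)^{2(n-2)}$ under the change of variables, and since $|y| \geq r_1$ on the support of $f$ this weight is bounded by $1$ (here $n \geq 3$ is essential), so $\int_{B_{r_1}} |\nabla \tilde f|^2 \, dx \leq \int_A |\nabla f|^2 \, dy$. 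Then $\tilde f \in W^{1,2}(\mathbb{R}^n)$ has compact support with $\int_{\mathbb{R}^n} |\nabla \tilde f|^2 \leq 2 \int_A |\nabla f|^2$, and the classical Sobolev inequality on $\mathbb{R}^n$, whose constant depends only on $n$, yields the required bound uniformly in $r_0$ and $r_1$. The half-annulus Sobolev inequality follows by even reflection across $\{x_n = 0\}$, converting the problem to the full-annulus case with the outer vanishing condition preserved. The crucial point making the uniformity in $r_1$ work is that the conformal factor of the inversion is bounded by $1$ on the support of the reflected function, so the Dirichlet energy does not inflate.
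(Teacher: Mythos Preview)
Your proof is correct, but the Sobolev argument takes a genuinely different route from the paper's. The paper reduces the $L^2$ Sobolev inequality to the $L^1$ version and thence to a relative isoperimetric inequality $\operatorname{Vol}(\Omega)\leq c\,\operatorname{Vol}(\partial\Omega\setminus\partial B_{r_1})^{n/(n-1)}$ on $A$; the key geometric step is that the radial projection $P(x)=r_1 x/|x|$ onto $\partial B_{r_1}$ is volume-nonincreasing on hypersurfaces, and every ray through a point of $\partial\Omega\cap\partial B_{r_1}$ must hit $\partial\Omega$ again, so $\operatorname{Vol}(\partial\Omega\cap\partial B_{r_1})\leq\operatorname{Vol}(\partial\Omega\setminus\partial B_{r_1})$. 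Your approach instead stays on the analytic side: extend $f$ by zero outside $B_{r_0}$, fill in $B_{r_1}$ by the Kelvin-type reflection $\tilde f(x)=f(r_1^2 x/|x|^2)$, observe that the conformal change-of-variables weight $(r_1/|y|)^{2(n-2)}$ is $\leq 1$ on the support of $f$ (this is exactly where $n\geq 3$ enters), and invoke the sharp Sobolev inequality on $\mathbb{R}^n$. Both methods yield a constant depending only on $n$ (and $C_1$ after passing back to $g$); yours is shorter and avoids the isoperimetric detour, while the paper's radial-projection idea is more geometric and reappears in the same spirit later when they control boundary pieces of level sets in the neck region. For the Poincar\'e inequalities the paper simply invokes the mixed Dirichlet--Neumann first eigenvalue and metric equivalence, whereas you write out the one-dimensional Cauchy--Schwarz argument explicitly; these are equivalent in content. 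The half-annulus cases are handled by even reflection in both proofs.
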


\begin{proof} 
For the  Poincar\'e inequalities, it is noted that for the Euclidean case the constant is the inverse of the lowest eigenvalue for the problem with Dirichlet condition on the inner boundary and Neumann or Steklov conditions on the outer boundary components. Because the metric $g$ is uniformly equivalent to the Euclidean metric, each term of the inequality only varies within multiplicative bounds determined by that equivalence, so the result follows.  

The first version of the Sobolev inequality follows in a standard way from the corresponding $L^1$ inequality
\[ 
     \left(\int_Af^{\frac{n}{n-1}}\ dv\right)^{\frac{n-1}{n}}\leq c\int_A|\nabla_g f|\ dv
\]
for functions $f$ which vanish on the outer boundary. That, in turn, is equivalent to the isoperimetric inequality,
\[ 
              Vol(\Omega)\leq cVol(\partial\Omega\setminus\partial B_{r_1})^{\frac{n}{n-1}}
\]
for any $\Omega\subseteq A$. Note that it suffices to prove the inequality for the Euclidean metric since both sides have bounded ratio (with bound depending on $C_1$) with the corresponding quantity for the metric $g$. We note that the standard isoperimetric inequality for $\Omega$ may be written
\[ 
       Vol(\Omega) \leq c\Big(Vol(\partial\Omega\setminus\partial B_{r_1})
         +Vol(\partial\Omega\cap\partial B_{r_1})\Big)^{\frac{n}{n-1}}.
\]  
Next we observe that the radial projection map $P:A\to\partial B_{r_1}$ given by
$P(x)=r_1 x/|x|$ reduces volumes of hypersurfaces. It thus follows that
\[ 
      Vol(P(\partial\Omega\setminus\partial B_{r_1}))\leq Vol(\partial\Omega\setminus \partial B_{r_1}).
\]
On the other hand any ray through a point of $\partial\Omega\cap\partial B_{r_1}$
must intersect $\partial\Omega$ at a second point, and so we have 
\[ 
    \partial\Omega\cap\partial B_{r_1}\subseteq P(\partial\Omega\setminus\partial B_{r_1}).
\]
Combining this information with the isoperimetric inequality we have
\[ 
     Vol(\Omega)\leq 2^{\frac{n}{n-1}}cVol(\partial\Omega\setminus\partial B_{r_1})^{\frac{n}{n-1}}.
\]
This completes the proof of the desired isoperimetric inequality and the first part of assertion (\ref{sobolev})
follows as indicated above.

To handle the half annulus case we can extend $f$ from $A^+$ to $A$ by even reflection so that both integrals are doubled and we obtain from the previous inequality
\[ 
    \left(\int_{A^+} f^{\frac{2n}{n-2}}\ dv\right)^{\frac{n-2}{n}} \leq c2^{\frac{2}{n}}\int_{A^+}|\nabla_g f|^2\ dv
\]
where $c$ is the previous constant.
\end{proof}

We also need the following version of a logarithmic cut-off function argument.
\begin{lemma} \label{logcutoff} 
Suppose $B_{r_0}$, a ball in $\mathbb R^n$, is equipped with a metric equivalent to the Euclidean metric. For any $\epsilon$, there are small $\rho$, $\rho_1$ with $\rho<\rho_1 \ll r_0$ and a smooth cut-off function $\zeta$, which is $0$ for $x$ in $B_{r_0} \setminus B_{\rho_1}$ and $1$ for $x$ in $B_{\rho}$, such that the following holds. For any smooth function $u$,  
\begin{align*}
     \int_{B_{\rho_1}\setminus B_{\rho}}|\nabla\zeta|^2u^2\ dv 
     &\leq c\epsilon \left( \int_{B_{r_0}\setminus B_\rho} u^2 \ dv + \int_{B_{r_0}} |\nabla u|^2\ dv \right) \\
     \int_{B_{\rho_1}\setminus B_{\rho}}u^2\ dv 
     &\leq c\epsilon \left( \int_{B_{r_0}\setminus B_\rho} u^2 \ dv + \int_{B_{r_0}} |\nabla u|^2 \ dv \right).
\end{align*}
Here $c$ is a constant depending on $r_0$ and bounds on the eigenvalues of the metric with respect to the Euclidean metric. 
\end{lemma}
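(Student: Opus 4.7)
The plan is to take $\zeta$ to be (a smooth version of) the standard logarithmic cut-off on the annulus $A = \{\rho \le |x| \le \rho_1\}$: $\zeta \equiv 1$ on $B_\rho$, $\zeta \equiv 0$ on $B_{r_0} \setminus B_{\rho_1}$, and $\zeta(x) = \log(\rho_1/|x|)/L$ on $A$, where $L = \log(\rho_1/\rho)$. Then $|\nabla\zeta| \le c/(|x| L)$ on $A$, and a direct computation in polar coordinates yields
\[
  \int_A |\nabla\zeta|^n\, dv \le \frac{c}{L^{n-1}},
\]
because the radial factor $r^{-n} \cdot r^{n-1} = r^{-1}$ integrates to $L$ over $[\rho,\rho_1]$.

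For the first inequality in dimension $n \ge 3$, I would apply H\"older with exponents $n/2$ and $n/(n-2)$ followed by the Sobolev embedding $H^1(B_{r_0}) \hookrightarrow L^{2n/(n-2)}(B_{r_0})$ (valid with constant depending only on $r_0$ and $C_1$):
\[
  \int_A |\nabla\zeta|^2 u^2\, dv \le \|\nabla\zeta\|_{L^n}^2\, \|u\|_{L^{2n/(n-2)}}^2 \le \frac{c}{L^{2(n-1)/n}}\, \|u\|_{H^1(B_{r_0})}^2.
\]
To replace $\|u\|_{L^2(B_{r_0})}^2$ by $\|u\|_{L^2(B_{r_0}\setminus B_\rho)}^2$ on the right, I would bound the missing piece using the same Sobolev embedding: $\|u\|_{L^2(B_\rho)}^2 \le |B_\rho|^{2/n}\|u\|_{L^{2n/(n-2)}(B_{r_0})}^2 \le c\rho^2 \|u\|_{H^1}^2$, which for $\rho$ sufficiently small absorbs into $\|u\|_{H^1}^2 \le 2\,(\|u\|_{L^2(B_{r_0}\setminus B_\rho)}^2 + \|\nabla u\|_{L^2(B_{r_0})}^2)$. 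The second inequality is handled by the same Sobolev--H\"older pair: $\int_{B_{\rho_1}\setminus B_\rho} u^2\, dv \le |B_{\rho_1}|^{2/n}\|u\|_{L^{2n/(n-2)}}^2 \le c\rho_1^2 \|u\|_{H^1}^2$, combined with the same absorbing step.

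With these two estimates in hand I would first choose $\rho$ small enough that the absorbing inequality holds, then choose $\rho_1$ small enough that $c\rho_1^2 < \epsilon$, and finally (keeping $\rho_1$ fixed) take $\rho$ even smaller so that the ratio $\rho_1/\rho$ is large enough to force $c/L^{2(n-1)/n} < \epsilon$. The main obstacle is dimension $n = 2$, where the clean Sobolev exponent $2n/(n-2)$ is unavailable and $H^1$ only embeds into $L^p$ for $p<\infty$ with constant growing like $\sqrt{p}$. I would attack this either by a H\"older pairing $L^{2(1+\delta)}$--$L^{2(1+1/\delta)}$ with $\delta\sim 1/|\log\rho|$ chosen so that the blow-up $\rho^{-2\delta}$ in $\|\nabla\zeta\|_{L^{2(1+\delta)}}^2$ and the $1/\sqrt{\delta}$ growth of the Sobolev constant both remain controlled, or by passing to the conformally equivalent cylinder $[\log\rho,\log\rho_1]\times S^1$ and separating $u$ into its radial average and its $\theta$-mean-zero part, controlling the latter by Poincar\'e on circles and the former by a Hardy-type propagation of the trace from a shell outside $B_{\rho_1}$; this two-dimensional case is where the proof becomes most delicate.
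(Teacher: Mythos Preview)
For $n\ge 3$ your argument is essentially the paper's. Both use H\"older with exponents $n/2$ and $n/(n-2)$, the bound $\int_A|\nabla\zeta|^n\,dv\le cL^{1-n}$, and a Sobolev inequality. The differences are cosmetic: the paper fixes $\rho_1=\sqrt{\rho}$ from the start, and instead of the full-ball embedding $H^1(B_{r_0})\hookrightarrow L^{2n/(n-2)}$ followed by your absorption of $\|u\|_{L^2(B_\rho)}$, it multiplies $u$ by a second cut-off $\psi$ (equal to $1$ on $B_{\rho_1}$, supported in $B_{r_0}$) and applies the Sobolev inequality on the annulus $B_{r_0}\setminus B_\rho$ for functions vanishing on the outer boundary; this lands directly on $\int_{B_{r_0}\setminus B_\rho}u^2+\int_{B_{r_0}}|\nabla u|^2$ without absorption. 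The paper also gets the second inequality for free from the observation that $|\nabla\zeta|>1$ on $A$ for small $\rho$, rather than via a separate H\"older step.

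For $n=2$ your two sketches are plausible but not completed, and the paper's actual device is sharper than either. It introduces the double-logarithmic coordinate $t=\log\log(1/r)$, takes $\zeta$ linear in $t$ between $t_0=\log\log(1/\rho)$ and $t_0/2$, and observes that the volume form $(r\log(1/r))^{-2}\,dx$ is that of the hyperbolic metric $g=dt^2+e^{-2t}d\theta^2$, for which $\Delta_g t=-1$. Integrating $f^2\Delta_g t$ by parts gives a Poincar\'e inequality $\int f^2\,d\mu_g\le 4\int|\nabla f|^2\,dv$ (conformal invariance of the Dirichlet integral) for functions vanishing at the outer radius; applied to $\psi u$ with $\psi$ a fixed cut-off in $t$, this yields $\int_{B_{\rho_1}\setminus B_\rho}|\nabla\zeta|^2u^2\,dv\le ct_0^{-2}\int_{B_{r_0}\setminus B_\rho}(u^2+|\nabla u|^2)\,dv$. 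This bypasses both the Trudinger-type constant tracking in your first sketch and the radial/angular decomposition in your second.
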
 

\begin{proof}
For $n\geq 3$ we can take $\rho_1=\sqrt{\rho}$, and let $A=B_{\sqrt{\rho}}\setminus B_{\rho}$ and we set
\[ 
          \zeta(r)=\frac{\log(r/\sqrt{\rho})}{\log(1/\sqrt{\rho})}\ \mbox{for}\ \rho\leq r\leq \sqrt{\rho}.
\]
Note that the function $\zeta$ we have chosen is not smooth but only Lipschitz continuous. It is a standard argument to see that such a $\zeta$ can be approximated by smooth functions in the $W^{1,2}$ norm so that we can justify this choice. Also, it suffices to prove the first inequality because, for our choice of $\zeta$,
$1<|\nabla \zeta|$ on $A$.\\

Since $n\geq 3$ we can use the H\"older inequality to obtain
\[ 
        \int_{A}|\nabla\zeta|^2u^2\ dv
        \leq \left(\int_{A} |\nabla\zeta|^n\ dv\right)^{\frac{2}{n}} \left(\int_{A}u^{\frac{2n}{n-2}}\ dv\right)^{\frac{n-2}{n}}.
\]
From the definition of $\zeta$ and the conditions on the metric on the annulus we have
\[ 
         \int_{A}|\nabla\zeta|^n\ dv
         \leq c|\log(\rho)|^{-n}\int_{\rho}^{\sqrt{\rho}} r^{-1}dr\leq c|\log(\rho)|^{1-n}.
\]
Thus for any $\epsilon>0$, when $\rho$ is small enough we have
\[ 
      \int_{B_{\sqrt{\rho}}\setminus B_\rho}|\nabla\zeta|^2u^2\ dv
      \leq \epsilon \left(\int_{B_{\sqrt{\rho}}\setminus B_\rho}u^{\frac{2n}{n-2}}\ dv\right)^{\frac{n-2}{n}}.
\]
Now if $\psi$ is a cut-off function, which is $1$ on $B_{\sqrt{\rho}}$ and supported in $B_{r_0}$, then
we have
\[ 
          \left(\int_{B_{\sqrt{\rho}}\setminus B_\rho}u^{\frac{2n}{n-2}}\ dv\right)^{\frac{n-2}{n}}
          \leq \left(\int_{B_{r_0}}(\psi u)^{\frac{2n}{n-2}}\ dv\right)^{\frac{n-2}{n}}
          \leq c\int_{B_{r_0}}|\nabla (\psi u)|^2\ dv.
\]
Here we have used the Sobolev inequality, Lemma \ref{inequality}(\ref{sobolev}), for functions vanishing on the outer boundary of the annulus $B_{r_0}\setminus B_\rho$. Since the gradient of $\psi$ is bounded we obtain
\[ 
        \left(\int_{B_{\sqrt{\rho}}\setminus B_\rho}u^{\frac{2n}{n-2}}\ dv\right)^{\frac{n-2}{n}}
        \leq c\int_{B_{r_0}}|\nabla (\psi u)|^2\ dv      
        \leq c \left( \int_{B_{r_0} \setminus B_{\sqrt{\rho}}} u^2 \ dv + \int_{B_{r_0}} |\nabla u|^2 \ dv \right).
\]
Combining with our previous inequality we obtain,
\[ 
     \int_{B_{\sqrt{\rho}}\setminus B_{\rho}}|\nabla\zeta|^2u^2\ dv
     \leq c\epsilon \left( \int_{B_{r_0} \setminus B_{\sqrt{\rho}}} u^2 \ dv + \int_{B_{r_0}} |\nabla u|^2 \ dv \right).
\]

For $n=2$ we can obtain the conclusion in a slightly different way. We let $t=\log(\log(1/r))$, with $t_0=\log(\log(1/\rho))$, and choose $\rho_1$ such that $t_0/2=\log(\log(1/\rho_1))$. We now choose $\zeta$ to be a linear function of $t$ which is $1$ at $t=t_0$ and $0$ at $t=t_0/2$. We then have 
\[  
          \int_{B_{\rho_1}\setminus B_\rho}|\nabla\zeta|^2u^2\ dv
          = ct_0^{-2} \int_{B_{\rho_1}\setminus B_\rho}(r\log(1/r))^{-2}u^2\ dv.
\]
We observe that since the metric is near Euclidean in an appropriate annulus $B_{r_0}\setminus
B_\rho$ where $r_0$ is a fixed radius, we may do the estimate in the Euclidean metric. In this case, the volume form $(|x|\log(1/|x|))^{-2}dx^1dx^2$ is that of the hyperbolic metric on the cylinder $\mathbb R\times \mathbb S^1$ given by $dt^2+e^{-2t}d\theta^2$ with coordinates $t=\log(\log(1/|x|))$ and the polar coordinate $\theta$. The annulus now becomes the cylinder $[t_0/2, t_0]\times \mathbb S^1$.
 
Consider the eigenvalue problem with boundary conditions which are Dirichlet at $t=\log\log(1/r_0)$ and Neumann at $t=t_0$. If $g$ denotes the hyperbolic metric we have $\Delta_g(t)=-1$, and so if $f$ is a function which is zero at $t=\log\log(1/r_0)$ we have
\[ 
        \int_{B_{r_0}\setminus B_\rho} f^2\ d\mu_g
        =-\int_{B_{r_0}\setminus B_\rho} f^2\Delta_g(t)\ d\mu_g
        \leq \int_{B_{r_0}\setminus B_\rho} \langle \nabla t,\nabla f^2\rangle\ d\mu_g
\]
where we have used the fact that the boundary term on the outer boundary is nonpositive.
Using the fact that $|\nabla t|=1$ together with the Schwarz inequality we obtain
\[ 
     \int_{B_{r_0}\setminus B_\rho} f^2\ d\mu_g
     \leq 2\int_{B_{r_0}\setminus B_\rho} |f||\nabla_g f|\ d\mu_g.
\]
Using the Schwarz inequality again and the arithmetic mean - geometric mean inequality we obtain the Poincar\'e inequality
\[ 
       \int_{B_{r_0}\setminus B_\rho} f^2\ d\mu_g \leq 4 \int_{B_{r_0}\setminus B_\rho}|\nabla f|^2\ dv,
\]
where we have used the conformal invariance of the Dirichlet integral.

Choosing $\psi$ to be a cut-off function of $t$ which is $0$ for $t\leq\log\log(1/r_0)$ and $1$ for $t\geq\log\log(1/r_0)+1$, we may apply the above Poincar\'e inequality to obtain,
\[ 
    \int_{B_{\rho_1}\setminus B_\rho}(r\log(1/r))^{-2}u^2\ dv
    \leq \int_{B_{r_0}\setminus B_\rho} (\psi u)^2(r\log(1/r))^{-2}\ dv
    \leq 4\int_{B_{r_0}\setminus B_\rho}|\nabla (\psi u)|^2\ dv.
\]
We have chosen $\psi$ so that it has bounded derivatives, so we obtain,
\[ 
     \int_{B_{\rho_1}\setminus B_{\rho}}|\nabla\zeta|^2u^2\ dv
     \leq ct_0^{-2}\int_{B_{r_0}\setminus B_\rho}(u^2+|\nabla u|^2)\ dv.
\]
Since $t_0$ is as large as we like when $\rho$ is chosen small, this completes the proof of the first inequality. The second follows because $|\nabla\zeta|^2$ is large on $A$.
\end{proof}

We will also need an analogous result for the case of half balls.
\begin{lemma} \label{lemma:halfball-cutoff-estimate}
Suppose $B_{r_0}$, a ball in $\mathbb R^n$, is equipped with a metric equivalent to the Euclidean metric. For any $\epsilon$, there are small $\rho$, $\rho_1$ with $\rho<\rho_1 \ll r_0$ and a smooth cut off function $\zeta$, which is $0$ for $x$ in $B_{r_0} \setminus B_{\rho_1}$ and $1$ for $x$ in $B_{\rho}$, such that the following holds. For any smooth function $u$ defined on $B^+_{r_0}$,  
\begin{align*}
       \int_{B^+_{\rho_1}\setminus B^+_{\rho}}|\nabla\zeta|^2u^2\ dv 
       &\leq c\epsilon \left( \int_{B^+_{r_0}\setminus B^+_{\rho}} u^2 \ dv 
                                          + \int_{B^+_{r_0}} |\nabla u|^2 \ dv \right)\\
       \int_{\Gamma_{\rho_1}\setminus\Gamma_\rho}u^2\ da+\int_{B^+_{\rho_1}\setminus B^+_{\rho}}u^2\ dv 
       &\leq c\epsilon \int_{B^+_{r_0}}(u^2+|\nabla u|^2)\ dv
\end{align*}
Here $c$ is a constant depending on $r_0$ and bounds on the eigenvalues of the metric with respect to the Euclidean metric. 
\end{lemma}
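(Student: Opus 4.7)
The plan is to mirror the proof of Lemma \ref{logcutoff} using the same radial logarithmic cutoff $\zeta$, replacing the full-ball estimates with their half-ball analogs from Lemma \ref{inequality}, and then supplementing with a trace-type estimate to control the new boundary integral in the second inequality.

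For the first inequality in the case $n \geq 3$, I would apply H\"older's inequality on the half-annulus $A^+ = B^+_{\rho_1}\setminus B^+_\rho$ to split
\[
\int_{A^+}|\nabla\zeta|^2 u^2\, dv \leq \left(\int_{A^+}|\nabla\zeta|^n\, dv\right)^{2/n}\left(\int_{A^+} u^{2n/(n-2)}\, dv\right)^{(n-2)/n},
\]
bound the first factor by $c|\log\rho|^{2(1-n)/n}$ using the explicit form of $\zeta$ exactly as in Lemma \ref{logcutoff}, and then control the second factor by introducing a cutoff $\psi$ which equals $1$ on $B^+_{\sqrt{\rho}}$ and vanishes on the outer spherical part of $\partial B^+_{r_0}$, and applying the half-annulus Sobolev inequality of Lemma \ref{inequality}(\ref{sobolev}) to $\psi u$ on $B^+_{r_0}\setminus B^+_\rho$. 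For the case $n=2$, I would extend $u$ by even reflection across the hyperplane $\{x_n=0\}$ to a function $\tilde u$ on the full ball $B_{r_0}$; every integral in the first inequality is exactly doubled under this reflection, so the $n=2$ case of Lemma \ref{logcutoff} applied to $\tilde u$ immediately yields the desired estimate for $u$ on the half-ball.

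For the second inequality, the volume contribution $\int_{B^+_{\rho_1}\setminus B^+_\rho}u^2$ is immediate from the first inequality, since $|\nabla\zeta|^2 \geq 1$ on $A^+$ by the choice of $\zeta$. The main obstacle is the new boundary contribution $\int_{\Gamma_{\rho_1}\setminus\Gamma_\rho}u^2$, which is not captured by the $|\nabla\zeta|$ device used for the interior; my plan is to handle it via the Sobolev trace embedding combined with H\"older's inequality, exploiting the small $(n-1)$-dimensional measure $c\rho_1^{n-1}$ of the region $\Gamma_{\rho_1}\setminus\Gamma_\rho$. For $n\geq 3$, the trace embedding $H^1(B^+_{r_0})\hookrightarrow L^{2(n-1)/(n-2)}(\Gamma_{r_0})$ together with H\"older's inequality gives
\[
\int_{\Gamma_{\rho_1}\setminus\Gamma_\rho}u^2\, da \leq c\rho_1 \int_{B^+_{r_0}}(u^2+|\nabla u|^2)\, dv,
\]
and for $n=2$ the same strategy works using the embedding $H^1(B^+_{r_0})\hookrightarrow L^p(\Gamma_{r_0})$ for any $p<\infty$ together with H\"older on the interval $\Gamma_{\rho_1}\setminus\Gamma_\rho$ of length $\sim \rho_1$. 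Choosing $\rho_1$ sufficiently small then produces the desired $c\epsilon$ prefactor. The Sobolev trace embeddings invoked, while not stated explicitly in Section \ref{section:preliminaries}, follow from the half-ball Sobolev inequality of Lemma \ref{inequality}(\ref{sobolev}) by a routine extension argument.
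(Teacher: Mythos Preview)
Your proposal is correct and follows essentially the same route as the paper. The only minor difference is that for the first inequality when $n\geq 3$ you work directly in the half-ball using the half-annulus Sobolev inequality of Lemma~\ref{inequality}(\ref{sobolev}), whereas the paper handles all dimensions uniformly by extending $u$ to the full ball via even reflection and invoking Lemma~\ref{logcutoff}; for the boundary term $\int_{\Gamma_{\rho_1}\setminus\Gamma_\rho}u^2$, your trace-plus-H\"older argument is exactly what the paper does.
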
 
\begin{proof} The first inequality and the following part of the second
\[ 
     \int_{B^+_{\rho_1}\setminus B^+_{\rho}}u^2\ dv 
     \leq c\epsilon\int_{B^+_{r_0}\setminus B^+_{\rho}}(u^2+|\nabla u|^2)\ dv
\]
follow by extending $u$ to $B_{r_0}$ by even reflection and applying the previous lemma.

It remains to prove
\[ \int_{\Gamma_{\rho_1}\setminus\Gamma_\rho}u^2\ da\leq c\epsilon\int_{B^+_{r_0}}(u^2+|\nabla u|^2)\ dv.
\]
We can prove this by using the standard result that on a half ball $B^+_{r_0}$ we have the bound for any $p$ with $2\leq p\leq \frac{2(n-1)}{n-2}$ for $n\geq 3$ and $2\leq p<\infty$ for $n=2$
\[ 
      \left(\int_{\Gamma_{r_0}} u^p\ da\right)^{\frac{2}{p}}
      \leq c\int_{B_{r_0}} (u^2+|\nabla u|^2)\ dv
\]
where $c$ depends on $p$ and $r_0$. We can then fix $p>2$ depending on $n$ and use the H\"older inequality to obtain
\[ 
       \int_{\Gamma_{\rho_1}}u^2\ da\leq c\rho_1^{(n-1)(p-2)/p} \left(\int_{\Gamma_{r_0}}u^p\ da \right)^{\frac{2}{p}}
       \leq c\rho_1^{(n-1)(p-2)/p}\int_{B_{r_0}}(u^2+|\nabla u|^2)\ dv.
\]
Since $\rho_1$ is as small as we wish, this implies the desired bound.
\end{proof}

Finally, we will need the following bound on the $L^2$ norm for functions on a manifold with uniform geometry, in terms of the $L^2$ norm of the function on the boundary and its energy in the interior.

\begin{lemma} \label{lemma:L^2-bound}
For any $W^{1,2}$ function $u$ on an $n$-dimensional Riemannian manifold $(M,g)$ with boundary,
\[
      \int_M u^2 \; dv \leq C\left( \int_{\partial M} u^2 \; da + \int_M |\nabla u|^2 \; dv \right)
\]
where $C$ is a constant depending on $M$. 
\end{lemma}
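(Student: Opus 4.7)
The plan is to prove this by a standard compactness/contradiction argument, using the Rellich--Kondrachov theorem together with the continuity of the trace operator, both of which apply to a compact Riemannian manifold $(M,g)$ with smooth boundary. The hypotheses in the paper ensure $M$ is compact, so these tools are available.

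I would argue by contradiction. Suppose no such constant $C$ exists. Then there is a sequence $\{u_j\}\subset W^{1,2}(M)$ with
\[
        \int_M u_j^2\,dv = 1
        \quad\text{and}\quad
        \int_{\partial M} u_j^2\,da + \int_M |\nabla u_j|^2\,dv \longrightarrow 0
        \quad\text{as } j\to\infty.
\]
In particular, $\{u_j\}$ is bounded in $W^{1,2}(M)$. By the Rellich--Kondrachov compactness theorem, passing to a subsequence, $u_j \to u$ strongly in $L^2(M)$ and weakly in $W^{1,2}(M)$ for some $u\in W^{1,2}(M)$. Lower semicontinuity of the Dirichlet energy under weak $W^{1,2}$ convergence gives $\int_M |\nabla u|^2\,dv = 0$, so $u$ is constant on each connected component of $M$. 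Also $\int_M u^2\,dv = 1$, so $u$ is not identically zero.

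Next, the trace operator $W^{1,2}(M)\to L^2(\partial M)$ is continuous, so the weak convergence $u_j\rightharpoonup u$ in $W^{1,2}(M)$ yields at least weak convergence of the boundary traces $u_j|_{\partial M} \rightharpoonup u|_{\partial M}$ in $L^2(\partial M)$. Combined with $\int_{\partial M} u_j^2\,da \to 0$, the weak lower semicontinuity of the $L^2(\partial M)$ norm forces $u|_{\partial M} = 0$. But $u$ is locally constant, so this means $u\equiv 0$ on every connected component of $M$ that meets $\partial M$. Since we assume $M$ has nonempty boundary on each connected component in this context (the ambient setup concerns manifolds with boundary), we conclude $u\equiv 0$, contradicting $\int_M u^2\,dv = 1$.

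The main subtlety, and the only step that requires care, is the continuity of the trace operator in conjunction with weak convergence; this is standard for a compact manifold with Lipschitz (here smooth) boundary. Should one wish to avoid abstract functional analysis, an alternative route is to cover $M$ by finitely many charts in which $g$ is uniformly equivalent to the Euclidean metric, use the half-ball trace estimate appearing in the proof of Lemma \ref{lemma:halfball-cutoff-estimate}, and patch the local estimates via a Poincaré inequality on the interior together with a partition of unity. Either route gives the claimed constant $C = C(M,g)$.
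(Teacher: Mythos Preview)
Your argument is correct. The contradiction/compactness route via Rellich--Kondrachov and the trace operator is a standard and efficient way to prove this type of inequality, and all the steps you invoke (weak $W^{1,2}$ compactness, strong $L^2$ convergence, weak lower semicontinuity of the Dirichlet energy, weak--weak continuity of the bounded trace operator) hold on a compact manifold with smooth boundary. Your remark that every connected component of $M$ must meet $\partial M$ is the only structural hypothesis needed, and it is implicit in the paper's statement as well (the inequality is plainly false on a closed component).

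The paper's proof is different in spirit: it is entirely constructive. It first proves the inequality by hand on a Euclidean rectangular solid $R=(-1,1)^{n-1}\times[0,1)$ using the fundamental theorem of calculus along the $x_n$ direction together with the Poincar\'e inequality on slices, then covers a collar neighborhood $S$ of $\partial M$ by finitely many such boxes, and finally handles $M\setminus S$ via a cut-off function and the Dirichlet Poincar\'e inequality. Your alternative ``hands-on'' sketch at the end is essentially this argument. The advantage of the paper's approach is that the constant $C$ is in principle traceable to concrete geometric data (bi-Lipschitz constants of the boundary charts, the Poincar\'e constant of $M$), which can matter when one later applies the lemma on varying domains and wants uniformity. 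Your compactness argument is shorter and cleaner but yields an existential, non-explicit constant; for the lemma as stated this is perfectly adequate.
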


\begin{proof}
It is sufficient to assume that $u$ is Lipschitz. First consider a rectangular solid $R=(-1,1)^{n-1} \times [0,1) \subset \mathbb{R}^n$ with coordinates $x=(x_1, \ldots, x_n)$. We will use the shorthand notation $x=(x', x_n)$ with $x'=(x_1, \ldots x_{n-1})$. Given a Lipschitz function $u$ on $R$, let
\[
      F(t)=\int_{\{x_n=t\}} u(x',x_n) \; dx'.
\]
Then $\int_R u(x) \; dx = \int_0^1 F(t) \; dt$, and
\[
      F'(t)=\frac{d}{dt} \int_{\{x_n=t\}} u(x',x_n) \; dx' =\int_{\{x_n=t\}} \fder{u}{x_n}(x',x_n) \; dx'.
\]
Therefore, given $t_0 \in [0,1)$,
\begin{equation} \label{equation:slice_estimate}
     F(t_0) - F(0) = \int_0^{t_0} F'(t) \; dt 
      = \int_0^{t_0} \int_{\{x_n=t\}} \fder{u}{x_n}(x',x_n) \; dx' \; dt  
      \leq \int_R |\nabla u | \; dx
\end{equation}
Letting $\overline{u}(t)$ denote the average of $u$ on the slice $\{x_n=t\}$, using (\ref{equation:slice_estimate}) we have
\begin{align*}
     \left(\overline{u}(t_0)\right)^2
     &= \frac{\left(\int_{\{x_n=t_0\}} u(x',x_n) \; dx' \right)^2}{\left(\int_{\{x_n=t_0\}} \; dx' \right)^2} \\
     & \leq 2^{2-2n} \left( \int_{\{x_n=0\}} u(x',x_n) \; dx' + \int_R |\nabla u| \; dx \right)^2 \\
     & \leq 2^{3-2n} \left[ \left(\int_{\{x_n=0\}} u(x',0) \; dx'\right)^2 + \left(\int_R |\nabla u| \; dx \right)^2 \right].
\end{align*}
Using this and the Poincar\'e inequality,
\begin{align*}
     \int_{\{x_n=t\}} & u^2(x',x_n) \; dx' 
     = \int_{\{x_n=t\}} \left( u - \overline{u}(t) \right)^2 \; dx' + \left(\overline{u}(x',t)\right)^2 \\
     & \leq C(n) \int_{\{x_n=t\}} |\nabla u|^2(x',t) \; dx' 
       + 2^{2-n} \left[ \int_{\{x_n=0\}} u^2(x',0) \; dx' + \int_R |\nabla u|^2 \; dx \right]
\end{align*}
Integrating from $t=0$ to $t=1$, we obtain 
\begin{equation} \label{equation:rectangle-bound}
      \int_R u^2 \; dx  \leq C \left( \int_R |\nabla u|^2 \; dx + \int_{\{x_n=0\}} u^2 \; dx' \right)
\end{equation}
where $C$ is a constant depending only on $n$.

We may cover a neighborhood  $S$ of $\partial M$ by a finite number of open sets intersecting $\partial M$, each of which is uniformly bi-Lipschitz equivalent to $R$, with $\partial M$ corresponding to the face with $x_n=0$. Because the metric $g$ is uniformly equivalent to the Euclidean metric on each open set in the cover, each term of the inequality (\ref{equation:rectangle-bound}) only varies within multiplicative bounds determined by that equivalence, and hence 
\begin{equation} \label{equation:strip}
      \int_S u^2 \; dv  \leq C \left( \int_S |\nabla u|^2 \; dv + \int_{\partial M} u^2 \; da \right)
\end{equation}
where $C$ depends on $M$. Now let $\zeta$ be a smooth cut-off function with $\zeta=0$ on $\partial M$ and $\zeta=1$ on $M \setminus S$. By the Poincar\'e inequality,
\begin{align*}
      \int_{M \setminus S} u^2 \; dv & \leq \int_M (\zeta u)^2 \; dv 
       \leq C \int_M |\nabla (\zeta u)|^2 \; dv \\
      & \leq C \int_M \left( |\nabla \zeta|^2 u^2 + \zeta^2 |\nabla u|^2 \right) \; dv \\
      & \leq C \int_S u^2 \; dv + C \int_M |\nabla u|^2 \; dv
\end{align*}
where in the above, the constant $C$ may have increased from one line to the next, but its dependence is always only on $M$. Combing this with (\ref{equation:strip}), we obtain the desired bound
\[
     \int_M u^2 \; dv = \int_{M \setminus S} u^2 \; dv + \int_S u^2 \; dv 
      \leq C\left( \int_{\partial M} u^2 \; da + \int_M |\nabla u|^2 \; dv \right)
\]
where $C$ is a constant depending on $M$. 
\end{proof}

\subsection{Gluing construction and neck estimate} \label{section:gluing-construction}
Let $(M_1,g_1)$ and $(M_2,g_2)$ be compact $n$-dimensional Riemannian manifolds with nonempty boundary. We glue $M_1$ and $M_2$ together along their boundaries as follows. Let $p_1 \in \partial M_1$ and $p_2 \in \partial M_2$. Choose $r_0>0$ such that the metrics $g_1$, $g_2$ are uniformly equivalent to the Euclidean metric in balls of radius $r_0$. Given $\rho >0$ sufficiently small, and $\rho_1=\rho_1(\rho)$ with $0 < \rho < \rho_1 \ll r_0$ as in the proof of Lemma \ref{logcutoff}, choose a smooth metric $g_{i,\rho}$ on $M_i$ such that $g_{i,\rho}$ is flat on the geodesics ball $\mathcal{B}^i_{\rho_1}(p_i)$ of radius $\rho_1$ in $M_i$ centered at $p_i$ and equal to $g_i$ on $M_i \setminus \mathcal{B}^i_{2\rho_1}(p_i)$ for $i=1,\,2$.

For $n \geq 3$, consider a catenoid in $\mathbb{R}^n$; that is, a complete minimal hypersurface of revolution that is not a hyperplane. A catenoid is parametrized by an embedding
\[
       F: I \times \mathbb{S}^{n-2} \rightarrow \mathbb{R}^n
\]
with $F(t, \omega)=(\phi(t)\omega, t)$, where $\phi: I \rightarrow \mathbb{R}$ is a solution, defined on a maximal interval $I=(-a(n),a(n))$, of an ODE corresponding the the minimal surface equation. There exists $l:=l(n) < a(n)$, such that the portion of the catenoid corresponding to $-l \leq t \leq l$ is volume minimizing \cite[Corollary 3]{S}. Given $\rho>0$, consider a rescaled portion of the catenoid given by
\[
      \tilde{F}: [-l,l] \times \mathbb{S}^{n-2} \rightarrow \mathbb{R}^n
\]
with
\[
      \tilde{F}(t, \omega)=\frac{1}{R} (\phi(t)\omega,t)
\]
where $R=\rho/\phi(l)$. Then consider the solid catenoidal tube 
\[
     T_\rho:=\tilde{F}([-l,l] \times \overline{\mathbb{B}}^{n-2}).
\]
Note that the ends of $T_\rho$, corresponding to $t=\pm l$, are Euclidean balls of radius $\rho$, and the catenoid portion $\tilde{F}([-l,l] \times \mathbb{S}^{n-2})$ of the boundary of $T_\rho$ is a volume minimizing hypersurface. For $n=2$, let $T_\rho$ be a Euclidean square of side length $2\rho$. In this case note that the boundary portion of $T_\rho$ consisting of two aligned parallel line segments of length $2\rho$ that are a distance $2\rho$ apart is length minimizing with respect to its boundary points.

We now let $M_\rho$ be the Lipschitz Riemannian manifold obtained by gluing $(M_1,g_{1,\rho})$ and $(M_2,g_{2,\rho})$ together along their boundaries using the tube $T_\rho$. Specifically, $M_\rho$ is obtained by identifying one end of $T_\rho$ with $\partial M_1 \cap \mathcal{B}^1_{\rho}(p_1)$, and the other end of $T_\rho$ with $\partial M_2 \cap \mathcal{B}^2_{\rho}(p_2)$. Let
\[
      N_\rho:= \mathcal{B}^1_{\rho_1}(p_1) \cup T_\rho \cup \mathcal{B}^2_{\rho_1}(p_2),
\]
with the identifications  as above. $N_\rho$ is a Euclidean domain with piecewise smooth boundary.

An important ingredient in the proof of Theorem \ref{theorem:gluing-boundary} is that for a sequence of eigenfunctions, the $L^2$ norm on the boundary $\partial M_\rho$ doesn't concentrate on the boundary of the tube $\partial M_\rho \cap \partial T_\rho$ as $\rho \rightarrow 0$. In order to prove this, we will need the following two lemmas. The first lemma uses in a key way the geometry of the neck region $N_\rho$.

\begin{lemma} \label{lemma:volume-comparison}
Let $f: N_\rho \rightarrow \mathbb{R}$ be a smooth function with $f \geq 0$ on $N_\rho$ and $f=0$ on $\partial \mathcal{B}^i_{\rho_1}(p_i) \setminus \partial M_i$ for $i=1, \, 2$. Then
\[
     \mbox{Vol}(\{ x \in \partial N_\rho : \, f(x) > t \}) \leq \mbox{Vol}(\{ x \in N_\rho : \, f(x) = t \}).
\]
\end{lemma}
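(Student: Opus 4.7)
Plan. The strategy is to apply the divergence theorem separately on $\Omega_t \cap T_\rho$ and on each $\Omega_t \cap \mathcal B^i_{\rho_1}(p_i)$, using a constant unit vector field in the flat half-balls and a unit calibration vector field (coming from the area-minimizing property of the catenoid) in the tube, and then to combine the three resulting inequalities so that the contributions along the two end-disks $D_\pm$ cancel.

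Setup. Fix a regular value $t>0$ of $f$ and set $\Omega_t = \{f>t\}$, $\Sigma_t = \{f=t\}$, and $B_t = \Omega_t \cap \partial N_\rho$. Since $f\equiv 0<t$ on the hemispheres $\partial \mathcal B^i_{\rho_1}(p_i) \setminus \partial M_i$, the set $B_t$ splits as $B_t = B_t^{cat} \sqcup B_t^{ann,1} \sqcup B_t^{ann,2}$, where $B_t^{cat}$ lies on the catenoidal neck $C \subset \partial T_\rho$ and each $B_t^{ann,i}$ lies on the flat annular part of $\partial M_i$. Let $D_+,D_- \subset \partial T_\rho$ be the two end-disks where $T_\rho$ is glued to $\partial M_1,\partial M_2$ (which are interior to $N_\rho$ after gluing).

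Bound on each half-ball. In Fermi coordinates on $\mathcal B^i_{\rho_1}(p_i)$ where $\partial M_i \subset \{x_n=0\}$ and the half-ball sits in $\{x_n\geq 0\}$, take $X = -e_n$. Then $|X|=1$, $\operatorname{div}X = 0$, and $X\cdot\nu = 1$ on both $B_t^{ann,i}$ and the end-disk $D_i$ (whose outward normal from $\mathcal B^i$ is $-e_n$). Since $\Omega_t$ is disjoint from the hemisphere for $t>0$, applying the divergence theorem to $\Omega_t \cap \mathcal B^i_{\rho_1}(p_i)$ and bounding $|X\cdot \nabla f/|\nabla f|| \leq 1$ on $\Sigma_t \cap \mathcal B^i_{\rho_1}(p_i)$ yields
\[
|B_t^{ann,i}| + |\Omega_t \cap D_i| \;\leq\; |\Sigma_t \cap \mathcal B^i_{\rho_1}(p_i)|, \qquad i=1,2.
\]

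Bound on the tube. By the area-minimizing property of $C$ (the basis for the construction of $T_\rho$), there exists a closed $(n-1)$-form $\omega_C$ on $T_\rho$ of comass $\leq 1$ satisfying $\omega_C|_C = d\mathrm{vol}_C$. For $n\geq 3$ one realizes $\omega_C$ as the form associated to the unit normal $X^{cat}$ of the foliation of $T_\rho$ by rescaled catenoids (with the axis, a set of zero $(n-1)$-measure, handled by a standard excision). Applying Stokes' theorem to $\Omega_t \cap T_\rho$, whose oriented boundary decomposes as $(\Sigma_t \cap T_\rho) \cup B_t^{cat} \cup (\Omega_t \cap D_+) \cup (\Omega_t \cap D_-)$, and estimating each integral of $\omega_C$ over $\Sigma_t \cap T_\rho$ and over $\Omega_t \cap D_\pm$ by the corresponding volume via the comass bound, gives
\[
|B_t^{cat}| \;\leq\; |\Sigma_t \cap T_\rho| + |\Omega_t \cap D_+| + |\Omega_t \cap D_-|.
\]

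Conclusion. Summing the three bounds, the end-disk terms $|\Omega_t \cap D_\pm|$ cancel:
\[
|B_t| \;=\; |B_t^{cat}| + |B_t^{ann,1}| + |B_t^{ann,2}| \;\leq\; |\Sigma_t \cap T_\rho| + \sum_{i=1}^{2} |\Sigma_t \cap \mathcal B^i_{\rho_1}(p_i)| \;=\; |\Sigma_t|.
\]
The remaining cases (non-regular $t$ and $t=0$) follow by continuity via the coarea formula.

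Main obstacle. The key technical point is the existence of the calibration $\omega_C$ (equivalently, the unit divergence-free vector field $X^{cat}$) in $T_\rho$ that equals the outward unit normal of $T_\rho$ on all of $C$. For $n\geq 3$ this comes from the foliation by rescaled catenoids, with the axis singularity handled by excising a small tubular neighborhood and passing to the limit; the contribution there vanishes since the axis has zero $(n-1)$-measure and all relevant integrands remain bounded after excision. For $n=2$, where the ``catenoid'' degenerates to a pair of parallel segments, the natural foliation has a jump at the central axis; in this case one replaces the single tube bound by two bounds, one on each half $T_\rho^{\pm} = T_\rho \cap \{\pm x > 0\}$ with $X = \pm e_1$ respectively, and absorbs the resulting extra axis term using the specific square geometry of $T_\rho$.
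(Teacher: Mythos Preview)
Your overall architecture is sound and close in spirit to the paper's: both proofs split $\partial\Omega_t\cap\partial N_\rho$ into the catenoidal piece and the two flat annular pieces, bound the catenoidal piece using the area-minimizing property of $C$, and bound the annular pieces by comparing with the level set via the orthogonal direction $-e_n$. Your half-ball estimate via the divergence theorem with $X=-e_n$ is correct and is exactly the calibration reformulation of the paper's orthogonal-projection argument. Your way of combining the three estimates (letting the end-disk contributions cancel) is a clean alternative to the paper's device of splitting $\Sigma_t$ by the solid cylinder of radius $\rho$; both organizations yield the same conclusion.

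The genuine gap is in your justification of the tube bound. The implication ``$C$ is area-minimizing $\Rightarrow$ $C$ is calibrated by a smooth closed form on $T_\rho$'' is not a general fact, and your proposed realization via the unit normal to a foliation by rescaled catenoids does not work: dilates of the catenoid do not foliate the solid region $T_\rho$ (for small scale they either fail to reach the end-planes or exit through $C$), so there is no such smooth normal field, and the axis-excision patch does not rescue this. The good news is that you do not need a calibration at all. The inequality you want,
\[
|B_t^{cat}|\;\le\;|\Sigma_t\cap T_\rho|+|\Omega_t\cap D_+|+|\Omega_t\cap D_-|,
\]
follows directly from the area-minimizing property of $C$: as integral currents, $\partial\Omega_t'=B_t^{cat}+\big[(\Sigma_t\cap T_\rho)+(\Omega_t\cap D_+)+(\Omega_t\cap D_-)\big]$ is closed, so $C-B_t^{cat}+\big[(\Sigma_t\cap T_\rho)+(\Omega_t\cap D_\pm)\big]$ has the same boundary as $C$ and hence at least as much mass. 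This is precisely the comparison the paper makes in its inequality~(4.4), and it works uniformly for $n\ge 2$ (so your separate $n=2$ workaround, which as written does not close, is unnecessary). With this replacement your proof goes through.
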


\begin{proof}
Let $\Omega_t=\{ x \in N_\rho : f(x) > t \}$ for $t>0$. Then
$\partial \Omega_t = ( \partial \Omega_t \cap \partial N_\rho) \cup (\partial \Omega_t \setminus \partial N_\rho)$
with
\begin{align*}
    \partial \Omega_t \cap \partial N_\rho & = \{ x \in \partial N_\rho : f(x) \geq t \} \\
    \partial \Omega_t \setminus \partial N_\rho & = \{ x \in N_\rho : f(x) = t \}.
\end{align*}

We first consider the portion of $\partial \Omega_t \cap \partial N_\rho$ that lies on $\partial M_i$, $i=1, \, 2$.
Recall that $\mathcal{B}^i_{\rho_1}(p_i)$ is a Euclidean half ball, and observe that the orthogonal projection maps $P_i: \mathcal{B}^i_{\rho_1}(p_i) \rightarrow \mathcal{B}^i_{\rho_1}(p_i) \cap \partial M_i$ for $i=1, \, 2$ reduce volumes of hypersurfaces. Also, since $f=0$ on $\partial \mathcal{B}^i_{\rho_1}(p_i) \setminus \partial M_i$, it follows that $\overline{\Omega}_t \cap (\partial \mathcal{B}^i_{\rho_1}(p_i) \setminus \partial M_i) = \emptyset$. This, together with the fact that $(\mathcal{B}^i_{\rho_1}(p_i) \setminus \mathcal{B}^i_{\rho}(p_i)) \cap \partial M_i$ is connected and flat, implies that the line orthogonal to $\partial M_i$ through any point $x \in \partial \Omega_t \cap \partial M_i$ with $f(x)>0$, must intersect $\partial \Omega_t$ at a second point. Therefore,
\begin{align} \label{equation:end-volume}
     \mbox{Vol}(\partial \Omega_t \cap \partial M_i) 
     & \leq \mbox{Vol} \left( 
     P_i \left(( \partial \Omega_t \setminus \partial N_\rho) \cap (\mathcal{B}^i_{\rho_1}(p_i) \setminus C)\right)
                                  \right)  \no \\
     & \leq \mbox{Vol}
     \left( ( \partial \Omega_t \setminus \partial N_\rho) \cap (\mathcal{B}^i_{\rho_1}(p_i) \setminus C) \right)
\end{align}
where $C$ is the solid cylinder of radius $\rho$ with axis through $p_i$ orthogonal to $\partial M_i$.

We next consider the remaining portion of $\partial \Omega_t \cap \partial N_\rho$, which lies on $\partial T_\rho \cap \partial N_\rho$. Let $\Omega_t'= \Omega_t \cap T_\rho$. Then clearly, $\mbox{Vol}(\partial \Omega_t' \setminus \partial N_\rho) \leq \mbox{Vol}((\partial \Omega_t \setminus \partial N_\rho) \cap C)$, since $\partial \Omega_t' \setminus \partial N_\rho \subset C$ and the portions of $\partial \Omega_t' \setminus \partial N_\rho$ that differ from $\partial \Omega_t \setminus \partial N_\rho \cap C$ consist of subsets of $\mathcal{B}^i_{\rho}(p_i) \cap \partial M_i$ that are contained in the orthogonal projection of $(\partial \Omega_t \setminus \partial N_\rho) \cap C$ onto the flat ball $\mathcal{B}^i_{\rho}(p_i) \cap \partial M_i$. Furthermore, $\partial \Omega_t' \cap (\partial T_\rho \cap \partial N_\rho) = \partial \Omega_t \cap (\partial T_\rho \cap \partial N_\rho)$. 
Since $\partial T_\rho \cap \partial N_\rho$ is volume minimizing and $\partial \Omega_t \cap (\partial T_\rho \cap \partial N_\rho)$ and $\partial \Omega_t' \setminus \partial N_\rho$ have the same boundary,
\begin{equation} \label{equation:tube-volume}
       \mbox{Vol}(\partial \Omega_t \cap (\partial T_\rho \cap \partial N_\rho))
       \leq \mbox{Vol}(\partial \Omega_t' \setminus \partial N_\rho)
       \leq \mbox{Vol}( (\partial \Omega_t \setminus \partial N_\rho) \cap C).
\end{equation}
Combining (\ref{equation:end-volume}) and (\ref{equation:tube-volume}), we obtain the desired volume comparison,
\[
      \mbox{Vol}(\{ x \in \partial N_\rho : \, f(x) > t \}) = \mbox{Vol} (\partial \Omega_t \cap \partial N_\rho)
      \leq \mbox{Vol} (\partial \Omega_t \setminus \partial N_\rho)
      = \mbox{Vol}(\{ x \in N_\rho : \, f(x) = t \}).     
\]
\end{proof}

As a consequence of the previous lemma, we have the following.

\begin{lemma}  \label{lemma:poincare}
Let $w$ be a smooth function on $N_\rho$ with $w=0$ on $\partial \mathcal{B}^i_{\rho_1}(p_i) \setminus \partial M_i$ for $i=1, \, 2$. Then
\begin{align*}
            \int_{N_\rho} w^2 \; dv  &\leq C(n) \rho \int_{N_\rho} |\nabla w|^2 \; dv \\
       \int_{\partial N_\rho} w^2 \; da & \leq C(n) \sqrt{\rho} \int_{N_\rho} |\nabla w|^2 \; dv.
\end{align*}
\end{lemma}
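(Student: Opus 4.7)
Both estimates are derived from Lemma~\ref{lemma:volume-comparison} combined with a uniform isoperimetric input. Applying Lemma~\ref{lemma:volume-comparison} to $f = w^2$ and integrating the level-set comparison against $dt$, the co-area formula gives
\[
\int_{\partial N_\rho} w^2\, da \;=\; \int_0^\infty \mathrm{Vol}(\{w^2 > t\}\cap\partial N_\rho)\, dt \;\leq\; \int_0^\infty \mathrm{Vol}(\{w^2 = t\}\cap N_\rho)\, dt \;=\; 2\int_{N_\rho} |w||\nabla w|\, dv,
\]
and Cauchy--Schwarz then yields
\[
\int_{\partial N_\rho} w^2\, da \;\leq\; 2\Big(\int_{N_\rho} w^2\, dv\Big)^{1/2}\Big(\int_{N_\rho} |\nabla w|^2\, dv\Big)^{1/2}.
\]
Thus the second inequality is reduced to the first with an extra square-root factor, so it suffices to prove the interior $L^2$ bound.

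For the first inequality, the plan is to prove a uniform Sobolev--Poincar\'e inequality on $N_\rho$ with a constant depending only on $n$, and then apply H\"older together with the volume estimate $|N_\rho| \leq C\rho^{n/2}$ (which for $n \geq 3$ follows from $\rho_1 = \sqrt{\rho}$ and $|T_\rho| = O(\rho^n)$). After the modifications $g_{i,\rho}$ the half-balls $\mathcal{B}^i_{\rho_1}$ are Euclidean and $T_\rho$ is Euclidean by construction, so $N_\rho$ embeds isometrically as a Lipschitz domain in $\mathbb{R}^n$. For any Caccioppoli set $\Omega\subset N_\rho$ whose closure is disjoint from the two outer spherical caps, the Euclidean isoperimetric inequality gives $|\Omega|^{(n-1)/n} \leq c_n|\partial\Omega|_{\mathbb{R}^n}$, while Lemma~\ref{lemma:volume-comparison}, applied to mollifications of $\mathbf{1}_\Omega$ and passed to the limit, yields $\mathcal{H}^{n-1}(\partial\Omega\cap\partial N_\rho) \leq \mathcal{H}^{n-1}(\partial\Omega\cap N_\rho^\circ)$. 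Together these produce the relative isoperimetric inequality
\[
|\Omega|^{(n-1)/n} \;\leq\; 2c_n\, \mathcal{H}^{n-1}(\partial\Omega\cap N_\rho^\circ)
\]
with constant depending only on $n$. The standard Federer--Fleming co-area argument upgrades this to the $L^1$-Sobolev--Poincar\'e inequality $\|w\|_{L^{n/(n-1)}} \leq C\|\nabla w\|_{L^1}$, and applying it to $|w|^{2(n-1)/(n-2)}$ gives $\|w\|_{L^{2n/(n-2)}} \leq C\|\nabla w\|_{L^2}$ for $n\geq 3$. H\"older's inequality combined with the volume bound then produces the first estimate.

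The main technical obstacle is establishing the relative isoperimetric inequality with a constant independent of $\rho$: the tube cross-sections are of size $\rho \ll \rho_1$, and a naive piecewise Poincar\'e estimate on the half-balls and the tube separately would not couple them across the neck in a way that maintains the desired $\rho$ scaling. Lemma~\ref{lemma:volume-comparison} is precisely what bridges this bottleneck, and its proof rests crucially on the volume-minimizing property of the catenoidal lateral surface of $T_\rho$ in dimensions $n\geq 3$ (and on the length-minimizing property of the flat sides of the square tube when $n=2$). In the borderline dimension $n=2$ one replaces the Sobolev embedding step by the endpoint inequality $\|w\|_{L^2}\leq C\|\nabla w\|_{L^1}$ that follows directly from the relative isoperimetric inequality via co-area, adjusting the volume accounting accordingly.
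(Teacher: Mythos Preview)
Your proposal follows essentially the same route as the paper: both derive a relative isoperimetric inequality on $N_\rho$ with constant depending only on $n$ by combining Lemma~\ref{lemma:volume-comparison} with the Euclidean isoperimetric inequality, pass to the $L^1$-Sobolev inequality via co-area, and then obtain the two $L^2$ bounds by applying these inequalities to $w^2$ together with Cauchy--Schwarz (the boundary estimate in both cases rests on the co-area identity $\int_{\partial N} f\,da \le \int_N |\nabla f|\,dv$ coming directly from Lemma~\ref{lemma:volume-comparison}). The only presentational difference is that the paper first normalizes to the fixed domain $N=N_{1/2}$ and then rescales by the factor $2\rho$ to read off the $\rho$-dependence, whereas you work directly on $N_\rho$ and invoke the volume bound $|N_\rho|\le C\rho^{n/2}$.
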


\begin{proof}
In what follows, $C(n)$ may increase from one line to the next, but its dependence will always be only on $n$.
First consider the Euclidean domain $N=N_\rho$ with $\rho=1/2$. Let $f: N \rightarrow \mathbb{R}$ be a smooth function with $f \geq 0$ on $N$ and $f=0$ on $\partial \mathcal{B}^i_{\rho_1}(p_i) \setminus \partial M_i$ for $i=1, \, 2$. Let $\Omega_t=\{ x \in N : f(x) > t \}$. By the isoperimetric inequality and Lemma \ref{lemma:volume-comparison},
\begin{align*}
      \mbox{Vol}(\{ x \in N : \, f(x) > t \}) & = \mbox{Vol}(\Omega_t) 
      \leq C(n) \mbox{Vol}(\partial \Omega)^{\frac{n}{n-1}} \\
      &= C(n) \left( \mbox{Vol}(\partial \Omega_t \cap \partial N) 
                    + \mbox{Vol}(\partial \Omega_t \setminus \partial N)\right)^{\frac{n}{n-1}} \\
      & \leq C(n) 2^{\frac{n}{n-1}} \mbox{Vol}(\partial \Omega_t \setminus \partial N)^{\frac{n}{n-1}} \\
      &= C(n) 2^{\frac{n}{n-1}} \mbox{Vol}(\{x \in N : \, f(x)=t \})^{\frac{n}{n-1}}.
\end{align*}
This implies the Sobolev inequality (see for example \cite[page 90]{SY}),
\[
      \left(\int_{N} f^{\frac{n}{n-1}} \right)^{\frac{n-1}{n}} \; dv  \leq C(n) \int_{N} |\nabla f| \; dv.
\]
Then using H\"older's inequality we obtain
\[
     \int_{N} f \; dv \leq C(n) \mbox{Vol}(N)^{\frac{1}{n}} \int_{N} |\nabla f| \; dv.
\]
Applying this to the function $f=w^2$, we obtain
\begin{align*}
     \int_{N} w^2 \; dv & \leq C(n) \int_{N} |\nabla w^2| \; dv 
     = C(n) \int_{N} 2|w||\nabla w| \; dv \\
     & \leq 2 C(n) \left( \int_{N} w^2 \; dv \right)^{\frac{1}{2}} \left( \int_{N} |\nabla w|^2 \; dv \right)^{\frac{1}{2}} \\
     & \leq \frac{1}{2} \int_{N} w^2 \; dv + 2 C(n)^2 \int_{N} |\nabla w|^2 \; dv.
\end{align*}      
Therefore,
\begin{equation} \label{equation:interior}
      \int_{N} w^2 \; dv \leq C(n) \int_{N} |\nabla w|^2 \; dv.
\end{equation}
Scaling the domain by a factor of $2\rho$, we obtain the desired estimate
\[
     \int_{N_\rho} w^2 \; dv
     \leq C(n) \rho \int_{N_\rho} |\nabla w|^2 \; dv.
\]
Similarly, by Lemma \ref{lemma:volume-comparison} and the co-area formula, we have
\begin{align*}
     \int_{\partial N} f \; da 
     &= \int_0^\infty \mbox{Vol}(\{ x \in \partial N : \, f > t \}) \; dt \\
     & \leq \int_0^\infty \mbox{Vol}(\{ x \in N : \, f = t \}) \; dt \\
     & = \int_{N} |\nabla f| \; dv.
\end{align*}
Applying this to the function $f=w^2$, we obtain
\[ 
     \int_{\partial N} w^2 \; da \leq \int_{N} | \nabla w^2| \; dv 
      \leq \frac{1}{2} \int_{N} w^2 \; dv + 2 \int_{N} |\nabla w|^2 \; dv 
      \leq C(n) \int_{N} |\nabla w|^2 \; dv,
\] 
where we have used (\ref{equation:interior}) in the last inequality.
Finally, scaling the domain by a factor of $2\rho$, we have
\[
     \int_{\partial N_\rho} w^2 \; da
     \leq C(n) \sqrt{\rho} \int_{N_\rho} |\nabla w|^2 \; dv.
\]
\end{proof}

The following estimate on the neck region will be important in the proof of Theorem \ref{theorem:gluing-boundary}.

\begin{proposition} \label{proposition:neck-estimate}
For any $\epsilon>0$, there is a small $\rho \ll r_0$, such that for any smooth function $u$ on $M_\rho$,
\begin{align*}
       \int_{\partial T_\rho \cap \partial M_\rho} u^2 \; dv 
       &\leq c\epsilon \left( \int_{N_\rho} |\nabla u|^2 \; dv 
     + \int_{\cup_{i=1}^2 \mathcal{B}_{r_0}^i(p_i ) \setminus \mathcal{B}_{\rho}^i(p_i )} u^2 \; dv \right) \\
       \int_{\cup_{i=1}^2 \mathcal{B}_{\rho}^i(p_i ) \cup T_\rho} u^2 \; dv
       &\leq c\epsilon \left( \int_{N_\rho} |\nabla u|^2 \; dv 
     + \int_{\cup_{i=1}^2 \mathcal{B}_{r_0}^i(p_i ) \setminus \mathcal{B}_{\rho}^i(p_i )} u^2 \; dv \right).
\end{align*}
\end{proposition}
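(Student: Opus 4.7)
The plan is to combine a smooth cutoff argument with the Poincar\'e-type estimates of Lemma~\ref{lemma:poincare}, which encode the essential geometric feature of the neck region $N_\rho$: its lateral boundary $\partial T_\rho\cap\partial N_\rho$ is volume-minimizing (as a portion of a catenoid for $n\ge3$, or as a pair of parallel segments for $n=2$), so the Poincar\'e constants on $N_\rho$ scale like $\rho$ in the interior and $\sqrt{\rho}$ on the boundary. First I construct a smooth cutoff $\zeta$ on $M_\rho$ which is identically $1$ on $\cup_{i=1,2}\mathcal{B}^i_\rho(p_i)\cup T_\rho$ and identically $0$ outside $N_\rho$; on each transition annulus $\mathcal{B}^i_{\rho_1}(p_i)\setminus\mathcal{B}^i_\rho(p_i)$, I take $\zeta$ to be the logarithmic cutoff of Lemma~\ref{lemma:halfball-cutoff-estimate}. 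Since $\zeta u$ then vanishes on the outer spherical portion $\partial\mathcal{B}^i_{\rho_1}(p_i)\setminus\partial M_i$ of $\partial N_\rho$, the hypothesis of Lemma~\ref{lemma:poincare} is satisfied by $w=\zeta u$, yielding
\[
\int_{N_\rho}(\zeta u)^2\,dv\le C(n)\rho\int_{N_\rho}|\nabla(\zeta u)|^2\,dv,\qquad \int_{\partial N_\rho}(\zeta u)^2\,da\le C(n)\sqrt{\rho}\int_{N_\rho}|\nabla(\zeta u)|^2\,dv.
\]
Since $\zeta\equiv 1$ on $\cup_i\mathcal{B}^i_\rho(p_i)\cup T_\rho$ and on $\partial T_\rho\cap\partial M_\rho\subset\partial N_\rho$, the two left-hand sides of the Proposition are bounded above respectively by $\int_{N_\rho}(\zeta u)^2\,dv$ and $\int_{\partial N_\rho}(\zeta u)^2\,da$.

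It therefore suffices to estimate $\int_{N_\rho}|\nabla(\zeta u)|^2\,dv$. Using $|\nabla(\zeta u)|^2\le 2\zeta^2|\nabla u|^2+2|\nabla\zeta|^2u^2$ and $0\le\zeta\le1$, the first piece is dominated by $2\int_{N_\rho}|\nabla u|^2\,dv$, which is exactly the gradient term desired on the right-hand side. Since $\nabla\zeta\equiv 0$ on $T_\rho$, the second piece reduces to a sum of integrals over $\mathcal{B}^i_{\rho_1}(p_i)\setminus\mathcal{B}^i_\rho(p_i)$; each of these is controlled by Lemma~\ref{lemma:halfball-cutoff-estimate} by a factor $c\epsilon'$ times $\int_{\mathcal{B}^i_{r_0}(p_i)\setminus\mathcal{B}^i_\rho(p_i)}u^2\,dv+\int_{\mathcal{B}^i_{r_0}(p_i)}|\nabla u|^2\,dv$, with $\epsilon'\to 0$ as $\rho\to 0$.

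Combining these ingredients and choosing $\rho$ small enough that both $C(n)\sqrt{\rho}\le\epsilon$ and $\epsilon'\le\epsilon$ yields the two inequalities of the Proposition. The main technical obstacle is that the logarithmic cutoff estimate produces a term $\int_{\mathcal{B}^i_{r_0}(p_i)}|\nabla u|^2\,dv$ over the larger half-ball rather than only over $N_\rho$; I deal with this by splitting $\mathcal{B}^i_{r_0}(p_i)=\mathcal{B}^i_{\rho_1}(p_i)\cup(\mathcal{B}^i_{r_0}(p_i)\setminus\mathcal{B}^i_{\rho_1}(p_i))$, absorbing the inner piece into the admissible $\int_{N_\rho}|\nabla u|^2\,dv$, and exploiting the joint smallness of the Poincar\'e coefficient $\sqrt{\rho}$ (from the minimizing tube geometry) with the cutoff coefficient $\epsilon'$ (from the logarithmic function) to render the outer residual negligible relative to the allowed right-hand side. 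This bookkeeping, which depends crucially on the catenoidal (respectively rectangular) shape of the neck having been built into the construction, is the delicate part of the argument.
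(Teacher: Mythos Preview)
Your argument is essentially identical to the paper's: set $w=\zeta u$ with $\zeta$ the logarithmic cutoff of Lemma~\ref{lemma:halfball-cutoff-estimate}, apply Lemma~\ref{lemma:poincare} to $w$, expand $|\nabla(\zeta u)|^2$, and control the $|\nabla\zeta|^2u^2$ term on each transition half-annulus by Lemma~\ref{lemma:halfball-cutoff-estimate}. The paper carries out exactly this chain of inequalities and says nothing more.

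Your final paragraph, however, tries to repair something that cannot be repaired in the way you describe. You are right that Lemma~\ref{lemma:halfball-cutoff-estimate} produces a gradient term $\int_{\mathcal{B}^i_{r_0}(p_i)}|\nabla u|^2$ over the full half-ball of radius $r_0$, not merely over $\mathcal{B}^i_{\rho_1}(p_i)\subset N_\rho$; the paper's own proof passes over this same point. But your proposed fix---absorbing the inner part into $\int_{N_\rho}|\nabla u|^2$ and declaring the outer residual $\int_{\mathcal{B}^i_{r_0}\setminus\mathcal{B}^i_{\rho_1}}|\nabla u|^2$ ``negligible relative to the allowed right-hand side''---cannot work for an \emph{arbitrary} smooth function $u$: the right-hand side contains only $\int_{N_\rho}|\nabla u|^2$ and $\int u^2$, and there is no inequality bounding a Dirichlet integral over a fixed region by an $L^2$ integral of the function. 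No amount of smallness in the coefficients $\sqrt\rho$ or $\epsilon'$ helps, since one can take $u$ constant on $N_\rho$ with arbitrarily large gradient on $\mathcal{B}^i_{r_0}\setminus\mathcal{B}^i_{\rho_1}$. The honest resolution is simply to state the estimate with $\int_{N_\rho\cup\bigcup_i\mathcal{B}^i_{r_0}(p_i)}|\nabla u|^2$ (or $\int_{M_\rho}|\nabla u|^2$) in place of $\int_{N_\rho}|\nabla u|^2$; this is what the proof actually yields, and it is all that is used downstream, since in the proof of Theorem~\ref{theorem:gluing-boundary} the proposition is applied only to Steklov eigenfunctions with uniformly bounded total Dirichlet energy.
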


\begin{proof}
Let $w=\zeta u$ where $\zeta$ is the smooth cut-off function from Lemma \ref{lemma:halfball-cutoff-estimate} which is 1 on $\mathcal{B}_{\rho}^1(p_1) \cup \mathcal{B}_{\rho}^2(p_2) \cup T_\rho$ and 0 on $M_\rho \setminus (\mathcal{B}_{\rho_1}^1(p_1) \cup \mathcal{B}_{\rho_1}^2(p_2))$.  By Lemma \ref{lemma:poincare},
\begin{align*}
     \int_{\partial T_\rho \cap \partial M_\rho} u^2 \; da & \leq \int_{\partial N_\rho} (\zeta u)^2 \; da \\
     & \leq \epsilon \int_{N_\rho}  |\nabla(\zeta u)|^2 \; dv \\
     & \leq  2 \epsilon \int_{N_\rho} |\nabla u|^2 \; dv 
       + 2 \epsilon \int_{\cup_{i=1}^2 \mathcal{B}_{\rho_1}^i(p_i ) \setminus \mathcal{B}_{\rho}^i(p_i )} 
                      |\nabla \zeta|^2 u^2  \; dv \\
     & \leq c\epsilon \left( \int_{N_\rho} |\nabla u|^2 \; dv 
     + \int_{\cup_{i=1}^2 \mathcal{B}_{r_0}^i(p_i ) \setminus \mathcal{B}_{\rho}^i(p_i )} u^2 \; dv \right)                
\end{align*}
where the last inequality follows from Lemma \ref{lemma:halfball-cutoff-estimate}. The proof of the second inequality is analogous.
\end{proof}

\subsection{Proof of continuity of Steklov eigenvalues under certain degenerations}
In this section we give the proof of Theorem \ref{theorem:gluing-boundary} and related results. Using Proposition \ref{proposition:neck-estimate}, which implies that for a sequence of eigenfunctions the $L^2$ norm on the boundary of $M_\rho$ doesn't concentrate on the neck as $\rho \rightarrow 0$, the proof of the gluing theorem is similar to the proof of Proposition 4.1 of \cite{FS5}.
\begin{proof}[Proof of Theorem \ref{theorem:gluing-boundary}]
We will prove the result for $s=2$, although the same argument works for gluing any number $s \geq 2$ of manifolds. Let $M_\rho$ be the Lipschitz Riemannian manifold defined in section \ref{section:gluing-construction}. First we locally smooth the corners of $M_\rho$. Specifically, there exists a bi-Lipschitz map $F: M_\rho \rightarrow \tilde{M}_\rho$, where $\tilde{M}_\rho$ is a smooth Riemannian manifold, such that $F$ and $F^{-1}$ have bounded Lipschitz constant independent of $\rho$. Note that the estimates of Lemma \ref{lemma:poincare} and Proposition \ref{proposition:neck-estimate} carry over to $\tilde{M}_\rho$ under the bi-Lipschitz equivalence, since $F$ and $F^{-1}$ have bounded Lipschitz constant independent of $\rho$. For notational simplicity we will write $M_\rho$, instead of $\tilde{M}_\rho$, for the smoothed manifold. 

Let $0=\sigma_0(M_\rho) \leq \sigma_1(M_\rho) \leq \sigma_2(M_\rho) \leq \cdots$ be the Steklov eigenvalues of $M_\rho$ and let $u_\rho^{(0)}, u_\rho^{(1)},  u_\rho^{(2)}, \ldots$ be orthonormal eigenfunctions; i.e. $\|u_\rho^{(k)}\|_{L^2(\partial M_\rho)}=1$,
\[
          \int_{\partial M_\rho} u_\rho^{(k)} \, u_\rho^{(l)} \; da =0 \qquad \mbox{ for } k \neq l
\]
and
\begin{align*}
       \begin{cases}
       \Delta u^{(k)}_\rho =0 & \mbox{ on } M_\rho \\
       \fder{u^{(k)}_\rho}{\nu}=\sigma_k(M_\rho) \, u^{(k)}_\rho & \mbox{ on } \partial M_\rho.
       \end{cases}
\end{align*}

We first show that each $\sigma_k(M_\rho)$ is bounded from above by a constant $\Lambda_k$ independent of $\rho$ for $\rho$ small. To see this we use the variational characterization of $\sigma_k$
\[ 
    \sigma_k(M_\rho)
  =\inf_E \sup \{ \frac{\int_{M_\rho}|\nabla \hat{f}|^2}{\int_{\partial M_\rho}f^2}:\ f \in E, f \neq 0\}
\]
where the infimum is taken over all $(k+1)$-dimensional subspaces $E$ of $L^2(\partial M_\rho)$, and $\hat{f}$ denotes the harmonic extension of $f$ to $M_\rho$. Thus to get an upper bound we need only exhibit $k+1$ linearly independent functions having bounded Rayleigh quotient. We can do this by choosing $k+1$ {\it fixed} such functions which are supported away from the neck region $N_\rho$ and so are valid test functions for any small $\rho$.

Since $u_\rho^{(k)}$ is a Steklov eigenfunction of $M_\rho$ with eigenvalue $\sigma_k(M_\rho)$, 
\[ 
       \int_{M_\rho} |\nabla u_\rho^{(k)}|^2 \; dv = \sigma_k(M_\rho) \int_{\partial M_\rho} (u_\rho^{(k)})^2 \; da
       = \sigma_k(M_\rho)  \leq \Lambda_k.
\] 
By Lemma \ref{lemma:L^2-bound},
\begin{equation} \label{equation:L^2-bound}
          \int_K (u_\rho^{(k)})^2 \; dv 
          \leq C \left( \int_K |\nabla u_\rho^{(k)}|^2 \; dv + \int_{\partial M_\rho \cap K} (u_\rho^{(k)})^2 \; da \right)
          \leq C(\Lambda_k+1)
\end{equation}
for any compact subset $K$ of $(M_1 \setminus \{p_1\}) \sqcup (M_2 \setminus \{p_2\})$ for all sufficiently small $\rho$, where $C=C(M_1, M_2)$. This together with Proposition \ref{proposition:neck-estimate} implies that we have a uniform bound (independent of $\rho$) on the $L^2$ norm of $u_\rho^{(k)}$ on $M_\rho$. Hence, there exists $C>0$ independent of $\rho$ such that for all sufficiently small $\rho$,
\begin{equation} \label{equation:W1,2-bound}
     \|u^{(k)}_{\rho}\|_{W^{1,2}(M_{\rho})} \leq C(k, M_1, M_2).
\end{equation}

Elliptic boundary estimates ({\cite[Theorem 6.30]{GT}}) give uniform bounds 
\[
       \|u^{(k)}_\rho \|_{C^{2,\alpha}(K)} \leq C \| u_\rho^{(k)}\|_{C^0(K)}
\]
for any compact subset $K$ of $(M_1 \setminus \{p_1\}) \sqcup (M_2 \setminus \{p_2\})$ for all sufficiently small $\rho$, where $C=C(k,\alpha,\Lambda_k, M_1, M_2)$. 
By Sobolev embedding and interpolation inequalities (\cite[Theorem 5.2]{AF}, \cite[(7.10)]{GT}), \
\[
     \|u_\rho^{(k)}\|_{C^0(K)} \leq C \left( \varepsilon\|u_\rho^{(k)}\|_{C^{2}(K)} 
                                                   + \varepsilon^{-\mu} \|u_\rho^{(k)}\|_{L^2(K)} \right)
\]
where $\epsilon >0$ can be taken arbitrarily small, $\mu>0$ depends on $n$, and $C$ depends on $M_1$, $M_2$. Hence $\|u^{(k)}_\rho \|_{C^{2,\alpha}(K)} \leq C$ with $C$ independent of $\rho$.
By the Arzela-Ascoli theorem and a diagonal argument, there exists a sequence $\rho_i \rightarrow 0$ such that for all $k$, $u^{(k)}_{\rho_i}$ converges in $C^2(K)$ on compact subsets $K \subset (M_1 \setminus \{p_1\}) \sqcup (M_2 \setminus \{p_2\})$ to a harmonic function $u^{(k)}$ on $(M_1,g_1) \sqcup (M_2,g_2)$, satisfying 
\[
       \fder{u^{(k)}}{\nu}=\sigma_k u^{(k)} 
       \quad \mbox{ on } \quad (\partial M_1 \setminus \{p_1\}) \sqcup (\partial M_2 \setminus \{p_2\}),
\]
with $\sigma_k =\lim_{i \rightarrow \infty} \sigma_k(M_{\rho_i})$. 

We now show that $u^{(k)}$ extends to a Steklov eigenfunction on $M_1 \sqcup M_2$. Consider the logarithmic cut-off function $\varphi_\rho$ that is equal to 1 on $M_j \setminus \mathcal{B}_{\sqrt{\rho}}^j(p_j)$, equal to zero on $\mathcal{B}_\rho^j(p_j)$ and is given by
\begin{equation} \label{equation:cut-off function}
 	\varphi_{\rho}(r) = \frac{\log r - \log \rho }{- \log \sqrt{\rho}} \quad \mbox{ for } \rho \le  r \le \sqrt{\rho} 
 \end{equation}
where $r$ is the radial distance from $p_j$, for $j=1,\,2$.
Then,
\begin{align}
     \int_{M_1 \sqcup M_2}  |\nabla \varphi_\rho|^2 \; dv
     &\leq \sum_{j=1}^2 \int_{\mathcal{B}^j_{\sqrt{\rho}}(p_j) \setminus \mathcal{B}^j_{\rho}(p_j)}
              |\nabla \varphi_\rho|^2  \; dv  \no \\
     &= \frac{C(n)}{(\log \sqrt{\rho})^2} 
                 \int_{\rho}^{\sqrt{\rho}} r^{n-3}  \; dr   \no \\
     &= C(n) \epsilon_n(\rho) \rightarrow 0 \mbox{ as } \rho \rightarrow 0 \label{equation:cut-off}
\end{align} 
where $\epsilon_2(\rho)=1/\log(1/\sqrt{\rho})$ and 
$\epsilon_n(\rho)=\rho^{\frac{n-2}{2}}(1-\rho^{\frac{n-2}{2}})/[(n-2)(\log\sqrt{\rho})^2]$ for $n \geq 3$.
Let $\psi \in W^{1,2} \cap L^\infty (M_1 \sqcup M_2)$ and let $\psi_\delta=\varphi_\delta \psi$. Since $u^{(k)}$ is a harmonic function on $M_1 \sqcup M_2$, satisfying 
\[
       \fder{u^{(k)}}{\nu}=\sigma_k u^{(k)} 
       \quad \mbox{ on } \quad \partial (M_1 \sqcup M_1) \setminus \{p_1,\, p_2\},
\]
and $\psi_\rho$ vanishes near $p_1, \, p_2$, we have
\begin{equation} \label{equation:weak-steklov}
    \int_{M_1 \sqcup M_2} \nabla u^{(k)} \nabla \psi_\rho \; dv
    = \sigma_k \int_{\partial (M_1 \sqcup M_2)} u^{(k)} \psi_\rho \; da.
\end{equation}
By (\ref{equation:cut-off}) and H\"older's inequality,
\[
     \int_{M_1 \sqcup M_2} \psi \nabla u^{(k)} \nabla \varphi_\rho \rightarrow 0
     \qquad \mbox{ as } \rho \rightarrow 0.
\]
Since $|\psi_\rho| \leq |\psi| \in L^\infty$ and $\psi_\rho \rightarrow \psi$ a.e., by the dominated convergence theorem, taking the limit of (\ref{equation:weak-steklov}) as $\rho \rightarrow 0$, we obtain
\[
    \int_{M_1 \sqcup M_2} \nabla u^{(k)} \nabla \psi \; dv
    = \sigma_k \int_{\partial (M_1 \sqcup M_2)} u^{(k)} \psi \; da.      
\]
Therefore, $u^{(k)}$ extends to a Steklov eigenfunction with eigenvalue $\sigma_k$ on $M_1 \sqcup M_2$.

Now observe that $\{ u^{(k)}\}_{k=1}^\infty$ are $L^2$-orthonormal on $\partial (M_1 \sqcup M_2)$. Since $\{u^{(k)}_{\rho_i}\}_{k=1}^\infty$ are $L^2$-orthonormal on $\partial M_{\rho_i}$, 
\begin{align} \label{equation:orthonormal}
      \delta_{kl} & = \lim_{i \rightarrow \infty} \int_{\partial M_{\rho_i}} u^{(k)}_{\rho_i}u^{(l)}_{\rho_i} \; da \no \\
      & =\lim_{i \rightarrow \infty} \left[\sum_{j=1}^2 
      \int_{\partial M_j \setminus \mathcal{B}^j_{\rho_i}(p_j) } 
                    u^{(k)}_{\rho_i}u^{(l)}_{\rho_i} \; da
          + \int_{\partial M_{\rho_i} \cap \partial T_{\rho_i}} u^{(k)}_{\rho_i}u^{(l)}_{\rho_i} \; da\right]  \no \\
      & = \sum_{j=1}^2 \int_{\partial M_j} \lim_{i \rightarrow \infty}
   \chi_{\partial M_j \setminus \mathcal{B}^j_{\rho_i}(p_j) } 
                   u^{(k)}_{\rho_i}u^{(l)}_{\rho_i} \; da \no \\
      & = \int_{\partial M_1 \sqcup \partial M_2} u^{(k)} u^{(l)} \; da
\end{align}
where the third equality follows from the bounded convergence theorem, and since by (\ref{equation:W1,2-bound}) and Proposition \ref{proposition:neck-estimate} we have $\lim_{i \rightarrow \infty} \|u^{(k)}_{\rho_i} \|_{L^2(\partial M_{\rho_i} \cap \partial T_{\rho_i})}=0$.
Here the domains are understood to be the corresponding domains under the bi-Lipschitz map $F$.

Finally, we show that $u^{(k)}$ is a $k$-th eigenfunction of $M_1 \sqcup M_2$; i.e. $\sigma_k=\sigma_k(M_1 \sqcup M_2)$. We prove this by induction on $k$. First, since $\sigma_0(M_\rho)=0$, we have that $\sigma_0= \lim_{\rho \rightarrow 0} \sigma_0(M_\rho)=0$, and so $\sigma_0=\sigma_0(M_1 \sqcup M_2)$. Now suppose $\sigma_l=\sigma_l(M_1 \sqcup M_2)$ for $l=1, \ldots, k-1$, where $k \geq 1$. We will show that $\sigma_k=\sigma_k(M_1 \sqcup M_2)$. It follows from (\ref{equation:orthonormal}) that $\sigma_k \geq \sigma_k(M_1 \sqcup M_2)$. It remains to show that $\sigma_k \leq \sigma_k(M_1 \sqcup M_2)$.
 
Let $w$ be a $k$-th eigenfunction of $M_1 \sqcup M_2$ with $\|w\|_{L^2(\partial (M_1 \sqcup M_2))}=1$, and let
\[
    w_\rho =\varphi_\rho w 
       - \sum_{l=1}^{k-1} \left( \int_{\partial M_\rho} (\varphi_\rho w) u_\rho^{(l)} \; da \right) u_\rho^{(l)}
\]
where $\varphi_\rho$ is the logarithmic cut-off function defined by (\ref{equation:cut-off function}). We may then use $w_\delta$ as a test function in the variational characterization of $\sigma_k(M_\rho)$. First note that
\[
    \int_{\partial M_\rho} w_\rho^2 \; da = \int_{\partial M_\rho} (\varphi_\rho w)^2 \; da  - \sum_{l=1}^{k-1} 
    \left( \int_{\partial M_\rho} (\varphi_\rho w) u_\rho^{(l)} \; da \right)^2.
\]
But 
\begin{equation} \label{equation:limit-orthogonal}
     \lim_{i \rightarrow \infty} \int_{\partial M_{\rho_i}} (\varphi_{\rho_i} w) u_{\rho_i}^{(l)} \; da
    =\int_{\partial(M_1 \sqcup M_2)} w u^{(l)} \; da =0,
\end{equation}
using an argument as in (\ref{equation:orthonormal}), where the last equality follows since $w$ is a $k$-th eigenfunction of $M_1 \sqcup M_2$. Therefore,
\[
     \lim_{i \rightarrow \infty} \int_{\partial M_{\rho_i}} w_{\rho_i}^2 \; da 
     = \lim_{i \rightarrow \infty} \int_{\partial M_{\rho_i}} (\varphi_{\rho_i} w)^2 \; da
     = \int_{\partial(M_1 \sqcup M_2)} w^2 \; da.
\]
On the other hand, 
\[
     \int_{M_\rho}  |\nabla (\varphi_\rho w) |^2 \; dv 
      \leq \int_{M_\rho} \varphi_\rho^2 |\nabla w|^2 \; dv + C \int_{M_\rho} |\nabla \varphi_\rho|^2 \; dv
      \stackrel{\rho \rightarrow 0}{\longrightarrow} \int_{M_1 \sqcup M_2} |\nabla w|^2 \; dv
\]
using (\ref{equation:cut-off}),
where the constant $C$ depends on a pointwise upper bound on $w$ and $|\nabla w|$.
Using this together with (\ref{equation:W1,2-bound}) and (\ref{equation:limit-orthogonal}) we deduce that
\[
     \lim_{i \rightarrow \infty} \int_{M_{\rho_i}} |\nabla w_\rho |^2 \; dv 
     \leq \int_{M_1 \sqcup M_2} |\nabla w|^2 \; dv. 
\]
Combining these estimates, we have 
\[
    \sigma_k = \lim_{i \rightarrow \infty} \sigma_k(M_{\rho_i}) 
     \leq \lim_{i \rightarrow \infty} 
     \frac{\int_{M_{\rho_i}} |\nabla w_{\rho_i} |^2 \; dv}{\int_{\partial M_{\rho_i}} w_{\rho_i}^2 \; da} 
    \leq \frac{\int_{M_1 \sqcup M_2} |\nabla w|^2 \; dv}{\int_{\partial (M_1 \sqcup M_2)} w^2 \; da} 
    = \sigma_k(M_1 \sqcup M_2).
\]
Therefore,
\[
     \lim_{i \rightarrow \infty} \sigma_k(M_{\rho_i})= \sigma_k(M_1 \sqcup M_2).
\]
Clearly, $\lim_{\rho \rightarrow 0} |\partial M_\rho| = |\partial (M_1 \sqcup M_2)|$. 
\end{proof}

We remark that the same argument can be used to glue a single manifold to itself along its boundary.

\begin{theorem} \label{theorem:gluing-single-boundary}
Let $M$ be an $n$-dimensional Riemannian manifold with nonempty boundary. Given any $\epsilon >0$ there exists a manifold $M_\epsilon$ obtained by gluing $M$ to itself along its boundary, along neighborhoods of distinct boundary points, such that 
\[
     \lim_{\epsilon \rightarrow 0} |\partial M_\epsilon | = |\partial M| 
     \; \mbox{ and } \; \lim_{ \epsilon \rightarrow 0} \sigma_k(M_\epsilon) = \sigma_k(M)  
\]
for $k=0, \, 1,\, 2, \ldots$.
\end{theorem}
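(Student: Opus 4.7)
The proof plan is to run the argument of Theorem \ref{theorem:gluing-boundary} essentially verbatim, with the two manifolds $M_1, M_2$ replaced by the single manifold $M$ and two distinct boundary points $p_1 \ne p_2 \in \partial M$. Choose $r_0 > 0$ small enough that the half-geodesic balls $\mathcal{B}^1_{r_0}(p_1)$ and $\mathcal{B}^2_{r_0}(p_2)$ in $M$ are disjoint and the metric is uniformly equivalent to the Euclidean metric on each. Locally flatten as in Section \ref{section:gluing-construction} to obtain $g_\rho$, and form $M_\epsilon$ by identifying $\partial M \cap \mathcal{B}^1_\rho(p_1)$ with one end of a catenoidal tube $T_\rho$ (or rectangular neck if $n=2$) and $\partial M \cap \mathcal{B}^2_\rho(p_2)$ with the other end. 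The resulting neck region $N_\rho = \mathcal{B}^1_{\rho_1}(p_1) \cup T_\rho \cup \mathcal{B}^2_{\rho_1}(p_2)$ is a Euclidean domain with the same volume-minimizing boundary structure as before, so Lemma \ref{lemma:volume-comparison}, Lemma \ref{lemma:poincare}, and Proposition \ref{proposition:neck-estimate} apply without change, since their proofs depend only on the local geometry of the gluing region and not on whether the two sides lie in one or two ambient manifolds.

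Next I would establish the uniform upper bound $\sigma_k(M_\epsilon) \le \Lambda_k$ via the variational characterization using $k+1$ fixed test functions supported away from $\{p_1, p_2\}$. For orthonormal eigenfunctions $u^{(k)}_\rho$, apply Lemma \ref{lemma:L^2-bound} on any compact subset $K \subset M \setminus \{p_1, p_2\}$ and combine with Proposition \ref{proposition:neck-estimate} to obtain a uniform $W^{1,2}(M_\rho)$ bound. Elliptic boundary regularity plus Sobolev interpolation then yields uniform $C^{2,\alpha}(K)$ bounds, and a diagonal Arzel\`a--Ascoli extraction produces a sequence $\rho_i \to 0$ and limits $u^{(k)}$ harmonic on $M \setminus \{p_1, p_2\}$ and satisfying the Steklov condition on $\partial M \setminus \{p_1, p_2\}$, with eigenvalues $\sigma_k = \lim_i \sigma_k(M_{\rho_i})$.

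To show $u^{(k)}$ extends to a genuine Steklov eigenfunction on $M$, use the logarithmic cut-off $\varphi_\rho$ from (\ref{equation:cut-off function}), now defined to vanish near both $p_1$ and $p_2$; the energy bound (\ref{equation:cut-off}) is unchanged because it is a sum of contributions over two identical local pieces. Testing against $\psi_\rho = \varphi_\rho \psi$ for arbitrary $\psi \in W^{1,2} \cap L^\infty(M)$ and passing to the limit exactly as in (\ref{equation:weak-steklov}) identifies $u^{(k)}$ as a weak Steklov eigenfunction on $M$. The orthonormality computation (\ref{equation:orthonormal}) goes through with the disjoint union replaced by $M$ and the sum over $j$ by a sum over the two distinguished points, using the bounded convergence theorem and the fact that the neck boundary contribution vanishes by Proposition \ref{proposition:neck-estimate}.

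The induction step $\sigma_k = \sigma_k(M)$ also transfers: the reverse inequality is built from a $k$-th eigenfunction $w$ of $M$ via the test function $w_\rho = \varphi_\rho w - \sum_{l<k} (\int_{\partial M_\rho} (\varphi_\rho w) u^{(l)}_\rho\, da)\, u^{(l)}_\rho$, whose Rayleigh quotient tends to that of $w$ by the same cut-off estimates. I do not anticipate a serious obstacle: every estimate in the original proof is local around the neck or uses only the ambient geometry of a fixed compact region of $M$, so nothing is sensitive to whether the two punctures lie in the same connected component. The one point to verify carefully is the extraction of a single subsequence $\rho_i \to 0$ that works simultaneously for all $k$ and for which both the orthonormality limit and the weak eigenfunction limit hold, which is the same diagonal argument used in the original proof.
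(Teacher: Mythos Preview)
Your proposal is correct and matches the paper's approach exactly: the paper simply remarks that ``the same argument can be used to glue a single manifold to itself along its boundary'' and states Theorem~\ref{theorem:gluing-single-boundary} without further proof. Your detailed verification that every step of the proof of Theorem~\ref{theorem:gluing-boundary} is local near the neck (or depends only on fixed compact geometry away from $\{p_1,p_2\}$) is precisely what justifies that remark.
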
 

Using similar methods, we obtain an analogous result showing that the first $k$ Steklov eigenvalues are continuous under certain degenerations along the interior rather than the boundary.

\begin{customthm}{\ref{theorem:gluing-interior}}
Let $M_1, \ldots, M_s$ be compact n-dimensional Riemannian manifolds with nonempty boundary. Given $\epsilon >0$ there exists a Riemannian manifold $M_\epsilon$, obtained by appropriately gluing $M_1, \ldots, M_s$ together along there interiors, such that 
$\partial M_\epsilon=\partial (M_1 \sqcup \ldots \sqcup M_s)$ and
\[
       \lim_{ \epsilon \rightarrow 0} \sigma_k (M_\epsilon) =  \sigma_k(M_1 \sqcup \cdots \sqcup M_s)
\]
for $k=0, \, 1,\, 2, \ldots$.
\end{customthm}

The proof is analogous to the proof of Theorem \ref{theorem:gluing-boundary}, yet significantly easier, since 
the delicate neck estimates of sections \ref{section:preliminaries} and \ref{section:gluing-construction} are not needed in this case. 

\begin{proof}
We will prove the result for $s=2$, although the same argument works for gluing any number $s \geq 2$ of manifolds. Let $(M_1,g_1)$ and $(M_2,g_2)$ be compact $n$-dimensional Riemannian manifolds with nonempty boundary, and let $p_1 \in \mbox{Int}\, M_1$ and $p_2 \in \mbox{Int}\, M_2$. Given $\rho >0$ sufficiently small, choose a smooth metric $g_{i,\rho}$ on $M_i$ such that $g_{i,\rho}$ is flat on the geodesics ball $\mathcal{B}^i_{\rho}(p_i)$ of radius $\rho$ in $M_i$ centered at $p_i$ and equal to $g_i$ on $M_i \setminus \mathcal{B}^i_{2\rho}(p_i)$ for $i=1,\,2$. Let $T_\rho=\mathbb{S}^{n-1}(\rho) \times \mathbb{R}$ with the standard product metric, and let $M_\rho$ be the Lipschitz Riemannian manifold obtained by gluing $M_1 \setminus \mathcal{B}_\rho^1(p_1)$ to $M_2 \setminus \mathcal{B}_\rho^2(p_2)$ using $T_\rho$, by identifying one end of $T_\rho$ with $\partial \mathcal{B}^1_{\rho}(p_1)$ and the other end of $T_\rho$ with $\partial  \mathcal{B}^2_{\rho}(p_2)$. We may then locally smooth out the corners of $M_\rho$ to obtain a smooth Riemannian manifold, which we will continue to denote by $M_\rho$.  Note that $\partial M_\rho=\partial M_1 \sqcup \partial M_2$.

Let $0=\sigma_0(M_\rho) \leq \sigma_1(M_\rho) \leq \sigma_2(M_\rho) \leq \cdots$ be the Steklov eigenvalues of $M_\rho$ and let $u_\rho^{(0)}, u_\rho^{(1)},  u_\rho^{(2)}, \ldots$ be a complete sequence of eigenfunctions that are $L^2$-orthonormal on $\partial M_\rho$, such that $u_\rho^{(k)}$ is an eigenfunction of $\sigma_k(M_\rho)$. As in the proof of Theorem \ref{theorem:gluing-interior}, $\int_{M_\rho} |\nabla u_\rho^{(k)}|^2 \; dv= \sigma_k(M_\rho) \leq \Lambda_k$, with $\Lambda_k$ independent of $\rho$. Elliptic boundary estimates, interpolation inequalities and Lemma \ref{lemma:L^2-bound}, give uniform bounds
\[
       \|u^{(k)}_\rho \|_{C^{2,\alpha}(K)} \leq C \| u_\rho^{(k)}\|_{L^2(K)} \leq C(k,\alpha,\Lambda_k, M_1, M_2)
\]
for any compact subset $K$ of $(M_1 \setminus \{p_1\}) \sqcup (M_2 \setminus \{p_2\})$. By the Arzela-Ascoli theorem, there exists a sequence $\rho_i \rightarrow 0$ such that for all $k$, $u^{(k)}_{\rho_i}$ converges in $C^2(K)$ on compact subsets $K \subset (M_1 \setminus \{p_1\}) \sqcup (M_2 \setminus \{p_2\})$ to a harmonic function $u^{(k)}$ on $(M_1 \setminus \{p_1\},g_1) \sqcup (M_2 \setminus \{p_2\},g_2)$, satisfying 
\[
       \fder{u^{(k)}}{\nu}=\sigma_k u^{(k)} 
       \quad \mbox{ on } \quad \partial M_1 \sqcup \partial M_2,
\]
with $\sigma_k =\lim_{i \rightarrow \infty} \sigma_k(M_{\rho_i})$. By an argument as in the proof of Theorem \ref{theorem:gluing-boundary}, $u^{(k)}$ extends to a harmonic function on $M_1 \sqcup M_2$, and hence to a Steklov eigenfunction with eigenvalue $\sigma_k$ on $M_1 \sqcup M_2$.

We now show that $u^{(k)}$ is a $k$-th eigenfunction of $M_1 \sqcup M_2$; i.e. $\sigma_k=\sigma_k(M_1 \sqcup M_2)$. We prove this by induction on $k$. First, since $\sigma_0(M_\rho)=0$, we have that $\sigma_0= \lim_{\rho \rightarrow 0} \sigma_0(M_\rho)=0$, and so $\sigma_0=\sigma_0(M_1 \sqcup M_2)$. Now suppose $\sigma_l=\sigma_l(M_1 \sqcup M_2)$ for $l=1, \ldots, k-1$, where $k \geq 1$. We will show that $\sigma_k=\sigma_k(M_1 \sqcup M_2)$. First observe that $\{ u^{(k)}\}_{k=1}^\infty$ are $L^2$-orthonormal on $\partial (M_1 \sqcup M_2)$, since $\{u^{(k)}_{\rho_i}\}_{k=1}^\infty$ are $L^2$-orthonormal on $\partial M_{\rho_i}=\partial (M_1 \sqcup M_2)$. It follows that $\sigma_k \geq \sigma_k(M_1 \sqcup M_2)$. The proof that $\sigma_k \leq \sigma_k(M_1 \sqcup M_2)$ follows exactly as in the proof of Theorem \ref{theorem:gluing-boundary}. Therefore, $\lim_{i \rightarrow \infty} \sigma_k(M_{\rho_i})=\sigma_k(M_1 \sqcup M_2)$.
\end{proof}

\begin{remark}
The same spectral convergence result holds for more complicated gluing constructions along the interior of manifolds. Specifically, the geometry of the neck region does not affect the spectrum in the limit. All that is needed in the proof of Theorem \ref{theorem:gluing-interior} is that as $\rho \rightarrow 0$, $M_\rho \setminus \mbox{Int} \, T_\rho$ converges to $(M_1 \setminus S_1, g_1) \sqcup (M_2 \setminus S_2, g_2)$, where $S_i \subset \mbox{Int} \, M_i$ is a set of Hausdorff dimension at most $n-2$, for $i=1, \, 2$. In this case a similar removable singularity argument shows that $u^{(k)}$ extends from a harmonic function on $M_i \setminus S_i$ to a smooth harmonic function on $M_i$, for $i=1, \, 2$. The rest of the proof carries through unchanged.
\end{remark}

The same argument can be used to glue a single manifold to itself at distinct interior points.

\begin{theorem} \label{theorem:gluing-single-interior}
Let $M$ be an $n$-dimensional Riemannian manifold with nonempty boundary. Given any $\epsilon >0$ there exists a manifold $M_\epsilon$ obtained by appropriately gluing $M$ to itself near distinct interior points, such that $\partial M_\epsilon  = \partial M$ and
\[
     \lim_{ \epsilon \rightarrow 0} \sigma_k(M_\epsilon) = \sigma_k(M)  
\]
for $k=0, \, 1,\, 2, \ldots$.
\end{theorem}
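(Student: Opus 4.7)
The plan is to mimic the proof of Theorem \ref{theorem:gluing-interior} almost verbatim, the only change being that the construction attaches a cylindrical neck to a single manifold rather than joining two. Fix two distinct interior points $p_1,p_2\in\mathrm{Int}\,M$, and for small $\rho>0$ choose $g_\rho$ on $M$ that agrees with $g$ outside $\mathcal{B}^1_{2\rho}(p_1)\cup\mathcal{B}^2_{2\rho}(p_2)$ and is flat on each $\mathcal{B}^i_\rho(p_i)$. Remove the two small balls and glue in the cylindrical tube $T_\rho=\mathbb{S}^{n-1}(\rho)\times\mathbb{R}$, identifying its two ends with $\partial\mathcal{B}^1_\rho(p_1)$ and $\partial\mathcal{B}^2_\rho(p_2)$; after locally smoothing the corners one obtains a smooth manifold $M_\epsilon$ with $\partial M_\epsilon=\partial M$. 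Note that because the gluing is purely interior, no neck-on-the-boundary analysis (Section \ref{section:preliminaries}, Section \ref{section:gluing-construction}) is needed.

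Let $u^{(k)}_\rho$ be $L^2(\partial M_\rho)$-orthonormal Steklov eigenfunctions of $M_\rho$ with eigenvalues $\sigma_k(M_\rho)$. First I would obtain the uniform upper bound $\sigma_k(M_\rho)\le\Lambda_k$ by plugging $k+1$ fixed test functions supported away from the neck region into the variational characterization, exactly as in the proof of Theorem \ref{theorem:gluing-boundary}. Combining this with Lemma \ref{lemma:L^2-bound} applied on compact subsets of $M\setminus\{p_1,p_2\}$ gives the Dirichlet energy bound $\int_{M_\rho}|\nabla u^{(k)}_\rho|^2\,dv=\sigma_k(M_\rho)\le\Lambda_k$ together with uniform $L^2$ bounds on compact subsets, and then elliptic boundary estimates with interpolation yield $\|u^{(k)}_\rho\|_{C^{2,\alpha}(K)}\le C(k,\alpha,\Lambda_k,M)$ for every compact $K\subset M\setminus\{p_1,p_2\}$.

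By Arzel\`a--Ascoli and a diagonal argument, along a subsequence $\rho_i\to 0$ each $u^{(k)}_{\rho_i}$ converges in $C^2_{\mathrm{loc}}(M\setminus\{p_1,p_2\})$ to a harmonic function $u^{(k)}$ on $M\setminus\{p_1,p_2\}$ satisfying $\partial u^{(k)}/\partial\nu=\sigma_k u^{(k)}$ on $\partial M$, where $\sigma_k=\lim_i\sigma_k(M_{\rho_i})$. To remove the singularities at $p_1,p_2$ I would test against $\psi\in W^{1,2}\cap L^\infty(M)$ multiplied by the logarithmic cut-off $\varphi_\rho$ from \eqref{equation:cut-off function}, using \eqref{equation:cut-off} and dominated convergence precisely as in the proof of Theorem \ref{theorem:gluing-boundary} to pass to the limit in the weak Steklov identity; this shows $u^{(k)}$ extends to a genuine Steklov eigenfunction on $M$ with eigenvalue $\sigma_k$. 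Because $\partial M_\rho=\partial M$ throughout the deformation, the $L^2$-orthonormality of $\{u^{(k)}_{\rho_i}\}$ on $\partial M_{\rho_i}$ passes directly to the limit (no boundary neck term to control), giving $\int_{\partial M}u^{(k)}u^{(l)}\,da=\delta_{kl}$, and hence $\sigma_k\ge\sigma_k(M)$.

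Finally I would induct on $k$, with the base case $\sigma_0=0$ immediate. For the reverse inequality, take a $k$-th eigenfunction $w$ of $M$ normalized on $\partial M$ and set
\[
w_\rho=\varphi_\rho w-\sum_{l=1}^{k-1}\Bigl(\int_{\partial M_\rho}(\varphi_\rho w)u^{(l)}_\rho\,da\Bigr)u^{(l)}_\rho,
\]
a valid test function orthogonal to the first $k$ approximate eigenfunctions. Since $\varphi_\rho\to 1$ a.e.\ and \eqref{equation:cut-off} controls $\int|\nabla\varphi_\rho|^2$, the Rayleigh quotient of $w_\rho$ on $M_\rho$ tends to $\int_M|\nabla w|^2/\int_{\partial M}w^2=\sigma_k(M)$, yielding $\sigma_k\le\sigma_k(M)$ by the min-max characterization and completing the induction. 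The main point of care, as in the proof of Theorem \ref{theorem:gluing-interior}, is the removable-singularity step at $p_1,p_2$; once that is in place, every other ingredient is strictly easier than in the boundary-gluing case because there is no boundary neck to analyze and the ambient boundary $\partial M_\epsilon=\partial M$ is fixed.
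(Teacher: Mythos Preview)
Your proposal is correct and follows exactly the approach the paper indicates: the paper does not give a separate proof of Theorem~\ref{theorem:gluing-single-interior} but simply remarks that ``the same argument can be used to glue a single manifold to itself at distinct interior points,'' and your outline carries out precisely that adaptation of the proof of Theorem~\ref{theorem:gluing-interior}. The only simplification you note---that $\partial M_\rho=\partial M$ so orthonormality passes to the limit without any neck-boundary analysis---is the right observation and matches the spirit of the paper's remark.
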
 

We close this section by mentioning an immediate application of the continuity of the first $k$ Steklov eigenvalues under certain degenerations for surfaces. Given an orientable surface $M$ of genus $\gamma$ with $m$ boundary components, let
\[
       \sigma^*_k(\gamma, m)=\sup \{ \sigma_k(M,g)L_g(\partial M)  :  g \mbox{ a smooth metric on } M \}.
\]
For any surface, there is an upper bound 
\[
      \sigma^*_k(\gamma,m) \leq 2\pi(\gamma + m + k -1)
\]
independent of the metric (\cite{K1}). However, the exact value of $\sigma^*_k(\gamma,m)$ is only known in a few cases. As discussed in section \ref{section:disk}, $\sigma^*_k(0,1)=2\pi k$ (\cite{W}, \cite{HPS}, \cite{GP1}), and is achieved by the Euclidean disk for $k=1$, but is not achieved for any $k \geq 2$ (Theorem \ref{theorem:disk}, and \cite{GP1} for $k=2$). The only other sharp upper bounds that are known are for $k=1$ for the annulus and M\"obius band. In \cite{FS4} the authors proved that $\sigma^*_1(0,2)=4\pi/T_{1,0}$ where $T_{1,0} \approx 1.2$ is the unique positive solution of $t=\coth t$, and the supremum is uniquely (up to $\sigma$-homothety) achieved by the induced metric on the critical catenoid. 

As a consequence of the gluing results of this section, we have the following lower bound for $\sigma^*_k(\gamma, m)$, as discussed in \cite[Equation (0.2)]{P2}.

\begin{corollary} \label{pet_cor}
\[
      \sigma^*_k(\gamma,m) \geq 
      \max_{\substack{k_1+ \cdots + k_s=k \\ 
                     k_j \geq 1 \, \forall j \\ 
                     \gamma_1 + \cdots + \gamma_s \leq \gamma \\
                     m_1 + \cdots m_s + \gamma_1 + \cdots + \gamma_s \leq m + \gamma \\
                     \gamma_1 < \gamma {\tiny{\mbox{ or }}} m_1 + \gamma_1 < m + \gamma \mbox{\tiny{  if }} s=1}}
      \sum_{j=1}^s \sigma^*_{k_j} (\gamma_j,m_j)
\]
\end{corollary}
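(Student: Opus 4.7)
The plan is to choose, for each admissible data $(k_j, \gamma_j, m_j)_{j=1}^s$, a smooth metric $g_j$ on a surface $M_j$ of genus $\gamma_j$ with $m_j$ boundary components such that $\sigma_{k_j}(M_j, g_j) L_{g_j}(\partial M_j) \geq \sigma^*_{k_j}(\gamma_j, m_j) - \eta$ for an arbitrary $\eta > 0$; by rescaling (using scale invariance of $\sigma_k L$) I may assume $\sigma_{k_j}(M_j, g_j) = 1$, so $L_{g_j}(\partial M_j) \geq \sigma^*_{k_j}(\gamma_j, m_j) - \eta$. Set $N := M_1 \sqcup \cdots \sqcup M_s$. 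The Steklov spectrum of $N$ is the multi-set union of the spectra of the $M_j$; since each $M_j$ contributes at most $k_j$ eigenvalues strictly less than $\sigma_{k_j}(M_j) = 1$, the total count of eigenvalues of $N$ strictly below $1$ is at most $\sum_j k_j = k$. Hence $\sigma_k(N) \geq 1$, so
\[
\sigma_k(N)\, L(\partial N) \;\geq\; L(\partial N) \;=\; \sum_{j=1}^s L_{g_j}(\partial M_j) \;\geq\; \sum_{j=1}^s \sigma^*_{k_j}(\gamma_j, m_j) - s\eta.
\]

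The main step is to assemble $N$ into a connected orientable surface of genus $\gamma$ with $m$ boundary components by iterated application of the four gluing theorems of this section. Write $\gamma_{\mathrm{tot}} = \sum \gamma_j$ and $m_{\mathrm{tot}} = \sum m_j$. There are five relevant gluing moves, each realized by one of the theorems, with the following effect on the triple (components, total genus, total boundary circles): (A) interior gluing between two distinct components, effect $(-1, 0, 0)$ via Theorem \ref{theorem:gluing-interior}; (B) interior self-gluing of a single component at distinct interior points, effect $(0, +1, 0)$ via Theorem \ref{theorem:gluing-single-interior}; (C) boundary gluing between two distinct components, effect $(-1, 0, -1)$ via Theorem \ref{theorem:gluing-boundary}; (D) boundary self-gluing at arcs on two distinct boundary circles of one component, effect $(0, +1, -1)$ via Theorem \ref{theorem:gluing-single-boundary}; (E) orientable boundary self-gluing at two arcs of a single boundary circle, effect $(0, 0, +1)$ via Theorem \ref{theorem:gluing-single-boundary}. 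With nonnegative integer counts $n_A, \ldots, n_E$, reaching the target $(1, \gamma, m)$ requires $n_A + n_C = s - 1$, $n_B + n_D = \gamma - \gamma_{\mathrm{tot}}$, and $n_E = (m - m_{\mathrm{tot}}) + n_C + n_D$. The hypotheses $\gamma_{\mathrm{tot}} \leq \gamma$ and $m_{\mathrm{tot}} + \gamma_{\mathrm{tot}} \leq m + \gamma$ in the corollary are precisely what is needed for nonnegative integer solutions: if $m_{\mathrm{tot}} \leq m$, take $(n_A, n_B, n_C, n_D, n_E) = (s-1, \gamma - \gamma_{\mathrm{tot}}, 0, 0, m - m_{\mathrm{tot}})$; otherwise take $(s-1, \gamma + m - \gamma_{\mathrm{tot}} - m_{\mathrm{tot}}, 0, m_{\mathrm{tot}} - m, 0)$. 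The additional condition for $s = 1$ simply rules out the trivial decomposition $(\gamma_1, m_1) = (\gamma, m)$, for which no gluing is performed.

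Performing the gluings in order A, C first (to consolidate into a single connected piece), then B, D, E at points disjoint from previous necks, and iterating the four theorems with a common neck parameter $\epsilon \to 0$, yields a smooth connected surface $M_\epsilon$ of genus $\gamma$ with $m$ boundary components for each small $\epsilon$, satisfying $\sigma_k(M_\epsilon) \to \sigma_k(N)$ and $L(\partial M_\epsilon) \to L(\partial N)$. Therefore
\[
\sigma^*_k(\gamma, m) \;\geq\; \limsup_{\epsilon \to 0} \sigma_k(M_\epsilon)\, L(\partial M_\epsilon) \;=\; \sigma_k(N)\, L(\partial N) \;\geq\; \sum_{j=1}^s \sigma^*_{k_j}(\gamma_j, m_j) - s\eta,
\]
and letting $\eta \to 0$ gives the bound. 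The main obstacle is the topological bookkeeping in the middle step — verifying that the five moves above exhaust the topology transitions permitted by the two constraints in the corollary, and that iterating the gluing theorems (with, if necessary, a diagonal choice of neck parameters) preserves the spectral convergence $\sigma_k(M_\epsilon) \to \sigma_k(N)$ across the finite sequence of gluings.
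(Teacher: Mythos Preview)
Your argument is correct and follows essentially the same strategy as the paper: pick near-extremal metrics rescaled so that $\sigma_{k_j}=1$, observe that the disjoint union then has $\sigma_k=1$, and assemble the pieces into a surface of the target topology by iterated applications of the gluing theorems of this section. The only substantive difference is that to \emph{increase} the number of boundary components the paper removes small disks (invoking \cite[Proposition~4.3]{FS4}), whereas you use your move~E, an orientable boundary self-gluing at two arcs on a single boundary circle via Theorem~\ref{theorem:gluing-single-boundary}; both are legitimate, and your five-move bookkeeping makes the topological accounting cleaner.
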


\begin{proof}
Suppose the maximum of the right hand side is achieved for some  $k_1, \ldots, k_s$, $\gamma_1, \ldots, \gamma_s$, and $m_1, \ldots , m_s$. Let $M_{\gamma_j, m_j}$ be a Riemannian surface of genus $\gamma_j$ with $m_j$ boundary components such that $\overline{\sigma}_{k_j}(M_{\gamma_j, m_j})$ is arbitrarily close to $\sigma^*_{k_j}(\gamma_j,m_j)$. By rescaling the metrics on the surfaces we may assume that $\sigma_{k_j}(M_{\gamma_j, m_j})=1$ for $j=1, \ldots, s$. Then $\sigma_k(M_{\gamma_1, m_1} \sqcup \cdots \sqcup M_{\gamma_s,m_s})=1$ and $\overline{\sigma}_k(M_{\gamma_1, m_1} \sqcup \cdots \sqcup M_{\gamma_s,m_s})=\sum_{j=1}^s \overline{\sigma}_{k_j}(M_{\gamma_j,m_j})$ which is arbitrarily close to $\sum_{j=1}^s \sigma^*_{k_j}(\gamma_j,m_j)$. Using Theorem \ref{theorem:gluing-interior} we glue the surfaces $M_{\gamma_1,m_1}, \ldots, M_{\gamma_s,m_s}$ together using cylindrical necks between interior points to obtain a Riemannian surface $M$ of genus $\gamma_1+ \cdots + \gamma_s$ with $m_1 + \cdots + m_s$ boundary components, and such that $\overline{\sigma}_k(M)$ is arbitrarily close to $\overline{\sigma}_k(M_{\gamma_1, m_1} \sqcup \cdots \sqcup M_{\gamma_s,m_s})$. If $m-(m_1 + \cdots + m_s)=l>0$, then using Theorem \ref{theorem:gluing-single-boundary} we glue two of the boundary components of $M$ together to reduce the number of boundary components by one and increase the genus by one, while changing the normalized eigenvalues by an arbitrarily small amount. Doing this $l$ times, we obtain a surface with $m$ boundary components and genus $\gamma_1 + \cdots + \gamma_s + l \leq \gamma$. On the other hand, if $m-(m_1 + \cdots + m_s)=l<0$, then we remove $l$ small disjoint disks from $M$ to obtain a surface with $m$ boundary components with genus $\gamma_1 + \cdots + \gamma_s \leq \gamma$, while changing the normalized eigenvalues by an arbitrarily small amount (\cite[Proposition 4.3]{FS4}). In either case, if the resulting surface has genus less than $\gamma$, then using Theorem \ref{theorem:gluing-single-interior} we glue the surface to itself between two interior points to increase the genus by one without changing the number of boundary components, while changing the normalized eigenvalues by an arbitrarily small amount. Repeating this as necessary, we obtain a Riemannian surface $M'$ of genus $\gamma$ with $m$ boundary components with $\overline{\sigma}_k(M')$ arbitrarily close to $\sum_{j=1}^s \sigma^*_{k_j}(\gamma_j,m_j)$.
\end{proof}

\section{Higher Steklov eigenvalues for the annulus and M\"obius band}

It is an open question to determine the suprema of the higher Steklov eigenvalues among all smooth metrics on the annulus and M\"obius band, and whether the suprema are achieved. For the first nonzero eigenvalue,
as discussed in Section \ref{section:rigidity}, the authors proved in \cite{FS4} that there exists a smooth metric on the annulus and on the M\"obius band that maximizes the first nonzero normalized Steklov eigenvalue, and explicitly characterized the maximizing metric as the induced metric on the critical catenoid and the critical M\"obius band, respectively. The characterization of the maximizing metrics involves a nontrivial argument showing that a metric that maximizes the first nonzero eigenvalue must be $\sigma$-homothetic to an $S^1$-invariant metric. The result then follows from an analysis of $S^1$-invariant metrics on the annulus \cite[Section 3]{FS1} and M\"obius band \cite[Proposition 7.1]{FS4}. In particular, {\em the supremum of the first nonzero eigenvalue over all metrics is the same as the supremum of the first nonzero eigenvalue among all $S^1$-invariant metrics}. One can then ask whether anything like this is true for the higher eigenvalues. \cite{FTY} and \cite{FSa} extended the analysis of $S^1$-invariant metrics to higher Steklov eigenvalues, and for each $k \geq 2$, determined the supremum of the $k$-th nonzero normalized Steklov eigenvalue among all $S^1$-invariant metrics on the annulus and the M\"obius band. Moreover, in each case, the supremum is achieved by the induced metric on an explicit free boundary annulus or M\"obius band in a Euclidean ball, except for the supremum of second normalized eigenvalue on the annulus, which is not achieved. In summary, in the case of the annulus, Fan-Tam-Yu proved:

\begin{theorem}[\cite{FTY}] \label{theorem:FTY}
Let $\sigma^{S^1}_k$ be the supremum of $k$-th normalized Steklov eigenvalue among all $S^1$-invariant metrics on the annulus. 

\vspace{1mm}

(i) $\sigma^{S^1}_2=4\pi$. Moreover, $\bar{\sigma}_2(g_T) \rightarrow 4\pi$ as $T \rightarrow \infty$, where $g_T=dt^2+d\theta^2$ on the cylinder $[0,T]\times \mathbb{S}^1$, and the supremum $4\pi$ is not achieved. 

\vspace{1mm}

(ii) $\sigma^{S^1}_{2k-1}=4k\pi/T_{1,0}$ for all $k \geq 1$, where $T_{1,0}$ is the unique positive solution of $t=\coth t$, and is achieved by the induced metric on the $k$-critical catenoid. 

\vspace{1mm}

(iii) $\sigma_{2k}^{S^1}=4k\pi \tanh (kT_{k,1})$ for $k>1$, where $T_{k,1}$ is the unique positive solution of 
$k\tanh(kt)=\coth(t)$, and is achieved by the induced metric from an explicit free boundary minimal immersion of the annulus into $\mathbb{B}^4$.
\end{theorem}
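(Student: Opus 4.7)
The plan is to use the fact that on an $S^1$-invariant annulus the Steklov problem diagonalizes under Fourier separation, so the spectrum can be written down essentially in closed form. First I would parameterize the problem: any $S^1$-invariant metric is $\sigma$-homothetic to the flat cylinder $[0,T]\times \mathbb{S}^1$ with $dt^2+d\theta^2$ and with $S^1$-invariant boundary length elements $L_0\,d\theta/(2\pi)$ at $t=0$ and $L_1\,d\theta/(2\pi)$ at $t=T$; since the boundary conformal factors are $S^1$-invariant they are constants, so the family of $S^1$-invariant metrics modulo $\sigma$-homothety is a two-parameter family, which after scaling we may take to be $(T, L_0/L_1)$ together with total boundary length.

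Second, writing $u(t,\theta)=\sum f_n(t)e^{in\theta}$ turns the Steklov problem into a single ODE per mode, $f_n''=n^2 f_n$, with boundary conditions of the form $-f_n'(0)=\sigma\,c_0 f_n(0)$ and $f_n'(T)=\sigma\,c_1 f_n(T)$ where $c_0,c_1$ are the constant boundary conformal factors. For $n=0$ this yields the eigenvalue $0$ and one positive eigenvalue $(c_0^{-1}+c_1^{-1})/T$; for each $n\geq 1$ one gets two positive eigenvalues $\sigma^{\pm}_n(T,c_0,c_1)$, each of multiplicity two (from $\cos n\theta,\sin n\theta$), as roots of a quadratic in $\sigma$ with coefficients in $\sinh(nT),\cosh(nT),c_0,c_1$. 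Arranging these in nondecreasing order gives $\sigma_k$ as an explicit but piecewise function of $(T,c_0,c_1)$.

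Third, I would maximize $\overline{\sigma}_k = \sigma_k\cdot(L_0+L_1)$ over this two-parameter family. At any interior critical point, a Lagrange-multiplier argument forces $\sigma_k$ to be a repeated eigenvalue coming from at least two distinct Fourier modes, since otherwise one could perturb the parameters to increase $\sigma_k$. These degeneracy conditions are exactly what produce the transcendental equations $t=\coth t$ (for matching of the $n=0$ and $n=1$ branches) and $k\tanh(kt)=\coth t$ (for matching branches of mode $k$ with mode $1$), whose unique positive roots are $T_{1,0}$ and $T_{k,1}$. Substituting back yields the asserted values $4k\pi/T_{1,0}$ in (ii) and $4k\pi\tanh(kT_{k,1})$ in (iii), and one recognizes the corresponding metrics as the induced metrics on the $k$-critical catenoid in $\mathbb{B}^3$ and on an explicit free boundary minimal annulus in $\mathbb{B}^4$ whose coordinate functions realize the extremal eigenfunctions. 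For part (i), the same analysis shows that the $\sigma_2$-branches from the $n=0$ mode $2/T$ and the $n=1$ mode $\tanh(T/2)$ never coincide at a configuration giving a positive normalized value bigger than the limit $4\pi$, which is the common supremum approached only as $T\to\infty$ along the symmetric flat cylinder where the annulus degenerates into two disks.

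The main obstacle is bookkeeping: as $(T,c_0,c_1)$ varies, the branches $\{\sigma^{\pm}_n\}$ cross, so identifying which branch realizes $\sigma_k$ at each parameter value — and ruling out saddle critical points and boundary (degenerate) critical points except the ones claimed — requires a careful monotonicity and crossing analysis. I would expect this combinatorial step, together with the need in (i) to prove that the non-attainment is genuine by excluding the symmetry-breaking case $c_0\neq c_1$ as a possible maximizer, to be where most of the real work lies.
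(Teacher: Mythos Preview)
This theorem is not proved in the present paper; it is quoted without proof from Fan--Tam--Yu \cite{FTY} and used only as input to Theorem~\ref{theorem:noninvariant}. Your outline is essentially the strategy of \cite{FTY}: reduce $S^1$-invariant metrics modulo $\sigma$-homothety to flat cylinders $[-T,T]\times\mathbb{S}^1$ with constant boundary weights, separate variables so that each Fourier mode $n$ contributes explicitly computable eigenvalues in terms of $\sinh(nT),\cosh(nT)$ and the weights, and then optimize the normalized eigenvalue over this finite-dimensional parameter family, where the extremality (multiplicity) conditions yield the transcendental equations defining $T_{1,0}$ and $T_{k,1}$. So there is nothing in this paper to compare against, but your plan matches the approach of the cited source; the genuine labor, as you correctly anticipate, is the branch-crossing bookkeeping needed to identify which mode realizes $\sigma_k$ at each parameter value and to rule out spurious critical points.
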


Here we use the notation $\bar{\sigma}_k(g):=\sigma_k(g) L_g(\partial M)$ for the $k$-th normalized Steklov eigenvalue of a surface $(M,g)$.
In the case of the M\"obius band, Fraser-Sargent proved:

\begin{theorem}[\cite{FSa}] \label{theorem:FSa}
Let $\sigma_k^{S^1}$ be the supremum of the $k$-th normalized Steklov eigenvalue among $S^1$-invariant metrics on the M\"obius band. For all $k \geq 1$, 
\[
      \sigma_{2k-1}^{S^1}=\sigma^{S^1}_{2k}=4\pi k \tanh(2kT_{2k,1})
\]
and the supremum is achieved by the induced metric from an explicit free boundary minimal embedding of the M\"obius band into $\mathbb{B}^4$.
\end{theorem}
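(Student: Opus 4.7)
The plan is to (i) use the $S^1$ symmetry to reduce the optimization to a one-parameter family, (ii) compute the spectrum by separation of variables, (iii) carry out the one-parameter optimization, and (iv) realize the supremum by an explicit free boundary minimal surface.

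For (i)--(ii): any $S^1$-invariant metric on the M\"obius band $M$ can be put in isothermal form $g=\lambda(t)^2(dt^2+d\theta^2)$ on $[-T,T]\times \mathbb{S}^1$ modulo the involution $\iota(t,\theta)=(-t,\theta+\pi)$. Since harmonic functions and the Dirichlet integral are conformally invariant in dimension two while the boundary length scales linearly with $\lambda|_{\partial M}$, the normalized eigenvalue $\bar\sigma=\sigma L$ depends only on $T$. Separating variables $u(t,\theta)=f(t)e^{im\theta}$, the $\iota$-invariance forces $f$ odd in $t$ for odd $m$ and even in $t$ for even $m$; the Steklov boundary condition then yields, with multiplicity two from $\{\cos m\theta,\sin m\theta\}$,
\[
\nu_m^-(T):=2\pi m\coth(mT)\ \ (m\text{ odd}),\qquad \nu_m^+(T):=2\pi m\tanh(mT)\ \ (m\text{ even}).
\]
So $\bar\sigma_{2k-1}(T)=\bar\sigma_{2k}(T)=\tau_k(T)$, the $k$-th smallest element of the multiset $\{\nu_m^\pm(T)\}_{m\geq 1}$, and it suffices to compute $\sup_{T>0}\tau_k(T)$.

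For (iii) I would first record the elementary monotonicities: $\nu_m^+\nearrow 2\pi m$ is strictly increasing in both $T$ and $m$, and $\nu_m^-\searrow 2\pi m$ is strictly decreasing in $T$ but strictly increasing in $m$ (the last via $\sinh(2mT)>2mT$). Let $T_{2k,1}$ denote the unique positive solution of $2k\tanh(2kT)=\coth(T)$ and set
\[
V_k:=\nu_1^-(T_{2k,1})=\nu_{2k}^+(T_{2k,1})=4\pi k\tanh(2kT_{2k,1})=2\pi\coth(T_{2k,1}),
\]
noting that $T_{2k,1}$ is strictly decreasing and $V_k$ strictly increasing in $k$. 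At $T=T_{2k,1}$ exactly the $k-1$ values $\nu_2^+,\ldots,\nu_{2(k-1)}^+$ lie strictly below $V_k$ while $\nu_1^-=\nu_{2k}^+=V_k$ and all other $\nu$'s are strictly larger, so $\tau_k(T_{2k,1})=V_k$. The hard part will be the global bound $\tau_k(T)\leq V_k$ for every $T>0$, equivalently $p(T):=\#\{m:\nu_m^\pm(T)\leq V_k\}\geq k$. For $T\leq T_{2k,1}$, the values $\nu_2^+,\ldots,\nu_{2k}^+$ already supply $k$ entries at most $V_k$. For $T>T_{2k,1}$, $p$ drops by $1$ at each crossing $T^*_{2j}$ where $\nu_{2j}^+$ rises through $V_k$ (these exist only for $j<k$ with $4\pi j>V_k$) and rises by $1$ at each crossing $T^{**}_{2q-1}$ where $\nu_{2q-1}^-$ falls through $V_k$ (for $q\geq 2$ with $2\pi(2q-1)<V_k$); the delicate step is to compare these finitely many crossing times via their defining equations and the identity $V_k=2\pi\coth(T_{2k,1})$, showing that by the time of any decrease event at least as many increases have already occurred. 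Combined with $\tau_k\to 2\pi k<V_k$ as $T\to\infty$, this would give $\sup_T\tau_k(T)=V_k$, attained at $T_{2k,1}$.

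For (iv): at $T=T_{2k,1}$ take $\lambda(T_{2k,1})=\coth(T_{2k,1})$; then the four eigenfunctions $\sinh(t)\cos\theta$, $\sinh(t)\sin\theta$, $\cosh(2kt)\cos(2k\theta)$, $\cosh(2kt)\sin(2k\theta)$ all descend to $M$ and share Steklov eigenvalue $1$. After rescaling so that $|\Phi|\equiv 1$ on $\partial M$, they define a conformal map $\Phi:M\to\overline{\mathbb{B}^4}$ that is an isometry on $\partial M$ and meets $\partial\mathbb{B}^4$ orthogonally; as in the proof of Theorem~\ref{theorem:disk}, $\Phi$ is then a free boundary minimal immersion of $M$ into $\mathbb{B}^4$ realizing $\bar\sigma_{2k-1}=V_k$.
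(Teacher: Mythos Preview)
The paper does not prove this theorem; it is quoted from \cite{FSa} and used only as input for Theorem~\ref{theorem:noninvariant}. So there is no proof in the present paper to compare against, and what you have written is really a sketch of the argument in \cite{FSa}.

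Your reduction (i)--(ii) is correct: conformal invariance in dimension two reduces to the flat metrics on $[-T,T]\times\mathbb{S}^1$ modulo $\iota$, the parity constraints on $f$ are exactly as you say, and the resulting normalized nonzero spectrum $\{\nu_m^-\}_{m\text{ odd}}\cup\{\nu_m^+\}_{m\text{ even}}$, each of multiplicity two, gives $\bar\sigma_{2k-1}(T)=\bar\sigma_{2k}(T)=\tau_k(T)$. Your identification of $T_{2k,1}$ as a critical conformal parameter with value $V_k=4\pi k\tanh(2kT_{2k,1})=2\pi\coth(T_{2k,1})$ is also right, as is the easy half $\tau_k(T)\leq V_k$ for $T\leq T_{2k,1}$ via the $k$ test values $\nu_2^+,\ldots,\nu_{2k}^+$.

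The genuine gap is the ``delicate step'' for $T>T_{2k,1}$, which you state but do not prove. The assertion that every down-crossing (some $\nu_{2j}^+$ rising through $V_k$) is matched in time by an earlier up-crossing (some $\nu_{2q-1}^-$ falling through $V_k$) is the entire substance of the optimization, and it requires an explicit quantitative comparison of the crossing times using the relation $V_k=2\pi\coth(T_{2k,1})$; you have asserted the conclusion rather than carried out this comparison. Without it, $\sup_T\tau_k(T)=V_k$ is not established.

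A minor point on (iv): for $k\geq 2$ the map you write down is not injective along the core circle $t=0$, since $(0,\theta)$ and $(0,\theta+\pi/k)$ have the same image. It is therefore a free boundary minimal \emph{immersion} rather than an embedding; this does not affect the eigenvalue statement (which concerns only the induced metric), but you should either justify ``embedding'' or weaken it.
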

     
It is natural to ask whether the maximizers for the higher eigenvalues among $S^1$-invariant metrics, in Theorem \ref{theorem:FTY} on the annulus and Theorem \ref{theorem:FSa} on the M\"obius band, also maximize among {\em all} metrics, as they do for the first eigenvalue when $k=1$. We show that this is not the case for the higher eigenvalues. Specifically, for $k \geq 2$, using Theorem \ref{theorem:gluing-boundary} we construct smooth metrics on the annulus and M\"obius band with $k$-th eigenvalue strictly bigger than the supremum of the $k$-th eigenvalue over $S^1$-invariant metrics. 

\begin{theorem} \label{theorem:noninvariant}
For $k \geq 2$, the supremum $\sigma^*_k$ of the $k$-th normalized Steklov eigenvalue over all smooth metrics on the annulus (or respectively, M\"obius band) is strictly bigger than the supremum $\sigma^{S^1}_k$ 
over $S^1$-invariant metrics on the annulus (or respectively, M\"obius band).
\end{theorem}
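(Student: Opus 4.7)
The plan is to apply the boundary gluing Theorem \ref{theorem:gluing-boundary} to produce, for each $k \geq 2$, a sequence of smooth metrics on the annulus (respectively the M\"obius band) whose $k$-th normalized Steklov eigenvalue converges to an explicit quantity strictly exceeding $\sigma^{S^1}_k$. The pieces to be glued are one critical catenoid $C$ (respectively critical M\"obius band $M^*$) together with $k-1$ round Euclidean unit disks, joined at distinct pairs of boundary points in a tree pattern by $k-1$ boundary gluings. Using that each boundary gluing lowers the Euler characteristic and the number of boundary components each by one, while preserving both the genus and the (non-)orientability, one checks that the resulting surface is an annulus in the first case ($\chi = 0$, two boundary components, genus zero) and a M\"obius band in the second ($\chi = 0$, one boundary component, non-orientable, inherited from $M^*$).

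First I would scale each piece so its first nonzero Steklov eigenvalue is $1$. The boundary lengths then become $L(\partial C) = 4\pi/T_{1,0}$, $L(\partial M^*) = 4\pi\tanh(2T_{2,1}) = 2\sqrt{3}\pi$ (using that $\coth(T_{2,1}) = \sqrt{3}$, which follows by substituting $\tanh(2t) = 2\tanh(t)/(1+\tanh^2(t))$ into the defining equation $2\tanh(2t) = \coth(t)$), and $L(\partial D_j) = 2\pi$. Since the disjoint union $N$ has exactly $k$ connected components and each piece contributes positive multiplicity at eigenvalue $1$, the first $k$ Steklov eigenvalues of $N$ are zero and $\sigma_k(N) = 1$. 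Theorem \ref{theorem:gluing-boundary} then yields the lower bounds
\begin{align*}
      \sigma^*_k(\text{annulus}) &\geq \frac{4\pi}{T_{1,0}} + 2\pi(k-1), \\
      \sigma^*_k(\text{M\"obius}) &\geq 2\sqrt{3}\pi + 2\pi(k-1).
\end{align*}

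I would then verify that each of these strictly exceeds $\sigma^{S^1}_k$ case by case using the explicit formulas from Theorems \ref{theorem:FTY} and \ref{theorem:FSa}. The easy cases are: the annulus with $k = 2$ (where $\sigma^{S^1}_2 = 4\pi$ and the gap $4\pi/T_{1,0} - 2\pi$ is positive since $T_{1,0} < 2$); the annulus with $k = 2j-1$ odd (gap $4\pi(j-1)(1 - 1/T_{1,0}) > 0$ using $T_{1,0} > 1$); the annulus with $k = 2j$ even, $j \geq 2$ (the lower bound exceeds $4j\pi$, which already dominates $\sigma^{S^1}_{2j} = 4j\pi\tanh(jT_{j,1}) < 4j\pi$); and the M\"obius band with $k = 2$.

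The hard part will be the M\"obius band with odd $k = 2j - 1 \geq 3$, where the identity $\sigma^{S^1}_{2j-1} = 2\pi\coth(T_{2j,1})$ reduces the problem to showing $\coth(T_{2j,1}) < 2j - 2 + \sqrt{3}$ for $j \geq 2$. To handle this I would first prove the estimate $T_{2j,1} > T_{1,0}/(2j)$ by evaluating both sides of the defining equation $2j\tanh(2jt) = \coth(t)$ at $t = T_{1,0}/(2j)$ and using $T_{1,0}\tanh(T_{1,0}) = 1$ together with $\coth(x) > 1/x$ to see the LHS is strictly below the RHS there, then invoke the strict monotonicity of the two sides in $t$. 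Since $\coth$ is decreasing this yields $\coth(T_{2j,1}) < \coth(T_{1,0}/(2j))$, and the Taylor expansion $\coth(x) = 1/x + x/3 + O(x^3)$ reduces the remaining inequality to $2j/T_{1,0} + T_{1,0}/(6j) + O(j^{-3}) < 2j - 2 + \sqrt{3}$, which is verified directly for each $j \geq 2$ using an explicit numerical bound such as $T_{1,0} < 1.2$.
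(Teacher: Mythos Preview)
Your approach is essentially the paper's: glue the critical catenoid (resp.\ critical M\"obius band) to $k-1$ unit disks along the boundary via Theorem~\ref{theorem:gluing-boundary}, read off $\bar\sigma_k$ of the disjoint union, and compare case by case with the formulas of Theorems~\ref{theorem:FTY} and~\ref{theorem:FSa}. The annulus cases and the M\"obius even case are handled identically in substance.

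Two small points. First, you omitted the M\"obius band with even $k=2j\geq 4$; it is as easy as your $k=2$ case, since $2\sqrt3\pi+2(2j-1)\pi>4j\pi>4j\pi\tanh(2jT_{2j,1})=\sigma^{S^1}_{2j}$, but it should be stated. Second, in the hard odd M\"obius case your final sentence cites ``$T_{1,0}<1.2$'', but the dominant term you must bound is $2j/T_{1,0}$, which requires a \emph{lower} bound on $T_{1,0}$ (e.g.\ $T_{1,0}>1.19$); the upper bound only controls the negligible $T_{1,0}/(6j)$ term. With the correct direction your inequality $\coth(T_{1,0}/(2j))<2j-2+\sqrt3$ does hold for all $j\geq 2$, and the $O(j^{-3})$ can be replaced by the clean bound $\coth x<1/x+x/3$ valid on the relevant range.

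For comparison, the paper handles the odd M\"obius case via a different auxiliary estimate: it shows $\coth t<1.2/t$ for $t<T_{2,1}$ (checked at $t=T_{2,1}$ and then by comparing derivatives), introduces $t_k$ solving $k\tanh(kt)=1.2/t$, observes $T_{k,1}<t_k$ and $kt_k=t_1$, and uses the numerical bound $t_1>1.36$ to obtain $\sigma^{S^1}_{2l-1}<2\pi l\cdot 1.77$, which is then compared to $2\pi(2l+\sqrt3-2)$. Your route through $T_{2j,1}>T_{1,0}/(2j)$ is a legitimate alternative and arguably more direct; the paper's route has the advantage of avoiding any Taylor remainder discussion by working with elementary comparison of $\coth t$ with $1.2/t$.
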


\begin{proof}
Fix $k \geq 2$. Let $\tilde{M}$ be the disjoint union of the critical catenoid $C$ and $k-1$ Euclidean unit disks $\mathbb{D}$. Observe that 
\[
    \sigma_0(\tilde{M})=\sigma_1(\tilde{M})=\cdots = \sigma_{k-1}(\tilde{M})=0, \quad \sigma_k(\tilde{M})=1
\]       
and 
\[
      \bar{\sigma}_k(\tilde{M})=L(\partial C)+(k-1) L(\partial \mathbb{D}) 
      =\frac{4\pi}{T_{1,0}}+2(k-1)\pi > \frac{4\pi}{1.2}+2(k-1)\pi.
\]
By Theorem \ref{theorem:gluing-boundary}, for any $\epsilon>0$, there is a smooth metric annulus $M$ obtained by gluing $C$ and $k-1$ disks $\mathbb{D}$ together, such that $|\bar{\sigma}_k(\tilde{M})-\bar{\sigma}_k(M)| < \epsilon$. We claim that $\bar{\sigma}_k(M) > \sigma^{S^1}_k$, where $\sigma^{S^1}_k$ is the supremum of the $k$-th normalized Steklov eigenvalue over all $S^1$-invariant metrics on the annulus. First note that for $k=2$, 
\[
      \bar{\sigma}_2(\tilde{M}) > \frac{4\pi}{1.2}+2\pi > 4\pi =\sigma_2^{S^1}.
\]
For $k=2l-1$ odd with $l >1$, we have
\[
     \bar{\sigma}_k(\tilde{M})> \frac{4\pi}{1.2}+2(2l-2)\pi=\frac{4\pi l +.8\pi(l-1)}{1.2}
     > \frac{4\pi l}{1.2} > \frac{4\pi l }{T_{1,0}} = \sigma_k^{S^1}.
\]
For $k=2l$ even with $l>1$, we have
\[
     \bar{\sigma}_k(\tilde{M})> \frac{4\pi}{1.2}+2(2l-1)\pi > 4l\pi > 4l\pi \tanh(lT_{k,1}) =\sigma_k^{S^1}.
\]
For each $k \geq 2$, by choosing $\epsilon>0$ sufficiently small, it follows that $\bar{\sigma}_k(M)>\sigma_k^{S^1}$.

We now consider the case of the M\"obius band. In this case, we let $\tilde{M}$ be the disjoint union of the critical M\"obius band $C$ and $k-1$ Euclidean disks $\mathbb{D}$. Observe that
\[
    \sigma_0(\tilde{M})=\sigma_1(\tilde{M})=\cdots = \sigma_{k-1}(\tilde{M})=0, \quad \sigma_k(\tilde{M})=1
\]       
and 
\[
      \bar{\sigma}_k(\tilde{M})=L(\partial C)+(k-1) L(\partial \mathbb{D}) =2\pi\sqrt{3}+2(k-1)\pi.
\]
By Theorem \ref{theorem:gluing-boundary}, for any $\epsilon>0$, there is a smooth metric M\"obius band $M$ obtained by gluing $C$ and $k-1$ disks $\mathbb{D}$ together, such that $|\bar{\sigma}_k(\tilde{M})-\bar{\sigma}_k(M)| < \epsilon$. We claim that $\bar{\sigma}_k(M) > \sigma^{S^1}_k$, where now $\sigma_k^{S^1}$ denotes the supremum of the $k$-th normalized Steklov eigenvalue over all $S^1$-invariant metrics on the M\"obius band. For $k=2l$ even, with $l \geq1$, this is clear, since
\[
     \bar{\sigma}_k(\tilde{M})=2\pi\sqrt{3}+2(2l-1)\pi > 4\pi l > 4\pi l \tanh(2l T_{2l,1}) =\sigma_k^{S^1}.
\]
For $k=2l-1$ odd, with $l >1$, we need a better approximation of $4\pi l \tanh(2l T_{2l,1})$. First observe that\[
      \frac{d}{dt} \coth t =-\frac{1}{\sinh^2 t} > -\frac{1.2}{t^2}= \frac{d}{dt} \left(\frac{1.2}{t}\right)
\]
since $\sinh t / t \geq 1$ for all $t$. Also, $T_{2,1}=\ln(2+\sqrt{3})/2$, and $\coth(T_{2,1})=\sqrt{3} < 1.2/T_{2,1}$.
It follows that $\coth t < 1.2/t$ for all $t < T_{2,1}$. Denote by $t_k$ the unique positive solution of $k\tanh (kt)=1.2/t$. Recall that $T_{k,1}$ is the unique positive solution of $k\tanh(kt)=\coth t$. Since $\coth t < 1.2/t$ for all $t < T_{2,1}$, $T_{k,1}<T_{2.1}$ for $k > 2$ (\cite[Lemma 2.3]{FTY}), and $k \tanh(kt)$ is increasing in $t$, it follows that $T_{k,1} < t_k$. Therefore, if $k >2$,
\[
      k\tanh(kT_{k,1})<k\tanh(kt_k). 
\]
By definition of $t_k$, $k\tanh(k t_k)={1.2}/{t_k}$.
Therefore $\tanh(k t_k)=1.2/(kt_k)$ and so $t_1=kt_k$. By approximation we have that $t_1>1.36$. Finally, for $l >1$,
\[
     \sigma_{2l-1}^{S^1}=4\pi l \tanh(2lT_{2l,1})<4\pi l \tanh(2lt_{2l})
     =2\pi \frac{1.2}{t_{2l}}=4\pi l\frac{1.2}{t_1} < 4\pi l\frac{1.2}{1.36} < 2\pi l \cdot (1.77).
\]
On the other hand, 
\[
      \bar{\sigma}_{2l-1}(\tilde{M})=2\pi\sqrt{3}+2(2l-2)\pi =2\pi ( 2 l + \sqrt{3} -2).
\]
If $l >1$, it is straightforward to check that $2 l + \sqrt{3} -2 > 1.77 \, l$, and so 
$\bar{\sigma}_{2l-1}(\tilde{M}) > \sigma_{2l-1}^{S^1}$. 
For each $k \geq 2$, if $\epsilon>0$ is sufficiently small, then $\bar{\sigma}_k(M)>\sigma_k^{S^1}$.
\end{proof}

\begin{remark}
As in the case of the disk, it might be reasonable to expect that maximizing metrics do not exist for higher eigenvalues on the annulus and M\"obius band, and to ask: 

\begin{enumerate}
\item[(i)]
Is the supremum of the $k$-th nonzero normalized Steklov eigenvalue among all smooth metrics on the annulus $4\pi/T_{1,0}+2(k-1)\pi$, where $T_{1,0} \approx 1.2$ is the unique positive number such that $\coth t =t$?

\item[(ii)]
Is the supremum of the $k$-th nonzero normalized Steklov eigenvalue among all smooth metrics on the M\"obius band $2\pi\sqrt{3} + 2(k-1)\pi$?
\end{enumerate}
\end{remark}
That this might be true is also suggested by results for higher eigenvalues of the Laplacian on the two-sphere and real projective plane \cite{KNPP}, \cite{K3}, \cite{P1}, \cite{N}, \cite{NS}, \cite{NP}.
     
\bibliographystyle{plain}

\end{document}